\numberwithin{equation}{section}
\newtheorem{theorem}{Theorem}[section]
\newtheorem{proposition}[theorem]{Proposition}
\newtheorem{lemma}[theorem]{Lemma}
\theoremstyle{definition}
\newtheorem{remark}[theorem]{Remark}
\newcommand{\R}{\mathbb{R}}
\begin{document}
\title
[Ground state solutions for zero-mass Chern-Simons-Schr\"{o}dinger systems]
{Generalized Chern-Simons-Schr\"{o}dinger system with critical
exponential growth: the zero-mass case}

 \author[L.\ Shen]{Liejun Shen}

\author[M.\ Squassina]{Marco Squassina}

 \address{Liejun Shen, \newline\indent Department of Mathematics, \newline\indent Zhejiang Normal University, \newline\indent
	Jinhua, Zhejiang, 321004, People's Republic of China}
\email{\href{mailto:ljshen@zjnu.edu.cn.}{ljshen@zjnu.edu.cn.}}

\address{Marco Squassina, \newline\indent
	Dipartimento di Matematica e Fisica \newline\indent
	Universit\`a Cattolica del Sacro Cuore, \newline\indent
	Via della Garzetta 48, 25133, Brescia, Italy}
\email{\href{mailto:marco.squassina@unicatt.it.}{marco.squassina@unicatt.it.}}

\subjclass[2010]{35J20,~58E50,~35B06.}
\keywords{Zero-mass, Chern-Simons-Schr\"{o}dinger system,
 Trudinger-Moser inequality, Critical exponential growth,
Ground state solution, Variational method.}

\thanks{L.J. Shen is partially supported by NSFC (12201565). M.\  Squassina  is  member  of  Gruppo  Nazionale  per
	l'Analisi  Matematica,  la Probabilita  e  le  loro  Applicazioni  (GNAMPA)  of  the  Istituto  Nazionale  di  Alta  Matematica  (INdAM)}

\begin{abstract}
We consider the existence of ground state solutions for a class of zero-mass Chern-Simons-Schr\"{o}dinger systems
\[
  \left\{
  \begin{array}{ll}
\displaystyle    -\Delta u +A_0 u+\sum\limits_{j=1}^2A_j^2 u=f(u)-a(x)|u|^{p-2}u, \\
 \displaystyle     \partial_1A_2-\partial_2A_1=-\frac{1}{2}|u|^2,~\partial_1A_1+\partial_2A_2=0,\\
 \displaystyle    \partial_1A_0=A_2|u|^2,~ \partial_2A_0=-A_1|u|^2,\\
  \end{array}
\right.
\]
where $a:\R^2\to\R^+$ is an external potential, $p\in(1,2)$
and $f\in \mathcal{C}(\R)$ denotes the nonlinearity that fulfills the
critical exponential growth in the Trudinger-Moser sense at infinity.
By introducing an improvement of
the version of Trudinger-Moser inequality approached in \cite{Shen2},
we are able to investigate the existence of positive ground state solutions for the given system using
variational method.
\end{abstract}
\maketitle

%
%

\section{Introduction and main results}

In this article, we focus on establishing the existence of positive ground state solutions for the following
generalized Chern-Simons-Schr\"{o}dinger
(\textbf{CSS} in short) system/equation with critical exponential growth
\begin{equation}\label{mainequation1}
	\left\{
	\begin{array}{ll}
		\displaystyle    -\Delta u +A_0 u+\sum\limits_{j=1}^2A_j^2 u= f(u)-a(x)|u|^{p-2}u, \\
		\displaystyle     \partial_1A_2-\partial_2A_1=-\frac{1}{2}|u|^2,~\partial_1A_1+\partial_2A_2=0,\\
		\displaystyle    \partial_1A_0=A_2|u|^2,~ \partial_2A_0=-A_1|u|^2,
	\end{array}
	\right.
\end{equation}
where $a:\R^2\to\R^+$ is an external potential, $p\in(1,2)$
and $f\in \mathcal{C}(\R)$ denotes the nonlinearity that fulfills the
critical exponential growth in the Trudinger-Moser sense at infinity which would be specified later.

Recently, great attention has been paid to
the time-dependent CSS system in two spatial dimension
\begin{equation}\label{CSS1}
	\begin{cases}
		\text{i}D_0\psi+(D_1D_1+D_2D_2)\psi+g(x,|\psi|^2)\psi=0, \\
		\partial_0A_1-\partial_1A_0=-\text{Im}(\overline{\psi} D_2\psi),\\
		\partial_0A_2-\partial_2A_0=\text{Im}(\overline{\psi} D_1\psi), \\
		\partial_1A_2-\partial_2A_1=-\frac{1}{2}|\psi|^2,\\
	\end{cases}
\end{equation}
where i stands for the imaginary unit,
$\partial_0=\frac{\partial}{\partial t}$, $\partial_1=\frac{\partial}{\partial x_1}$, $\partial_2=\frac{\partial}{\partial x_2}$
for $(t,x_1,x_2)\in\R^{1+2}$, $\psi:\R^{1+2}\to \mathbb{C}$ acts as the complex scalar field,  $A_j:\R^{1+2}\to\R$ denotes the gauge field,
$D_j=\partial_j +\text{i}A_j$ is the covariant derivative for $j=0, 1, 2$
and $g$ is the nonlinearity. In real world, it
is usually exploited to describe the non-relativistic dynamics behavior
of massive number of particles in Chern-Simons gauge fields.
This model plays an important role in
the study of high-temperature superconductors, Aharovnov-Bohm scattering, and quantum Hall effect,
we refer the reader to \cite{Jackiw1,Jackiw2,Jackiw3}. Moreover, there exist some further physical motivations
for considering CSS system \eqref{CSS1}, see
\cite{Dunne,Huh1,LS1,LS2} for example.

For all $(t,x_1,x_2)\in\R^{1+2}$ and $j=0,1,2$, one usually
  considers the situation $A_j(t,x)=A_j(x)$.
If the standing wave ansatz $\psi(t,x)=e^{\text{i}\lambda t}u(x)$ with a given $\lambda\in\R$
for $u:\R^2\to\R$, then \eqref{CSS1} reduces to
\begin{equation}\label{CSS2a}
	\left\{
	\begin{array}{ll}
		\displaystyle  -\Delta u+\lambda u+A_0 u+\sum_{j=1}^2A_j^2 u=f(x,u),  \\
		\displaystyle     \partial_1A_2-\partial_2A_1=-\frac{1}{2}|u|^2,\\
		\displaystyle    \partial_1A_0=A_2|u|^2,~ \partial_2A_0=-A_1|u|^2,
	\end{array}
	\right.
\end{equation}
where $f(x,u)=g(x,|u|^2)u$. Suppose $A_j$ satisfies the Coulomb gauge condition
$\sum_{j=0}^2\partial_jA_j= 0$, then
\eqref{CSS2a} with $\lambda\equiv0$ becomes the original CSS equation \eqref{mainequation1}, namely
\begin{equation}\label{CSS2b}
	\left\{
	\begin{array}{ll}
		\displaystyle    -\Delta u+A_0 u+\sum_{j=1}^2A_j^2 u=f(x,u),  \\
		\displaystyle    \partial_1A_0=A_2|u|^2,~ \partial_2A_0=-A_1|u|^2,\\
		\displaystyle  \partial_1A_2-\partial_2A_1=-\frac{1}{2}|u|^2,~
		\partial_1A_1+\partial_2A_2 =0.
	\end{array}
	\right.
\end{equation}
It follows from $\partial_1A_0=A_2|u|^2$ and $\partial_2A_0=-A_1|u|^2$
in \eqref{CSS2b} that
\[
\Delta A_0 =\partial_1\big(A_2|u|^2\big)-\partial_2\big(A_1|u|^2\big),
\]
leading to
\begin{equation}\label{CSS2d}
	A_0[u](x)=\frac{x_1}{2\pi |x|^2}\ast \big(A_2|u|^2\big)
	-\frac{x_2}{2\pi |x|^2}\ast \big(A_1|u|^2\big).
\end{equation}
In a similar way, we depend on
  $\partial_1A_2-\partial_2A_1=-\frac{1}{2}|u|^2$ and
$\partial_1A_1+\partial_2A_2 =0$ in \eqref{CSS2b} to derive
\[
\Delta A_1= \partial_2\bigg(\frac{|u|^2}{2}\bigg)~ \text{and}~
\Delta A_2= -\partial_1\bigg(\frac{|u|^2}{2}\bigg).
\]
From which,
the components $A_j$ for $j=1,2$ in \eqref{CSS2b} can be represented as    
\begin{equation}\label{CSS2e}
   \left\{
     \begin{array}{ll}
   \displaystyle    A_1[u](x)=\frac{x_2}{2\pi|x|^2}\ast\bigg(\frac{|u|^2}{2}\bigg)
	=-\frac{1}{4\pi }\int_{\R^2}\frac{(x_2-y_2)u^2(y)}{|x-y|^2}dy,\\
    \displaystyle      A_2[u](x) =-\frac{x_1}{2\pi|x|^2}\ast\bigg(\frac{|u|^2}{2}\bigg)
	=\frac{1}{4\pi }\int_{\R^2}\frac{(x_1-y_1)u^2(y)}{|x-y|^2}dy.
     \end{array}
   \right.
\end{equation}
In the sequel, we shall write $A_j$ in place of $A_j[u]$ for $j\in\{0,1,2\}$ for simplicity
as long as there is no misunderstanding. There are some
further properties of $A_j$ for $j\in\{0,1,2\}$ in Section \ref{Sec2} below.

Indeed, CSS system \eqref{CSS1} can reduce to a single equation
if one studies the standing wave ansatz $\psi(t,x)=e^{\text{i}\lambda t}u(x)$
with a radially symmetric $u$.
Actually,
 Byeon-Huh-Seok \cite{Byeon} considered the standing waves of type
\begin{equation}\label{CSS2}
	\begin{gathered}
		\psi(t,x)=u(|x|)e^{\text{i}\lambda t},~  A_0(t,x)=k(|x|), \hfill\\
		A_1(t,x)=\frac{x_2}{|x|^2}h(|x|),~   A_2(t,x)=-\frac{x_1}{|x|^2}h(|x|),\hfill\\
	\end{gathered}
\end{equation}
where $k$ and $h$ are real value functions depending only on $|x|$. Note that \eqref{CSS2} satisfies the Coulomb gauge condition with $\varsigma=ct+n\pi$, where $n$ is
an integer and $c$ is a real constant. To seek for solutions of CSS system \eqref{CSS1} of the type \eqref{CSS2},
it is enough to handle the
following semilinear elliptic equation
\begin{equation}\label{BHS}
	-\Delta u+ \lambda u+ \bigg(\int_{|x|}^\infty
	\frac{h(s)}{s}u^2(s)ds+\frac{h^2(|x|)}{|x|^2} \bigg)u=f(x,u)
	~\text{in}~\mathbb{R}^2,
\end{equation}
where $h(s)=\int_0^s\frac{r}{2}u^2(r)dr$. As before, we continue to assume that $\lambda\equiv0$
in Eq. \eqref{BHS}.

At this stage, there are two kinds of CSS equations,
\eqref{CSS2b} and \eqref{BHS}, which could be called by the so called zero-mass ones.
Generally, when $f(x,t)=\bar{f}(x,t)-V(x)t$ for all $(x,t)\in\R^2\times\R$
in the classic CSS equations, more and more
  interesting results have been explored by many mathematicians
over the past decades for various assumptions on $\bar{f}$ and $V$.
Speaking precisely, for $\bar{f}(x,t)=|t|^{p-2}t$ and $V\equiv1$, by
exploiting the Nehari-Poho\u{z}aev manifold argument,
Byeon \emph{et al.} \cite{Byeon} derived the existence of positive solutions for all $p>6$.
Particularly, with the prescribed mass constraint $\int_{\R^2}|u|^2dx=c^2$,
they showed some existence results
for each $c\neq0$ if $p\in(2,3]$ and sufficiently small $|c|$ if $p\in(3,4)$. Afterwards,
the existence, nonexistence and multiplicity of nontrivial solutions for \eqref{CSS2a}, or \eqref{BHS},
have been considerably contemplated by a lot of mathematicians, see \cite{AP,CZT,KT,LOZ,LYY,PR,Shen,SS,SSY,PSZZ,WT,DPS,JCZSA}
and the references therein for example even if these references are far to be exhaustive.

Next, we should turn to consider the so-called
zero-mass CSS equation.
Very recently, Zhang, Tang and Chen \cite{ZTC}
handled the following zero-mass CSS equation
\begin{equation}\label{zeromass}
-\Delta u+ \bigg(\int_{|x|}^\infty
	\frac{h(s)}{s}u^2(s)ds+\frac{h^2(|x|)}{|x|^2} \bigg)u=f(u)-\bar{a}|u|^{p-2}u
	~\text{in}~\mathbb{R}^2,
\end{equation}
where $\bar{a}>0$ is a constant, $p\in(3,4)$
and the nonlinearity $f$ admits the critical exponential growth
in the Trudinger-Moser sense at infinity. In fact, we say that
a function $f$ possesses the
\emph{critical exponential growth} at infinity if there exists a constant $\alpha_{0}>0$ such that
\begin{equation}\label{definition}
	\lim _{t \rightarrow+\infty} \frac{|f(t)|}{e^{\alpha t^{2}}}= \begin{cases}0,
		& \forall \alpha>\alpha_{0}, \\ +\infty, & \forall \alpha<\alpha_{0}.\end{cases}
\end{equation}
The above definition was introduced by Adimurthi and Yadava in \cite{AYA}, see also de Figueiredo, Miyagaki
and Ruf \cite{Figueiredo} for example.

In \cite{ZTC}, the authors depended on
the work space below
 \[
 E\triangleq \bigg\{u:u(x)~\text{is Lebesgue measurable s.t.}\int_{\R^2}|\nabla u|^2dx<+\infty
 ~\text{and}\int_{\R^2}|u|^pdx<+\infty\bigg\}
 \]
which is the completion of $C_0^\infty(\R^2)$ under the norm
 \[
 \|u\|=\sqrt{|\nabla u|_2^2+|u|_p^2},~\forall u\in E,
 \]
 where $|\cdot|_q$ denotes the usual norm corresponding to the
Lebesgue space
  $L^q(\R^2)$ for every $1\leq q\leq\infty$.
In order to treat the problem variationally, proceeding as \cite{AYA,AY,Cao,Figueiredo,DSD,Lu2012,Yang0},
they established the following version of Trudinger-Moser inequality
 \begin{proposition}\label{ZTCTM}
 Suppose that $3<p<4$, then
 $(e^{\alpha u^2}-1-\alpha u^2)\in L^1(\R^2)$ for all $\alpha>0$ and $u\in E$. Moreover,
if $u\in E$, $|\nabla u|_2^2\leq1$, $|u|_p^p\leq M>+\infty$ and $\alpha<4\pi$, then there exists a constant
$C(M,\alpha)>0$, which depends only on $M$ and $\alpha$, such that
 \begin{equation}\label{ZTCTM1}
 \int_{\R^2}(e^{\alpha u^2}-1-\alpha u^2)dx\leq C(M,\alpha).
 \end{equation}
\end{proposition}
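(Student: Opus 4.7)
The plan is to follow the standard scheme for zero-mass Trudinger--Moser inequalities of Cao, de Figueiredo--Miyagaki--Ruf and Adachi--Tanaka type that the paper already alludes to: split $\R^2$ into the ``low'' set $\{|u|\le 1\}$ and the ``high'' set $\{|u|>1\}$, control the first by a Taylor expansion together with the Gagliardo--Nirenberg inequality, and reduce the second to an Adachi--Tanaka $\R^2$-estimate applied to the truncation $w:=(|u|-1)_+$.

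The key preliminary is the plane Gagliardo--Nirenberg inequality
\[
|u|_q\le C_q\,|\nabla u|_2^{\,1-p/q}\,|u|_p^{\,p/q},\qquad q\ge p,
\]
which is available on $E$. Since $p<4$, the choice $q=4$ together with $|\nabla u|_2\le 1$ and $|u|_p^p\le M$ yields $|u|_4^4\le C(M)$. On $\{|u|\le1\}$ the pointwise bound $u^{2k}\le u^4$ for $k\ge 2$, applied in the Taylor expansion of $e^{\alpha u^2}-1-\alpha u^2$, gives
\[
\int_{\{|u|\le1\}}(e^{\alpha u^2}-1-\alpha u^2)\,dx\le (e^\alpha-1-\alpha)\int_{\R^2}u^4\,dx\le C(M,\alpha).
\]

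For the ``high'' set I would set $w:=(|u|-1)_+\in H^1(\R^2)$, which satisfies
\[
|\nabla w|_2^2=\int_{\{|u|>1\}}|\nabla u|^2\,dx\le 1,\qquad |w|_2^2\le\int_{\{|u|>1\}}|u|^2\,dx\le |u|_p^p\le M,
\]
where the last inequality uses $p>2$ together with $|u|>1$. From $|u|=w+1$ and Young,
\[
u^2\le (1+\varepsilon)\,w^2+(1+1/\varepsilon)\qquad\text{on }\{|u|>1\},
\]
so $e^{\alpha u^2}\le e^{\alpha(1+1/\varepsilon)}\,e^{\alpha(1+\varepsilon)w^2}$. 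Since $\alpha<4\pi$, I pick $\varepsilon>0$ so small that $\beta:=\alpha(1+\varepsilon)<4\pi$ and invoke the Adachi--Tanaka inequality on $\R^2$,
\[
\int_{\R^2}(e^{\beta w^2}-1)\,dx\le C(\beta)\,|w|_2^2\le C(\beta)\,M,
\]
which is applicable because $|\nabla w|_2\le 1$. Coupling this with $|\{|u|>1\}|\le|u|_p^p\le M$, writing $\int_{\{|u|>1\}}e^{\beta w^2}\,dx=\int_{\R^2}(e^{\beta w^2}-1)\,dx+|\{|u|>1\}|$, and using $e^{\alpha u^2}-1-\alpha u^2\le e^{\alpha u^2}$ then delivers the analogous bound on $\{|u|>1\}$ and hence the quantitative assertion.

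The first (qualitative) claim, unrestricted in $\alpha>0$ and $u\in E$, is obtained by the same scheme after replacing the threshold $1$ by a sufficiently large $R=R(\alpha,u)$: since $|\{|u|>R\}|\le R^{-p}|u|_p^p\to 0$, absolute continuity of the integral forces $\int_{\{|u|>R\}}|\nabla u|^2\,dx\to 0$ as $R\to\infty$, so Adachi--Tanaka still applies with subcritical constant to $w_R:=(|u|-R)_+$ for $R$ large; on $\{|u|\le R\}$ the Taylor argument remains valid via $u^{2k}\le R^{2k-4}u^4$ and $|u|_4<\infty$. The main obstacle is the ``high''-region step in the quantitative claim: one must enlarge $\alpha$ to $\alpha(1+\varepsilon)$ while staying strictly below $4\pi$, which is exactly where the subcritical hypothesis $\alpha<4\pi$ is consumed and why the constant $C(M,\alpha)$ is expected to blow up as $\alpha\uparrow 4\pi$.
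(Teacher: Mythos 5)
Your proposal is correct, but it follows a genuinely different route from the one in the paper. Note first that Proposition \ref{ZTCTM} is not actually proved in this paper: it is quoted from \cite{ZTC}, and the nearest argument written out here is the proof of Theorem \ref{maintheorem1} (the $1<p<2$ analogue), whose technique the authors explicitly expect to adapt to the $3<p<4$ case. That proof rests on Schwarz symmetrization, the radial decay estimate $|u(x)|\le C_p|x|^{-2/p}|u|_p$, the shifted truncation $v=u-u(R)$ in a large ball $B_R(0)$ fed into the classical Trudinger--Moser inequality there, and a term-by-term series estimate on $\R^2\setminus B_R(0)$. You instead decompose along the level set $\{|u|>1\}$: the low part is handled by Gagliardo--Nirenberg, where $p<4$ gives $|u|_4^4\le C|\nabla u|_2^{4-p}|u|_p^p\le C(M)$, and the high part is reduced to the Adachi--Tanaka inequality for $(|u|-1)_+$, where $p>2$ gives $|(|u|-1)_+|_2^2\le |u|_p^p\le M$ and $|\{|u|>1\}|\le M$ by Chebyshev. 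Both halves are sound, as is your threshold-$R$ normalization argument for the qualitative claim, so the proposal does prove the proposition as stated. The trade-off is worth recording: your route is more elementary in that it avoids rearrangement and ball decompositions entirely, but it consumes the strict inequality $\alpha<4\pi$ twice, once through the Young enlargement $\alpha\mapsto\alpha(1+\varepsilon)$ and once through Adachi--Tanaka, which fails at $4\pi$; the symmetrization-plus-decay scheme used for Theorem \ref{maintheorem1} is precisely what lets the paper reach the critical exponent $\alpha=4\pi$ there, which your method cannot do. For the subcritical statement \eqref{ZTCTM1} this costs nothing.
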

\noindent With the help of Proposition \ref{ZTCTM}, they concluded the existence of mountain-pass solutions for
Eq. \eqref{zeromass} with a nonlinearity $f$ involving the critical exponential growth.
Actually, to search for the nontrivial solutions, they
Actually, to look for the nontrivial solutions, they restricted
themselves in the radially symmetric subspace of $E$, namely $E_r=\{u\in E:u(x)=u((|x|)\}$.
In this situation, they immediately have the compact imbedding
$E_r\hookrightarrow L^s(\R^2)$ for all $p<s<+\infty$.

Afterwards, Shen \cite{Shen2} generalized and improved the results in \cite{ZTC}
to the case that $1<p<2$ and the nonlinearity $f$ having supercritical exponential growth.
Precisely, by contemplating
  the work space $E$ above
and introducing the
the Young function defined by
 \begin{equation}\label{Youngfunction}
\Phi_{\alpha,j_0}(t)=e^{\alpha t^2}-\sum_{j=0}^{j_0-1}\frac{\alpha^j}{j!}|t|^{2j},~\forall t\in\R,
 \end{equation}
where
 $\alpha>0$ appearing in \eqref{definition} and $j_0\triangleq \inf\{j\in \mathbb{N}^+:2j\geq p^*\}$ with
$p^*=\frac{2p}{2-p}>2$,
Shen \cite{Shen2} firstly established the Trudinger-Moser inequality below
 \begin{proposition}\label{ShenTM}
 Suppose that $1<p<2$, then
 $\Phi_{\alpha,j_0}(u)\in L^1(\R^2)$ for all $\alpha>0$ and $u\in E$. Moreover
 \begin{equation}\label{ShenTM1}
\mathbb{S}(\alpha)\triangleq \sup_{u\in E,\|u\|\leq1}\int_{\R^2}\Phi_{\alpha,j_0}(u) dx<+\infty
 \end{equation}
for all $0<\alpha<4\pi$. Finally, if $\alpha>4\pi$, then $\mathbb{S}(\alpha)=+\infty$.
\end{proposition}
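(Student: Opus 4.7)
The plan is to dispatch three claims in turn: (i) $\Phi_{\alpha,j_0}(u)\in L^1(\R^2)$ for every $\alpha>0$ and $u\in E$; (ii) the uniform estimate $\mathbb{S}(\alpha)<+\infty$ whenever $0<\alpha<4\pi$; (iii) the sharpness $\mathbb{S}(\alpha)=+\infty$ when $\alpha>4\pi$. The backbone of (i) and (ii) is the Gagliardo-Nirenberg interpolation in $\R^2$, which by scaling gives
\[
|u|_q^q\leq C_q|\nabla u|_2^{q-p}|u|_p^{p},\qquad \forall u\in E,\ q\geq p,
\]
so that $E\hookrightarrow L^q(\R^2)$ continuously for all $p\leq q<+\infty$. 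Inserted into the Taylor expansion $\Phi_{\alpha,j_0}(u)=\sum_{j\geq j_0}\frac{\alpha^j}{j!}|u|^{2j}$, this yields (i) at once for any individual $u\in E$, since each term is finite and the resulting tail is convergent.

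For (ii) I would split $\R^2$ into $\{|u|\leq 1\}$ and $\{|u|>1\}$. On the low sublevel set the condition $2j_0\geq p^{*}$ forces $|u|^{2j}\leq |u|^{p^{*}}$ for every $j\geq j_0$, hence $\Phi_{\alpha,j_0}(u)\leq e^{\alpha}|u|^{p^{*}}$, and the Gagliardo-Nirenberg bound above with $\|u\|\leq 1$ uniformly controls $|u|_{p^{*}}^{p^{*}}$. On the high sublevel set I would introduce the truncation $v\triangleq(|u|-1)^{+}$, which lies in $H^1(\R^2)$, has support of measure at most $|u|_p^p\leq 1$ by Chebyshev, and satisfies $|\nabla v|_2\leq|\nabla u|_2\leq 1$; a second application of Gagliardo-Nirenberg controls $|v|_2$ uniformly. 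The elementary pointwise bound $u^2\leq(1+\varepsilon)v^2+C_\varepsilon$ valid on $\{|u|>1\}$ then gives
\[
\int_{\{|u|>1\}}\Phi_{\alpha,j_0}(u)\,dx\leq \int_{\{|u|>1\}}e^{\alpha u^2}\,dx\leq e^{\alpha C_\varepsilon}\int_{\R^2}e^{\alpha(1+\varepsilon)v^2}\,dx.
\]
Choosing $\varepsilon>0$ small enough so that $\alpha(1+\varepsilon)<4\pi$, the classical Trudinger-Moser inequality on $\R^2$ in the Cao/Adachi-Tanaka form applied to $v$ absorbs the right-hand side into a universal constant, completing (ii).

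For (iii), I would test against the classical Moser sequence
\[
m_n(x)=\frac{1}{\sqrt{2\pi}}\begin{cases} (\log n)^{1/2}, & |x|\leq 1/n,\\ (\log n)^{-1/2}\log(1/|x|), & 1/n<|x|\leq 1,\\ 0, & |x|\geq 1, \end{cases}
\]
for which standard computations give $|\nabla m_n|_2^2=1$ and $|m_n|_p\to 0$, so that $\|m_n\|\to 1$. Normalizing $\tilde m_n\triangleq m_n/\|m_n\|$ and restricting the integral to the central disk $|x|\leq 1/n$, the exponential piece $n^{\alpha/(2\pi\|m_n\|^{2})}$ dominates the polynomial correction $C(\log n)^{j_0-1}$ produced by $\sum_{j<j_0}\frac{\alpha^j}{j!}|\tilde m_n|^{2j}$, whence
\[
\int_{\R^2}\Phi_{\alpha,j_0}(\tilde m_n)\,dx\geq \pi\,n^{\alpha/(2\pi\|m_n\|^2)-2}(1+o(1))\longrightarrow+\infty
\]
as soon as $\alpha>4\pi\|m_n\|^{2}$, which holds for $n$ large whenever $\alpha>4\pi$.

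The main obstacle is claim (ii): the Trudinger-Moser machinery naturally lives in the $L^2$-mass setting, whereas the ambient mass here is $L^p$ with $p<2$, below $L^2$. Bridging this mismatch via the truncation $v=(|u|-1)^{+}$ together with Gagliardo-Nirenberg, in order to fall back onto the Adachi-Tanaka inequality at a slightly enlarged exponent $\alpha(1+\varepsilon)$, is the crucial device.
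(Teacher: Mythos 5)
Your parts (ii) and (iii) are correct, and part (ii) follows a genuinely different route from the paper. The paper (in its proof of the stronger Theorem \ref{maintheorem1}) first Schwarz-symmetrizes, then on a large ball $B_R(0)$ replaces $u$ by $v=u-u(R)\in H^1_0(B_R(0))$ and uses the radial decay estimate $|u(R)|\leq C_pR^{-2/p}|u|_p$ to absorb the shift $u(R)$ \emph{exactly} into the constraint $\|u\|\leq 1$, so that the rescaled function $w=\sqrt{1+u^2(R)}\,v$ still satisfies $|\nabla w|_2^2\leq 1$ and the classical Trudinger--Moser inequality applies on the ball without any loss in the exponent; the exterior integral is summed term by term via the decay $|u(x)|\leq \bar C_p|x|^{-(2-p)/p}|u|_{p^*}$. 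This is what lets the paper reach the critical value $\alpha=4\pi$. Your sublevel-set splitting with $v=(|u|-1)^{+}$, Chebyshev for the measure of $\{|u|>1\}$, and the Cao/Adachi--Tanaka inequality at the enlarged exponent $\alpha(1+\varepsilon)$ is cleaner (no symmetrization) and perfectly adequate for the subcritical range $\alpha<4\pi$ claimed in the proposition, but the $(1+\varepsilon)$ loss inherent in $u^2\leq(1+\varepsilon)v^2+C_\varepsilon$ is precisely why this method cannot reach $\alpha=4\pi$. Your Moser-sequence computation for (iii) is the standard one and is fine.

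There is, however, a genuine gap in your part (i). You claim that inserting the Gagliardo--Nirenberg bound $|u|_{2j}^{2j}\leq \mathcal{C}_{2j}|\nabla u|_2^{2j-p}|u|_p^p$ into the series $\Phi_{\alpha,j_0}(u)=\sum_{j\geq j_0}\frac{\alpha^j}{j!}|u|^{2j}$ gives a convergent tail ``at once'' for every $u\in E$ and every $\alpha>0$. It does not: the sharp constants grow like $\mathcal{C}_q\sim C^qq^{q/2}$, so the general term behaves like $(c\,\alpha\,|\nabla u|_2^2)^j$ up to subexponential factors, and the series converges only when $\alpha|\nabla u|_2^2$ is small. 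This is exactly the reason the classical $H^1$ statement ``$e^{\alpha u^2}-1\in L^1$ for all $\alpha>0$ and all $u$'' cannot be proved by naive term-by-term interpolation. The fix is available with your own tools: truncate at a large level $M$ instead of $1$, note that $|\nabla(|u|-M)^{+}|_2^2=\int_{\{|u|>M\}}|\nabla u|^2\,dx\to 0$ as $M\to\infty$ by absolute continuity of the integral, choose $M$ so large that $\alpha(1+\varepsilon)\,|\nabla(|u|-M)^{+}|_2^2<4\pi$, and then run the same Adachi--Tanaka argument on $\{|u|>M\}$ while bounding $\Phi_{\alpha,j_0}(u)\leq M^{-p^*}e^{\alpha M^2}|u|^{p^*}$ on $\{|u|\leq M\}$. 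As written, though, this step of your proposal is not a proof.
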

\noindent Then, combining the minimax procedure and elliptic regular theory,
Shen investigated the existence of a nontrivial solution
with the mountain-pass energy in \cite{Shen2}, where the subspace $E_r$ was still considered.

Motivated by all of the quoted papers above, particular by \cite{ZTC,Shen2},
it is quite natural to ask some interesting questions. For example,

(I) As pointed out in \cite{Shen2}, either \eqref{ZTCTM1} or \eqref{ShenTM1} is
a subcritical Trudinger-Moser type inequality in the whole space $\R^2$,
namely there is no information when
$\alpha$ is exactly equal to $4\pi$.
Thereby, can we given an affirmative answer that whether $\mathbb{S}(4\pi)<+\infty$ or $\mathbb{S}(4\pi)=+\infty$.

(II) Owing to the compact imbedding
$E_r\hookrightarrow L^s(\R^2)$ for each $p<s<+\infty$, although the nonlinearity $f$ possesses the (super)critical exponential growth
in \cite{ZTC,Shen2}, it is simple to recover the compactness to some extent.
Hence, can we find nontrivial solutions for zero-mass CSS equation \eqref{zeromass}.
In other words, whether the existence results in \cite{ZTC,Shen2} remain true for Eq. \eqref{CSS2b}
with $f(x,t)=\bar{a}|t|^{p-2}t$ for all $x\in \R^2$ and $t\in\R$, where $p\in(1,2)$, or $p\in(3,4)$.

(III) The reader is invited to observe that Eq. \eqref{zeromass} is an autonomous one
because $\bar{a}>0$ is just a constant. Thus, can we improve this constant to a general potential function.
Moreover, if it was true, whether the obtained nontrivial solution is indeed a ground state solution.

As a consequence, we shall try our best to introduce some new analytic tricks and then
contemplate the above Questions.

First of all, we focus on the Question (I).
Let us continue to use the Young function
$\Phi_{\alpha,j_0}$ defined in \eqref{Youngfunction},
we shall prove the following result.

 \begin{theorem}\label{maintheorem1}
 Suppose that $1<p<2$, then
 $\Phi_{\alpha,j_0}(u)\in L^1(\R^2)$ for all $\alpha>0$ and $u\in E$. Moreover
 \begin{equation}\label{maintheorem1a}
\mathbb{S}(\alpha)\triangleq \sup_{u\in E,\|u\|\leq1}\int_{\R^2}\Phi_{\alpha,j_0}(u) dx<+\infty
 \end{equation}
for all $\alpha\in(0,4\pi]$. Moreover, $\mathbb{S}(\alpha)=+\infty$
 if $\alpha>4\pi$.
\end{theorem}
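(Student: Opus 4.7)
Since Proposition~\ref{ShenTM} already furnishes the $L^1$-integrability of $\Phi_{\alpha,j_0}(u)$ for every $\alpha>0$, $u\in E$, and the divergence $\mathbb{S}(\alpha)=+\infty$ for $\alpha>4\pi$, the only genuinely new content of the theorem is the endpoint bound $\mathbb{S}(4\pi)<+\infty$. The approach I would take is to import the Ruf truncation trick from the critical Trudinger--Moser inequality on $H^1(\R^2)$, recalibrated so that the subcritical $L^p$-part of the norm $\|u\|^2=|\nabla u|_2^2+|u|_p^2$ plays the role that the $L^2$-part plays in Ruf's original argument.

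First I would symmetrize: by Schwarz rearrangement, $\int\Phi_{4\pi,j_0}(u)=\int\Phi_{4\pi,j_0}(u^{\ast})$, $|\nabla u^{\ast}|_2\leq|\nabla u|_2$, and $|u^{\ast}|_p=|u|_p$, so it suffices to handle $u\geq 0$ radial and non-increasing with $\eta:=|u|_p^2\in[0,1)$ and $|\nabla u|_2^2\leq 1-\eta$. Set $c:=\sqrt{\eta}$. If $u(0)\leq c$, then $u\leq 1$ everywhere, and the inequality $\Phi_{4\pi,j_0}(t)\leq e^{4\pi}t^{2j_0}$ valid on $|t|\leq 1$, combined with the Gagliardo--Nirenberg estimate
\[
|u|_{2j_0}^{2j_0}\leq C|\nabla u|_2^{2j_0-p}|u|_p^p
\]
(applicable because $2j_0\geq p^{\ast}$), gives an $u$-independent bound. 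Otherwise let $R>0$ be defined by $u(R)=c$; a radial Chebyshev inequality forces $\pi R^2\leq c^{-p}|u|_p^p=1$, so $R^2\leq1/\pi$, and the integral on $\R^2\setminus B_R$ is controlled exactly as above since $u\leq c\leq 1$ there.

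For the inner piece I would invoke the elementary pointwise inequality
\[
u^2\leq(1+\delta)(u-c)^2+(1+1/\delta)\,c^2,\qquad \forall\,u,c\geq 0,\ \delta>0,
\]
(a quadratic in $u$ whose discriminant vanishes) to deduce $e^{4\pi u^2}\leq e^{4\pi(1+1/\delta)c^2}\,e^{4\pi(1+\delta)(u-c)^2}$. The key calibration I propose is $\delta:=\eta/(1-\eta)$, which simultaneously forces $(1+1/\delta)c^2=1$ and $(1+\delta)(1-\eta)=1$. With this choice $v:=(u-c)^+$ belongs to $H^1_0(B_R)$ and the rescaled test function $w:=\sqrt{1+\delta}\,v$ satisfies $|\nabla w|_2\leq 1$, so Moser's original inequality on the bounded ball $B_R$ produces $\int_{B_R}e^{4\pi w^2}\,dx\leq C|B_R|\leq C/\pi$ with $C$ universal. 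Combined with $\Phi_{4\pi,j_0}(u)\leq e^{4\pi u^2}$, this bounds the inner contribution by $e^{4\pi}C/\pi$ uniformly in $u$, and summing the inner and outer pieces proves $\mathbb{S}(4\pi)<+\infty$.

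The hard part will be precisely this joint calibration in the truncation step: the Ruf parameter $\delta$ and the cut-off level $c$ must be tuned as functions of $|u|_p$ so that the constant factor $(1+1/\delta)c^2$ stays bounded, the renormalized Moser--Trudinger exponent $(1+\delta)|\nabla v|_2^2$ stays $\leq 1$, and the truncation radius $R$ is controlled a priori. The joint choice $c=|u|_p$, $\delta=|u|_p^2/(1-|u|_p^2)$ turns all three conditions into equalities, and this is the structural point at which the weaker $L^p$-factor of the norm on $E$ must substitute for the $L^2$-factor that drives Ruf's $H^1(\R^2)$ argument; the rest of the proof should then consist of routine bookkeeping and an appeal to Proposition~\ref{ShenTM} for the divergence assertion when $\alpha>4\pi$.
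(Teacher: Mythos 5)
Your argument is correct, and it establishes the only genuinely new point, namely $\mathbb{S}(4\pi)<+\infty$; but it implements the key truncation step differently from the paper. Both proofs symmetrize and then run a Ruf-type scheme: split $\R^2$ into a ball and its complement, apply the classical Moser inequality to a rescaled truncation of $u$ on the ball, and control the tail using the smallness of $u$ there. The paper, however, fixes the radius $R$ once and for all (large enough that $1-C_p^2R^{-4/p}\geq 0$), truncates at the height $u(R)$, and controls that height via the Berestycki--Lions radial decay $|u(x)|\leq C_p|x|^{-2/p}|u|_p$; the loss in the Peter--Paul step, of size $u^2(R)\leq C_p^2R^{-4/p}|u|_p^2$, is then absorbed by the slack $1-|u|_p^2$ left in the Dirichlet energy. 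You instead truncate at the level $c=|u|_p$, bound the corresponding radius by Chebyshev ($\pi R^2\leq 1$), and calibrate $\delta=|u|_p^2/(1-|u|_p^2)$ so that the prefactor $(1+1/\delta)c^2$ and the renormalized Dirichlet norm both equal $1$ exactly; this is closer to Ruf's original $H^1(\R^2)$ argument and gives the clean constant $e^{4\pi}$ for the inner piece. The tail estimates also differ: the paper expands $\Phi_{4\pi,j_0}$ as a series and invokes the $L^{p^*}$ decay $|u(x)|\leq \bar C_p|x|^{-(2-p)/p}|u|_{p^*}$, whereas you use $u\leq 1$ off $B_R$ together with $\Phi_{4\pi,j_0}(t)\leq e^{4\pi}t^{2j_0}$ and the Gagliardo--Nirenberg inequality; both are valid, and yours arguably requires one less decay lemma. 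The only points to make explicit in a write-up are the existence of the level radius with $u(R)=c$ (radial functions with square-integrable gradient are continuous away from the origin, so this follows once the case $u(0)\leq c$ is set aside as you do) and the trivial case $u=0$; the $L^1$ statement and the divergence for $\alpha>4\pi$ are indeed inherited from Proposition \ref{ShenTM}, exactly as the paper itself does.
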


\begin{remark}
Due to Theorem \ref{maintheorem1}, we can make sure that $\mathbb{S}(4\pi)<+\infty$,
and so it solves the Question (I) completely. Moreover, we do believe that the technique for the proof of
Theorem \ref{maintheorem1} can be also adapted to Proposition \ref{ZTCTM}.
\end{remark}

Next, in order to solve Questions (II)
and (III), we are ready to introduce some technical assumptions on the potential $a:\R^2\to\R$
and the nonlinearity $f:\R\to\R$ in Eq. \eqref{mainequation1}
as follows.
\begin{itemize}
  \item[$(A_1)$] $a\in \mathcal{C}^0(\R^2)$ with $\inf\limits_{x\in\R^2}a(x)>0$;
  \item[$(A_2)$] for almost every $x\in\R^2$, $a(x)\leq \lim\limits_{|x|\to\infty}a(x)\triangleq a_\infty<+\infty$ and this inequality is
strict in a subset of positive Lebesgue measure
\end{itemize}
Concerning the nonlinearity $f$, we suppose that
\begin{itemize}
	\item[$(f_1)$] $f \in \mathcal{C}(\mathbb{R})$ with $f(t)\equiv0$ for all $t\in(-\infty,0]$ and $f(t)=o(t)$ as $t\to0^+$;
	\item[$(f_2)$] the map $t\mapsto f(t)/t^5$ is strictly increasing on $t\in(0,+\infty)$;
    \item[$(f_3)$] there exist constants $t_0 > 0$, $M_0 > 0$ and $\vartheta \in[0, 1)$ such that
$$
0<t^{\vartheta}F(t)\leq M_0f(t),~\forall t>t_0,
$$
where and in the sequel $F(t)=\int_0^tf(s)ds$ for all $t>0$;
    \item[$(f_4)$] $\lim\limits_{t\to+\infty}F(t)e^{-\alpha_0t^2}\triangleq\beta_0>0$, where $\alpha_0>0$ comes from \eqref{definition}.
\end{itemize}

We are now in a position to state the second main result in this article.
 \begin{theorem}\label{maintheorem2}
 Let $1<p<2$ and suppose $(A_1)-(A_2)$. If $f$ satisfies \eqref{definition} and $(f_1)-(f_4)$, then
Eq. \eqref{mainequation1} admits at least a positive ground state solution in $E$.
\end{theorem}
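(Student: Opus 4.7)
The plan is to recast \eqref{mainequation1} as a scalar nonlocal equation by substituting the explicit representations \eqref{CSS2d}--\eqref{CSS2e} of $A_0,A_1,A_2$ into the first equation, and then to seek critical points of the energy
\[
I(u)=\frac{1}{2}\int_{\R^2}|\nabla u|^2\,dx+\frac{1}{2}\int_{\R^2}\bigl(A_1[u]^2+A_2[u]^2\bigr)u^2\,dx+\frac{1}{p}\int_{\R^2}a(x)|u|^p\,dx-\int_{\R^2}F(u)\,dx
\]
on the Banach space $E$. That $I\in\mathcal{C}^1(E,\R)$ follows from Theorem \ref{maintheorem1} (which handles the critical exponential part $\int F(u)$) together with standard pointwise and integral estimates for $A_j[u]$ showing that the nonlocal term is sextic in $u$.

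First I verify the mountain-pass geometry: near the origin $(f_1)$, combined with Theorem \ref{maintheorem1} applied for some $\alpha<4\pi$ and the embedding $E\hookrightarrow L^s(\R^2)$ for $s\geq p$, gives $I(u)\geq c\|u\|^2$ for $\|u\|$ small, since the nonlocal and subquadratic penalty contributions are of higher order. To produce $e\in E$ with $I(e)<0$, I fix $\varphi\in C_c^\infty(\R^2)$ with $\varphi\geq 0$, $\varphi\not\equiv 0$, and exploit $(f_4)$, which makes $-\int F(t\varphi)$ grow faster than any polynomial in $t$ along the ray $t\mapsto t\varphi$, overwhelming the kinetic, sextic and $L^p$-penalty contributions. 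This yields a mountain-pass value $c_{\rm mp}>0$.

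The decisive step is an upper estimate on $c_{\rm mp}$ placing it strictly below the Trudinger--Moser threshold supplied by Theorem \ref{maintheorem1}. My plan is to insert Moser-type test functions $\omega_n$, suitably truncated and renormalized in $E$, into $I$, to maximize $t\mapsto I(t\omega_n)$, and to invoke the sharp asymptotic $F(t)\sim\beta_0 e^{\alpha_0 t^2}$ furnished by $(f_4)$; the hypothesis $\beta_0>0$ is exactly what closes the computation. Once this strict gap is available, the weak Ambrosetti--Rabinowitz-type estimate $(f_3)$ (with defect $\vartheta\in[0,1)$), combined with the sign of the subquadratic penalty $a(x)|u|^p/p$, produces a bounded Cerami sequence $\{u_n\}\subset E$ at level $c_{\rm mp}$, and Theorem \ref{maintheorem1} then lets $f(u_n)$ pass to the limit when tested against any function in $C_c^\infty(\R^2)$.

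The main obstacle is global compactness: $E\hookrightarrow L^s(\R^2)$ for $p<s<\infty$ is not compact and, since \eqref{mainequation1} is not autonomous, I cannot restrict to the radial subspace as in \cite{ZTC,Shen2}. My strategy is a comparison with the \emph{problem at infinity} obtained by replacing $a(x)$ with $a_\infty$ in $I$: its mountain-pass level $c_\infty$ is attained (by translation invariance and standard concentration--compactness applied to a radial maximizer), and the strict inequality in $(A_2)$ forces $c_{\rm mp}<c_\infty$, because replacing $a_\infty$ by the strictly smaller $a(x)$ on a positive-measure set strictly decreases the energy along any admissible profile. Therefore, if the weak limit $u$ of $\{u_n\}$ vanished, a translation-splitting argument would push the missing mass to infinity and recover energy at least $c_\infty>c_{\rm mp}$, a contradiction. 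Hence $u\not\equiv 0$ is a weak solution; positivity follows from $(f_1)$ (which makes $f(u)\ge 0$) and the strong maximum principle after elliptic regularity, while the ground-state property is obtained by identifying $c_{\rm mp}$ with $\inf_{\mathcal{N}} I$ on the Nehari manifold $\mathcal{N}=\{u\in E\setminus\{0\}:\langle I'(u),u\rangle=0\}$, every fibering $t\mapsto I(tu)$ admitting a unique positive maximum thanks to the strict monotonicity $(f_2)$, which exactly matches the sextic scaling of the nonlocal term.
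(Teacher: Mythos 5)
Your proposal follows essentially the same route as the paper: first solve the autonomous limit problem with $a_\infty$, establish the strict level comparison $c_a<m_\infty$ from $(A_2)$ together with the attained, positive ground state of the limit problem, use that gap to rule out vanishing of the weak limit of the Cerami sequence, and identify the mountain-pass level with the Nehari ground-state level via the fibering property coming from $(f_2)$ and the sextic scaling of the gauge term. The only slips are minor: boundedness of the Cerami sequence comes from $(f_2)$ (which yields $f(t)t\ge 6F(t)$ in the combination $J_a(u_n)-\tfrac16 J_a'(u_n)[u_n]$) rather than from $(f_3)$, and the concentration-compactness step in the space $E$ requires the paper's Gagliardo--Nirenberg-based vanishing lemma (Theorem \ref{Vanishing}) in place of the classical Lions lemma.
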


\begin{remark}
It is obvious that Questions (II)
and (III) are uncovered by Theorem \ref{maintheorem2}
which in turn indicates that our results improve and replenish the counterparts in
\cite{ZTC,Shen2}. It should be mentioned here that both
the assumptions on the potential $a$ and the nonlinearity $f$
are standard. On the one hand, the function $a$ equipping with $(A_1)-(A_2)$ is usually called by the well-known Rabinowitz's type
potential introduced in \cite{Rabinowitz} and it was later
exploited by Wan and Tan in \cite{WT}. On the other hand,
as to the function $f$ having critical exponential growth and satisfying
 $(f_1)-(f_4)$, we prefer to refer the reader to \cite{PSZZ,SS,ZTC} and their references therein.
\end{remark}

Finally, we shall exhibit the main idea for the proof of Theorem \ref{maintheorem2}.
The reader is invited to see that
the work space
\[
 E_a\triangleq \bigg\{u:u(x)~\text{is Lebesgue measurable s.t.}\int_{\R^2}|\nabla u|^2dx<+\infty
 ~\text{and}\int_{\R^2}a(x)|u|^pdx<+\infty\bigg\}
 \]
endowed with the norm
$$
\|u\|_a=\left(\int_{\R^2}|\nabla u|^2dx+ \left(\int_{\R^2}a(x)|u|^pdx\right)^{\frac2p}\right)^{\frac12},~\forall u\in E_a,
$$
is equivalent to $(E,\|\cdot\|)$
because $a$ is a positive and bounded function in $\R^2$.
Thus, we will exploit the work space $(E,\|\cdot\|)$,
instead of $(E_a,\|\cdot\|_a)$, just for simplicity.
Due to the lack of compactness caused by the critical exponential growth and the absence of the compact imbedding
$E_r\hookrightarrow L^s(\R^2)$ for every $p<s<+\infty$,
 the foremost point of the proof of Theorem \ref{maintheorem2}
is to restore the compactness.
Inspired by \cite{WT},
we need to investigate the existence of ground
state solutions of the associated ``limit problem" of \eqref{mainequation1}, which is given as
\begin{equation}\label{mainequation2}
	\left\{
	\begin{array}{ll}
		\displaystyle    -\Delta u+a_\infty |u|^{p-2}u +A_0 u+\sum\limits_{j=1}^2A_j^2 u= f(u), \\
		\displaystyle     \partial_1A_2-\partial_2A_1=-\frac{1}{2}|u|^2,~\partial_1A_1+\partial_2A_2=0,\\
		\displaystyle    \partial_1A_0=A_2|u|^2,~ \partial_2A_0=-A_1|u|^2.
	\end{array}
	\right.
\end{equation}

We obtain the following result.
 \begin{theorem}\label{maintheorem3}
 Let $1<p<2$. If $f$ satisfies \eqref{definition} and $(f_1)-(f_4)$, then
Eq. \eqref{mainequation2} possesses a positive ground state solution in $E$.
\end{theorem}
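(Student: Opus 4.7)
My plan is to prove Theorem \ref{maintheorem3} by a Nehari-manifold minimization coupled with a concentration-compactness argument, using the translation invariance of the autonomous limit problem \eqref{mainequation2} to compensate for the loss of the radial compact embedding $E_r\hookrightarrow L^s(\R^2)$ that drove the earlier works \cite{ZTC,Shen2}.

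Using the explicit representations \eqref{CSS2d}-\eqref{CSS2e}, I would reduce \eqref{mainequation2} to the single-equation functional
\[
I_\infty(u)=\frac{1}{2}\int_{\R^2}|\nabla u|^2\,dx+\frac{1}{2}\mathcal{N}(u)+\frac{a_\infty}{p}\int_{\R^2}|u|^p\,dx-\int_{\R^2}F(u)\,dx,
\]
where $\mathcal{N}(u)$ is the nonlocal Chern--Simons contribution, nonnegative and obeying the scaling $\mathcal{N}(tu)=t^6\mathcal{N}(u)$. Theorem \ref{maintheorem1}, \eqref{definition} and $(f_1)$ yield $I_\infty\in C^1(E,\R)$. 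The mountain-pass geometry then follows in the usual way: a positive lower bound on small spheres via $(f_1)$ and Theorem \ref{maintheorem1}, together with $I_\infty(te)\to-\infty$ from $(f_4)$. Combining $(f_2)$, the $t^6$-homogeneity of $\mathcal{N}$, and $1<p<2<6$, the fibering $t\mapsto I_\infty(tu)$ admits a unique positive maximum for every $u\in E\setminus\{0\}$; hence the Nehari manifold $\mathcal{N}_\infty$ is well defined, radially homeomorphic to the unit sphere of $E$, and $c_\infty:=\inf_{\mathcal{N}_\infty}I_\infty$ coincides with the mountain-pass level.

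The crucial minimax estimate is $c_\infty<2\pi/\alpha_0$: testing $I_\infty$ on a scaled Moser logarithmic spike $\omega_n$ supported in a small ball and using $(f_4)$ to bound $\int F(t\omega_n)$ from below, while the $L^p$ and Chern--Simons contributions are of lower order under this scaling, yields $\sup_{t>0}I_\infty(t\omega_n)<2\pi/\alpha_0$ for $n$ large. This threshold is precisely what allows the endpoint Trudinger--Moser inequality of Theorem \ref{maintheorem1} to be applied to a normalized Cerami sequence without losing control of the exponential nonlinearity. I would then extract a Cerami sequence $(u_n)\subset E$ at level $c_\infty$, deduce boundedness from $(f_3)$ combined with the positivity of $\mathcal{N}$ and the Cerami identity, and use a Lions-type vanishing lemma adapted to $E$ together with the level estimate to produce $y_n\in\R^2$ with $\tilde u_n:=u_n(\cdot+y_n)\rightharpoonup\tilde u\not\equiv 0$ in $E$. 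Translation invariance preserves the Cerami property; weak continuity of the Chern--Simons nonlocal operators and uniform exponential integrability from Theorem \ref{maintheorem1} let one pass to the limit $I'_\infty(\tilde u)=0$, while a Brezis--Lieb / Fatou argument gives $I_\infty(\tilde u)\le c_\infty$. Since $\tilde u\in\mathcal{N}_\infty$, equality holds and $\tilde u$ is a ground state. Finally, $(f_1)$ forces $\tilde u\ge 0$ by testing against $\tilde u^-$, and elliptic regularity with the strong maximum principle upgrades this to $\tilde u>0$.

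The main obstacle I anticipate is the non-vanishing / weak-continuity stage: because $E=\{u:\nabla u\in L^2,\,u\in L^p\}$ with $p\in(1,2)$ couples a low-summability norm with the gradient, the Lions vanishing lemma and the weak continuity of the nonlocal $A_j[u]$ terms must be re-derived in this functional setting, and the whole compactness scheme has to be coordinated with both the critical level estimate $c_\infty<2\pi/\alpha_0$ and the endpoint Trudinger--Moser inequality established in Theorem \ref{maintheorem1}.
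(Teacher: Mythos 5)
Your proposal follows essentially the same route as the paper: mountain-pass geometry plus the Nehari characterization $c_\infty=m_\infty$ via the monotonicity in $(f_2)$, the Moser-sequence estimate $c_\infty<2\pi/\alpha_0$ from $(f_3)$--$(f_4)$, a Lions-type vanishing lemma adapted to $E$ (proved in the paper via a Gagliardo--Nirenberg inequality), translation of the Cerami sequence to a nontrivial weak limit, and a Fatou/Nehari argument to conclude $J_\infty(u_\infty)=m_\infty$. The only minor discrepancy is that boundedness of the Cerami sequence is obtained in the paper from $(f_2)$ (which gives $f(t)t\ge 6F(t)$) together with $1<p<2$, rather than from $(f_3)$, but this does not change the argument.
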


\begin{remark}
Obviously, one realizes that Theorem \ref{maintheorem3} also provides a positive answer to the Question (II)
above. In the proof of Theorem \ref{maintheorem3}, the most striking point is that we success in establishing the Vanishing lemma
corresponding to the work space $(E,\|\cdot\|)$, see Theorem \ref{Vanishing} below. Although the essential idea originates
from its classic version due to Lions, c.f. \cite[Lemma 1.21]{Willem},
we have to make some efforts to prove it and it may prompt some further studies
for zero-mass Schr\"{o}dinger equation.
\end{remark}

With Theorem \ref{maintheorem3} in hands, the solvability of Theorem \ref{maintheorem2}
 becomes available so far, but we emphasize here that the condition $(f_2)$
plays a crucial role in restoring the compactness,
see Lemma \ref{unique} and \eqref{equality} for instance.
As a consequence, one naturally wonders that
whether there still exists a mountain-pass type solution
for Eq. \eqref{mainequation2}
when $(f_2)$ is replaced with a weak type condition below
\begin{itemize}
	\item[$(f'_2)$] for all $t>0$, there holds $f(t)t-6F(t)\geq0$.
\end{itemize}

Actually, we are going to conclude the existence result as follows.

 \begin{theorem}\label{maintheorem4}
 Let $1<p<2$ and suppose $(A_1)-(A_2)$. If $f$ satisfies \eqref{definition} and $(f_1)-(f'_2)$ as well as $(f_3)-(f_4)$, then
Eq. \eqref{mainequation2} has a positive mountain-pass type solution in $E$.
\end{theorem}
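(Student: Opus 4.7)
The plan is to find a positive critical point of the $C^1$ functional on $E$ given by
\[
J(u) = \frac{1}{2}\int_{\R^2}|\nabla u|^2\,dx + \frac{a_\infty}{p}\int_{\R^2}|u|^p\,dx + \frac{1}{2}\sum_{j=1}^2\int_{\R^2} A_j[u]^2 u^2\,dx - \int_{\R^2}F(u)\,dx.
\]
Using $(f_1)$, $(f_4)$ and the Trudinger-Moser estimate of Theorem~\ref{maintheorem1}, one verifies the classical mountain-pass geometry in $E$: for some small $\rho>0$ one has $\inf_{\|u\|=\rho}J(u)>0$, while any nonnegative $u_0\in E\setminus\{0\}$ yields $J(tu_0)\to -\infty$ as $t\to\infty$, since $\int F(tu_0)\,dx$ grows exponentially in $t^2$ whereas the Chern-Simons contribution is only quartic in $t$. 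Set
\[
c_{mp}=\inf_{\gamma\in\Gamma}\max_{s\in[0,1]}J(\gamma(s)),\qquad \Gamma=\{\gamma\in C([0,1],E):\gamma(0)=0,\ J(\gamma(1))<0\}.
\]

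The second step is to show that $c_{mp}<\tfrac{2\pi}{\alpha_0}$, i.e.\ that the minimax value sits strictly below the critical Trudinger-Moser threshold supplied by Theorem~\ref{maintheorem1}. I would adapt Moser's sequence of test functions $(M_n)\subset E$, normalized so that $\|M_n\|\leq 1$, compute $\max_{t\geq 0}J(tM_n)$ using the precise asymptotic constant $\beta_0>0$ from $(f_4)$, and pick $n$ large so that the resulting path attains a maximum value below $\tfrac{2\pi}{\alpha_0}$. This is parallel to the level estimate behind Theorem~\ref{maintheorem3}, with the simplification that only a path-minimax value is needed rather than a Nehari-Pohozaev infimum, so the strong monotonicity $(f_2)$ is not needed and $(f'_2)$ suffices.

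Deformation arguments in the spirit of Ghoussoub-Ekeland then produce a Cerami sequence $(u_n)\subset E$ with $J(u_n)\to c_{mp}$ and $(1+\|u_n\|)\|J'(u_n)\|_{E^{*}}\to 0$. The role of $(f'_2)$ is decisive for boundedness: since the Nehari coefficient of the Chern-Simons term is $3$ (as $A_j[tu]=t^2 A_j[u]$), the combination
\[
J(u_n)-\tfrac{1}{6}\langle J'(u_n),u_n\rangle=\tfrac{1}{3}\int_{\R^2}|\nabla u_n|^2\,dx+\Bigl(\tfrac{1}{p}-\tfrac{1}{6}\Bigr)a_\infty\int_{\R^2}|u_n|^p\,dx+\tfrac{1}{6}\int_{\R^2}\bigl(f(u_n)u_n-6F(u_n)\bigr)\,dx
\]
annihilates the Chern-Simons contribution and, thanks to $(f'_2)$ together with $p<2<6$, is a sum of nonnegative terms. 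Consequently $\|u_n\|$ is bounded and $u_n\rightharpoonup u$ in $E$ along a subsequence; hypothesis $(f_3)$ supplies the standard de~Figueiredo-Miyagaki-Ruf type control on $\int f(u_n)u_n\,dx$ needed to pass to the $L^1_{\text{loc}}$ limit in the exponential nonlinearity.

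The main obstacle is ruling out $u\equiv 0$. Here I would combine the level estimate $c_{mp}<\tfrac{2\pi}{\alpha_0}$ with the zero-mass vanishing lemma (Theorem~\ref{Vanishing}): were $(u_n)$ to vanish in $L^q(\R^2)$ for every $q>p$, Theorem~\ref{maintheorem1} together with H\"older interpolation would force $\int_{\R^2} f(u_n)u_n\,dx\to 0$, and substituting into $\langle J'(u_n),u_n\rangle=o(1)$ would give $\|u_n\|\to 0$ and $J(u_n)\to 0$, contradicting $c_{mp}>0$. Once nontriviality is secured, the weak continuity of the nonlocal Chern-Simons terms along bounded sequences in $E$ (recalled in Section~\ref{Sec2}) yields $J'(u)=0$ and $0<J(u)\leq c_{mp}$, providing a mountain-pass type critical point at the desired energy. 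Positivity of $u$ follows by testing the equation against $u^{-}$, using $f\equiv 0$ on $(-\infty,0]$ from $(f_1)$, and applying the strong maximum principle.
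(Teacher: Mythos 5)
Your overall strategy (mountain-pass geometry, level estimate below $2\pi/\alpha_0$, boundedness of the Cerami sequence via $J-\tfrac16 J'$ and $(f'_2)$, exclusion of vanishing via Theorem \ref{Vanishing}) is sound as far as it goes, but it diverges from the paper at the decisive point and leaves a genuine gap. Working in the whole space $E$, non-vanishing only gives you $\int_{B_r(y_n)}|u_n|^p\geq\tau$ for some possibly unbounded $(y_n)$; after translating (which you omit but which is legitimate by translation invariance of $J_\infty$) you obtain a nontrivial critical point $u$ with, by Fatou and $(f'_2)$, merely $0<J_\infty(u)\leq c_{mp}$. To upgrade this to $J_\infty(u)=c_{mp}$ --- which is what ``mountain-pass type solution'' means here, cf.\ Theorem \ref{mpsolution} --- the paper's Section \ref{Sec3} argument uses the chain $c_\infty=m_\infty=d_\infty$ from Lemma \ref{unique}, and that lemma rests essentially on the strict monotonicity of $t\mapsto f(t)/t^5$ in $(f_2)$ (both for the uniqueness of the Nehari projection and for the inequality $J_\infty(u)\geq J_\infty(tu)$ for $u\in\mathcal N_\infty$). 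Under the weaker $(f'_2)$ none of this is available, so your final sentence ``providing a mountain-pass type critical point at the desired energy'' is not justified: you only know the energy lies in $(0,c_{mp}]$.

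The paper circumvents this by changing the functional-analytic setting rather than the minimax scheme: it restricts to the radial subspace $E_r$, where the embedding $E_r\hookrightarrow L^s(\R^2)$ is compact for $s>p$, and proves in Lemma \ref{Cccondition} that the Cerami sequence converges \emph{strongly} in $E_r$. That argument needs several ingredients absent from your sketch: the Br\'ezis--Lieb splitting of the Chern--Simons term (Lemma \ref{BLCS}), the estimate $\limsup_n|\nabla u_n-\nabla u_0|_2^2<4\pi/\alpha_0$ obtained from the energy expansion together with $J_\infty(u_0)\geq 0$, a Trudinger--Moser bound applied to the rescaled differences $\hat u_n$ to show $\int f(u_n)(u_n-u_0)\,dx\to0$, and Simon's inequality to recover convergence of the $L^p$-part of the norm. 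Strong convergence then gives $J_\infty(u_0)=c_\infty$ directly, with no appeal to the Nehari manifold. To repair your proof you would either have to adopt this radial-compactness route, or supply an independent argument that the translated weak limit achieves the level $c_{mp}$; as written, the claim of a solution at the mountain-pass energy is not established.
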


\begin{remark}
We note that Theorem \ref{maintheorem4} solves Question (II) and (III)
partially.
Let us point out here that we will borrow some idea adopted in \cite{Shen2}
to reach the proof. Nevertheless, there are some new challenges
that prevent us repeating the arguments simply. For example,
as to the critical exponential case in \cite{Shen2}, the author strongly relied on the following condition of type
\begin{itemize}
	\item[$(f'_4)$] there are $\gamma>0$ and $s>6$ such that $F(t)\geq\gamma t^s$ for all $t\geq0$,
\end{itemize}
to restore the compactness, where $\gamma>0$ is sufficiently large.
One would easily deduce that the condition $(f_4)$ in the present article is weaker than $(f'_4)$
which is a global one that does never reveal the
essential feature of the critical exponential growth in \eqref{definition}.
\end{remark}

The outline of the paper is organized as follows. In Section \ref{Sec2}, we mainly
present some preliminary results and show the proofs of Theorem \ref{maintheorem1}.
Sections \ref{Sec3} and \ref{Sec4} are devoted to the proofs of Theorems \ref{maintheorem3} and \ref{maintheorem2}, respectively.
The proof of Theorem \ref{maintheorem4} shall be presented in Section \ref{Sec5}.
\\\\
\noindent \textbf{Notations:} From now on in this paper, otherwise mentioned, we ultilize the following notations:
\begin{itemize}
	\item   $C,C_1,C_2,\cdots$ denote any positive constant, whose value is not relevant and $\R^+\triangleq(0,+\infty)$.
	 	\item      Let $(X,\|\cdot\|_X)$ be a Banach
space with dual space $(X^{-1},\|\cdot\|_{X^{-1}})$, and $\Psi$ be functional on $X$.
	\item The (C) sequence at a level $c\in\R$ ($(C)_c$ sequence in short)
corresponding to $\Psi$ means that $\Psi(x_n)\to c$ and $(1+\|x_n\|_X)\|\Psi^{\prime}(x_n)\|_{X^{-1}}\to 0$ in $\R$ as $n\to\infty$, where
$\{x_n\}\subset X$.
	\item  $|\cdot|_p$ stands for the usual norm of the Lebesgue space $L^{p}(\mathbb{R}^2)$ for all $p \in [1,+\infty]$.
\item For any $\varrho>0$ and every $x\in \R^2$, $B_\varrho(x)\triangleq\{y\in \R^2:|y-x|<\varrho\}$.
\item $o_{n}(1)$ denotes the real sequences with $o_{n}(1)\to 0$
 as $n \to +\infty$.
\item $``\to"$ and $``\rightharpoonup"$ stand for the strong and
 weak convergence in the related function spaces, respectively.
\end{itemize}

\section{Variational Framework and Preliminaries}\label{Sec2}
In this section, we are going to exhibit some preliminary results which enable us
to treat the problems variationally.

First of all, let us recall some imbedding results which would play a foremost role in
formulating the variational structure.
The following results can be found in \cite[Lemmas 2.1 and 2.2]{Shen2},
so we shall omit the detailed proofs.

\begin{lemma}\label{imbedding}
Assume $1<p<2$, then the imbedding $E \hookrightarrow L^s(\R^2)$ is continuous
and $E \hookrightarrow L^s_{\text{loc}}(\R^2)$ is compact for all $p\leq s<+\infty$,  respectively.
Moreover, $(E,\|\cdot\|)$ is a reflexive Banach space.
\end{lemma}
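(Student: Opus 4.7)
My plan is to establish the three claims of Lemma \ref{imbedding} in sequence, using Gagliardo-Nirenberg interpolation in the plane, the Rellich-Kondrachov theorem, and standard facts about reflexive Banach spaces.

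For the continuous imbedding $E\hookrightarrow L^s(\R^2)$ with $s\in[p,+\infty)$, I would invoke the planar Gagliardo-Nirenberg inequality
\[
|u|_s\leq C_s\,|\nabla u|_2^{1-p/s}\,|u|_p^{p/s},\qquad u\in C_0^\infty(\R^2),
\]
which corresponds to the interpolation exponent $\theta=1-p/s\in[0,1)$ coming from the scaling identity $\tfrac{1}{s}=\theta\bigl(\tfrac{1}{2}-\tfrac{1}{2}\bigr)+(1-\theta)\tfrac{1}{p}$; it is precisely the $N=2$ case that causes the gradient term to carry zero differential weight. Since $|\nabla u|_2\leq\|u\|$ and $|u|_p\leq\|u\|$ directly from the definition of the norm, this yields $|u|_s\leq C_s\|u\|$, and a routine density/truncation argument transfers the bound to all $u\in E$. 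The endpoint $s=p$ is trivial.

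Next, for the local compact imbedding, let $\{u_n\}\subset E$ be bounded. By the previous step $\{u_n\}$ is bounded in $L^2(\R^2)$, and the uniform bound on $|\nabla u_n|_2$ then forces $\{u_n\}$ to be bounded in $H^1(\Omega)$ on every bounded open $\Omega\subset\R^2$. The Rellich-Kondrachov theorem in dimension two delivers a subsequence converging in $L^s(\Omega)$ for every $s\in[1,+\infty)$; a diagonal extraction along an exhausting sequence of balls then yields a single subsequence converging in $L^s_{\mathrm{loc}}(\R^2)$ for every $s\in[p,+\infty)$.

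Finally, to see that $(E,\|\cdot\|)$ is a reflexive Banach space, I would compare $E$ with the product space $\mathcal{Y}\triangleq L^2(\R^2;\R^2)\times L^p(\R^2)$ endowed with the norm $\|(V,v)\|_{\mathcal{Y}}=\sqrt{|V|_2^2+|v|_p^2}$ via the linear isometry $T(u)=(\nabla u,u)$. If $\{u_n\}\subset E$ is Cauchy, then $\nabla u_n\to V$ in $L^2$ and $u_n\to v$ in $L^p$; uniqueness of the distributional gradient forces $V=\nabla v$, so $v\in E$ and $u_n\to v$ in $E$, which proves completeness. The same argument shows that $T(E)$ is closed in $\mathcal{Y}$. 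Because $1<p<2$ makes both $L^p(\R^2)$ and $L^2(\R^2;\R^2)$ uniformly convex and hence reflexive, the product $\mathcal{Y}$ is reflexive, and so is its closed subspace $T(E)\cong E$. The main obstacle, if any, lies in pinning down the Gagliardo-Nirenberg estimate with the sharp range $s\geq p$ and controlling it uniformly by $\|u\|$; once this is available, the compactness and reflexivity statements follow from textbook Banach-space machinery.
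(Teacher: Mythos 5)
Your proof is correct: the Gagliardo--Nirenberg interpolation $|u|_s\leq C_s|\nabla u|_2^{1-p/s}|u|_p^{p/s}$ (valid in the plane for $\theta=1-p/s<1$) gives the continuous imbedding, Rellich--Kondrachov plus diagonal extraction gives local compactness, and the isometric identification of $E$ with a closed subspace of the reflexive product $L^2(\R^2;\R^2)\times L^p(\R^2)$ gives reflexivity. The paper itself omits the proof and simply cites \cite[Lemmas 2.1 and 2.2]{Shen2}, but your argument is the standard one and matches the tools the paper relies on elsewhere (notably the same Gagliardo--Nirenberg inequality \eqref{GN} used in Theorem \ref{Vanishing}), so there is no substantive divergence.
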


Then, we turn to contemplate the so called Chern-Simons term in Eq. \eqref{mainequation1}.
To begin with, there exist some meaningful and significant observations. According to
 the second equation and the last two equations in Eq. \eqref{mainequation1}, for each $u\in E$, one has
\begin{equation}\label{gauge0}
\begin{gathered}
\int_{\mathbb{R}^2}A_0|u|^2dx  =2\int_{\mathbb{R}^2}A_0(\partial_2A_1-\partial_1A_2)dx   \hfill\\
 \ \ \ \   =2\int_{\mathbb{R}^2}(A_2\partial_1A_0-A_1\partial_2A_0)dx
 =2\int_{\mathbb{R}^2}(A_1^2+A_2^2)|u|^2dx.\hfill\\
\end{gathered}
\end{equation}

As a by-product of
the well-known Hardy-Littlewood-Sobolev inequality \cite[Theorem 4.3]{LM}, we could conclude
the following estimates to the gauge fields $A_j$ for $j\in\{0, 1, 2\}$.

\begin{lemma}\label{gauge}
(see \cite[Propositions 4.2-4.3]{Huh2}) Assume $1<r<2$ and $\frac{1}{r}
-\frac{1}{\widehat{r}}=\frac{1}{2}$, then
\[
|A_j|_{\widehat{r}}\leq C_r|u|_{2r}^2~ \text{for}~ j=1,2,~
|A_0|_{\widehat{r}}\leq C_r|u|_{2r}^2|u|_4^2,
\]
where $C_r>0$ is a constant dependent of $r$.
\end{lemma}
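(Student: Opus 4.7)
The plan is to exploit the explicit integral representations of $A_1, A_2$ and $A_0$ given in \eqref{CSS2e} and \eqref{CSS2d} together with the Hardy-Littlewood-Sobolev (HLS) inequality and H\"older's inequality. The main observation is that the kernels $\frac{x_i}{|x|^2}$ have modulus bounded by $\frac{1}{|x|}$, which is a Riesz-type kernel of order $1$ in $\R^2$, so standard HLS theory applies with $\lambda=1$ and $n=2$.

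First I would handle $A_1$ and $A_2$. From \eqref{CSS2e} the pointwise bound
\[
|A_j[u](x)|\le \frac{1}{4\pi}\int_{\R^2}\frac{u^2(y)}{|x-y|}\,dy,\qquad j=1,2,
\]
holds since $|x_i-y_i|\le|x-y|$. Setting $f=u^2$ and applying HLS with parameters $1<r<2$, $\lambda=1$, $n=2$, the relation $\frac{1}{r}-\frac{1}{\widehat{r}}=\frac{1}{2}=1-\frac{\lambda}{n}$ is exactly the HLS exponent condition, yielding
\[
|A_j|_{\widehat{r}}\le C_r\|u^2\|_r=C_r|u|_{2r}^2.
\]
This gives the first inequality in the lemma.

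For $A_0$ I would start from the representation \eqref{CSS2d}, which gives the pointwise estimate
\[
|A_0[u](x)|\le \frac{1}{2\pi}\int_{\R^2}\frac{|A_1(y)|+|A_2(y)|}{|x-y|}u^2(y)\,dy.
\]
Now I would apply HLS again (with the same exponents $r$, $\widehat{r}$) to the inner convolution, which reduces the task to estimating $|A_j|u^2$ in $L^r(\R^2)$. Splitting the product by H\"older with conjugate pair $(\widehat{r},2)$, noting that $\frac{1}{\widehat{r}}+\frac{1}{2}=\frac{1}{r}$ by hypothesis, I obtain
\[
\||A_j|u^2\|_r\le |A_j|_{\widehat{r}}\,|u^2|_2=|A_j|_{\widehat{r}}\,|u|_4^2.
\]
Combining this with the already proved bound $|A_j|_{\widehat{r}}\le C_r|u|_{2r}^2$ delivers $|A_0|_{\widehat{r}}\le C_r|u|_{2r}^2|u|_4^2$.

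The only potential obstacle is making sure the HLS exponent constraints are met in both applications, but the hypothesis $\frac{1}{r}-\frac{1}{\widehat{r}}=\frac{1}{2}$ together with $1<r<2$ guarantees $2<\widehat{r}<\infty$ and that the H\"older split $(\widehat{r},2)$ is admissible, so no borderline case arises. Everything else is a bookkeeping of exponents, so I would expect the proof to close cleanly in a few lines modulo quoting HLS.
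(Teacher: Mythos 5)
Your proof is correct and follows exactly the route the paper intends: the paper does not prove this lemma but cites \cite[Propositions 4.2--4.3]{Huh2} and explicitly flags it as "a by-product of the well-known Hardy--Littlewood--Sobolev inequality," which is precisely your argument (pointwise kernel bound $|x_i-y_i|/|x-y|^2\le 1/|x-y|$, HLS with $\lambda=1$, $n=2$ for $A_1,A_2$, then HLS plus the H\"older split $\frac1r=\frac1{\widehat r}+\frac12$ for $A_0$). The exponent bookkeeping is all in order, so nothing further is needed.
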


Combining Lemmas \ref{imbedding} and \ref{gauge}, one can easily see that
\begin{equation}\label{gauge1}
  |A_ju|_2\leq |A_j|_{\widehat{r}}|u|_{\frac{r}{r-1}}
\leq C_r|u|_{2r}^2|u|_{\frac{r}{r-1}}\leq \bar{C}_r\|u\|^3,
~ \text{for}~ j=1,2,
\end{equation}
because $2r>2$ and $r/(r-1)>2$, where $\bar{C}_r>0$  depends only on  $r>1$.
We also need the following Br\'{e}zis-Lieb type lemma for the Chern-Simons term.
\begin{lemma}\label{BLCS}
(see \cite[Lemma 2.4]{GZ})
If $u_n\rightharpoonup u$ in $E$ and $u_n\to u$ a.e. in $\R^2$, then one has
$A_j [u_n]\to A_j[u]$ a.e. for $j=1,2$,
\begin{equation}\label{BLCS1}
 \left\{
   \begin{array}{ll}
  \displaystyle    \lim_{n\to\infty}\int_{\R^2}A_0[u_n]u_n\psi dx=\int_{\R^2}A_0[u ]u \psi dx,~\forall \psi\in E,\\
   \displaystyle      \lim_{n\to\infty}\int_{\R^2}A_j^2[u_n]u_n\psi dx=\int_{\R^2}A_j^2[u ]u \psi dx,
   ~\forall \psi\in E~\text{\emph{with}}~j=1,2,
   \end{array}
 \right.
\end{equation}
 and
\begin{equation}\label{BLCS2}
\lim_{n\to\infty}\int_{\R^2}\big[A_j^2[u_n]|u_n|^2-A_j^2[u_n-u]|u_n-u|^2\big]dx
=\int_{\R^2}A_j^2[u]|u|^2dx,~\text{for}~j=1,2.
\end{equation}
\end{lemma}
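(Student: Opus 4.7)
The plan is to establish the three claims sequentially, following the Br\'{e}zis-Lieb strategy adapted to the nonlocal gauge operators. The representations \eqref{CSS2d}--\eqref{CSS2e} show that each $A_j[u]$ ($j=1,2$) is a convolution of $u^2$ with a kernel $K_j$ satisfying $|K_j(z)|\leq C/|z|$, and the a priori bounds in Lemma \ref{gauge} (together with the consequence \eqref{gauge1}) will be used repeatedly.

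First, to prove $A_j[u_n]\to A_j[u]$ a.e.\ for $j=1,2$, I fix $x$ off a null set and write
\[
A_j[u_n](x)-A_j[u](x)=\int_{\R^2}K_j(x-y)\bigl(u_n^2(y)-u^2(y)\bigr)dy.
\]
Splitting into $B_R(x)$ and its complement, the inner piece is handled by H\"{o}lder's inequality using $1/|x-\cdot|\in L^{q'}_{\text{loc}}(\R^2)$ for some $q'<2$, together with the compact embedding in Lemma \ref{imbedding}, which yields $u_n^2\to u^2$ strongly in $L^q_{\text{loc}}$ for the conjugate exponent. The outer tail is bounded by $\tfrac{C}{R}(|u_n|_2^2+|u|_2^2)$ and is uniformly small as $R\to\infty$, using $E\hookrightarrow L^2(\R^2)$ from Lemma \ref{imbedding}. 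The a.e.\ convergence of $A_0[u_n]$ then follows by applying the same splitting argument to the representation \eqref{CSS2d}, now that $A_j[u_n]\to A_j[u]$ a.e.\ is in hand.

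For the limits in \eqref{BLCS1}, I would combine the pointwise convergence just obtained with a dominated convergence argument. H\"{o}lder's inequality together with Lemma \ref{gauge} and \eqref{gauge1} supplies a uniform $L^1$-dominant for the integrands $A_0[u_n]u_n\psi$ and $A_j^2[u_n]u_n\psi$ controlled by $\sup_n\|u_n\|$ and $\|\psi\|$, which is enough to pass to the limit.

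For the Br\'{e}zis-Lieb type identity \eqref{BLCS2}, I set $v_n=u_n-u$, so $v_n\rightharpoonup 0$ in $E$, and expand using the bilinearity of $u\mapsto A_j[u]=K_j\ast u^2$:
\[
A_j[u_n]=A_j[u]+A_j[v_n]+2\,\Gamma_j[u,v_n],\qquad \Gamma_j[u,v_n](x)\triangleq\int_{\R^2}K_j(x-y)u(y)v_n(y)dy.
\]
Squaring and multiplying by $u_n^2=u^2+v_n^2+2uv_n$ yields a finite sum, and \eqref{BLCS2} reduces to showing that every cross term that contains either $\Gamma_j[u,v_n]$ or pairs $A_j[v_n]$ with a factor carrying $u$ is $o(1)$. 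Pointwise vanishing of $\Gamma_j[u,v_n]$ follows from the same splitting argument together with the fact that $v_n\rightharpoonup 0$ compactly in $L^r_{\text{loc}}(\R^2)$ for every finite $r$; uniform integrability of the remaining products is again provided by Lemma \ref{gauge} applied to the bounded sequences $(u_n)$ and $(v_n)$. A Vitali/dominated convergence argument then completes the cancellation and leaves exactly $A_j^2[u]|u|^2$.

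The main obstacle is controlling these cross terms uniformly in $n$: since $A_j$ is quadratic and nonlocal, $A_j[v_n]$ does not tend to zero pointwise in any obvious way, so a direct application of dominated convergence fails. The resolution is to isolate the linear-in-$v_n$ factor $\Gamma_j[u,v_n]$, whose pointwise vanishing is genuinely delivered by the compact local embedding of $E$, and to dominate the quadratic-in-$v_n$ factors uniformly by the bounds of Lemma \ref{gauge}.
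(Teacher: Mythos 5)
The paper does not prove this lemma at all: it is quoted verbatim from \cite[Lemma 2.4]{GZ}, so there is no in-paper argument to compare yours against. Judged on its own merits, your overall strategy (splitting the convolution kernel $K_j$, with $|K_j(z)|\le C|z|^{-1}$, into a near part controlled by H\"older plus the compact local embedding of Lemma \ref{imbedding} and a far tail of size $C R^{-1}|u_n|_2^2$, then Vitali-type passage to the limit using the uniform bounds of Lemma \ref{gauge}) is the standard and correct route, and your proofs of the a.e.\ convergence and of \eqref{BLCS1} go through, modulo the remark that what H\"older plus Lemma \ref{gauge} gives you is uniform integrability and tightness (hence Vitali), not a genuine pointwise $L^1$ dominant.

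The soft spot is in your treatment of \eqref{BLCS2}. You assert that ``$A_j[v_n]$ does not tend to zero pointwise in any obvious way,'' and you build your cancellation scheme around avoiding this fact, claiming that the cross terms pairing $A_j[v_n]$ with factors carrying $u$ can be disposed of by uniform domination via Lemma \ref{gauge} alone. That cannot work: uniform integrability never forces a term such as $\int_{\R^2}A_j^2[v_n]\,u^2\,dx$ or $\int_{\R^2}A_j[u]A_j[v_n]\,u^2\,dx$ to vanish; you need the integrand to go to zero a.e.\ (equivalently, $A_j[v_n]\rightharpoonup 0$ in $L^{\widehat r}$), and these terms contain neither $\Gamma_j[u,v_n]$ nor a bare factor of $v_n$, so your stated mechanism does not reach them. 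The good news is that your professed obstacle is illusory: the very splitting argument you use in the first step, applied to $v_n\rightharpoonup 0$, shows that for every $x$,
\[
|A_j[v_n](x)|\le \Big\||x-\cdot|^{-1}\Big\|_{L^{q'}(B_R(x))}\,\|v_n\|^2_{L^{2q}(B_R(x))}+\frac{C}{R}\,|v_n|_2^2\longrightarrow 0,
\]
since $v_n\to 0$ strongly in $L^{2q}_{\rm loc}(\R^2)$ by Lemma \ref{imbedding} and $|v_n|_2$ is bounded. Once you record that $A_j[v_n]\to 0$ and $\Gamma_j[u,v_n]\to 0$ pointwise, every one of the sixteen cross terms in the expansion of $\bigl(A_j[u]+A_j[v_n]+2\Gamma_j[u,v_n]\bigr)^2(u+v_n)^2$ contains a factor converging to zero a.e.\ while being anchored to a fixed tight function ($u$, $A_j[u]$, or their powers), so Vitali applies and only $A_j^2[u]u^2$ and $A_j^2[v_n]v_n^2$ survive. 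With that single correction your proof closes.
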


Finally, we shall focus on the nonlinearity $f$.
Whereas, it possesses the critical exponential growth at infinity in the Trudinger-Moser sense at infinity,
we have to derive the Trudinger-Moser type inequality associated with the work space $E$
in this article.

\begin{proof}[\textbf{\emph{Proof of Theorem \ref{maintheorem1}}}]
Let $\alpha\in(0,4\pi]$ and $u\in E$ with $\|u\|\leq1$. We denote by $u^*$
the Schwarz symmetrization of $u$, then $u^*$ is radial and non-increasing.
Thanks to the results in \cite{Kawohl},
\[
\int_{\R^2} |\nabla u^*|^2 dx\leq\int_{\R^2} |\nabla u|^2 dx,~
 \int_{\R^2} |u^*|^p dx=\int_{\R^2} |u|^p dx
\]
and
\[
\int_{\R^2} \Phi_{\alpha,j_0}(u^*)dx=\int_{\R^2} \Phi_{\alpha,j_0}(u)dx.
\]
So, without loss of generality, we can suppose that $u\in E$ is non-increasing.
Given an $R>0$ which will be determined later,
due to Lemma \ref{imbedding}, we could deduce that the function $v(x)\triangleq u(x)-u(R)$
belongs to $H_0^1(B_R(0))$. Adopting the Young's inequality, there holds
\begin{align*}
 u^2(x) & =v^2(x)+2v(x)u(R)+u^2(R)\leq v^2(x)+[1+v^2(x)u^2(R)]+u^2(R) \\
   & =[1+u^2(R)]v^2(x)+1+u^2(R)\triangleq w^2(x)+1+u^2(R),~\forall x\in B_R(0).
\end{align*}
Obviously, $w\triangleq\sqrt{1+u^2(R)}v\in H_0^1(B_R(0))$ and $\nabla w=\sqrt{1+u^2(R)}\nabla v=\sqrt{1+u^2(R)}\nabla u$.
Moreover, we recall from \cite[Lemma A.IV]{Berestycki1},
because $u\in L^{p}(\R^2)$ by Lemma \ref{imbedding}, there is a constant $C_p>0$ which is independent of $u$ such that
\begin{equation}\label{Strauss1}
|u(x)|\leq C_p|x|^{-\frac2p}|u|_p~\text{for all}~x\neq0.
  \end{equation}
Consequently, with the help of \eqref{Strauss1}, we are able to see that
\begin{align*}
  \int_{\R^2}|\nabla w|^2dx &  =[1+u^2(R)]\int_{\R^2}|\nabla u|^2dx
  \leq[1+u^2(R)]\left[1-\left(\int_{\R^2}|u|^pdx\right)^{\frac2p}\right] \\
    & \leq1-\left(\int_{\R^2}|u|^pdx\right)^{\frac2p}+u^2(R)
    \leq1-(1-C_p^2R^{-\frac4p})\left(\int_{\R^2}|u|^pdx\right)^{\frac2p}.
\end{align*}
We then choose the constant $R$ to be sufficiently large such that $1-C_p^2R^{-\frac4p}\geq0$ and so
 $|\nabla w|_2^2\leq1$. As a consequence,
we can apply the classic Trudinger-Moser inequality explored in \cite{Cao,DSD} to arrive at
$$
\int_{B_R(0)}\Phi_{4\pi,j_0}(u)\mathrm{d}x\leq \int_{B_R(0)}e^{4\pi u^2} \mathrm{d}x
\leq e^{4\pi (1+u^2(R))}\int_{B_R(0)}e^{4\pi w^2} \mathrm{d}x\leq C.
$$
From which, one concludes that
\begin{equation}\label{maintheorem1d}
\sup_{u\in E:\|u\|\leq1}\int_{B_R(0)}\Phi_{4\pi,j_0}(u)\mathrm{d}x\leq C<+\infty.
\end{equation}

On the other hand,
we can follow the proof of \cite[Theorem 1.1]{Shen2} to verify that  the integral on the complement of $B_R(0)$
is uniformly bounded. For the sake of the reader's convenience, we shall show the proof in detail. Using
\cite[Lemma A.IV]{Berestycki1} and Lemma \ref{imbedding} again,
for each $u\in L^{p^*}(\R^2)$, there is a constant $\bar{C}_p>0$ independent of $u$ such that
\begin{equation}\label{Strauss}
|u(x)|\leq \bar{C}_p|x|^{-\frac{2-p}{p}}|u|_{p^*}~\text{for all}~x\neq0.
  \end{equation}
Since $j_0=\inf\{j\in \mathbb{N}:2j\geq p^*\}$, then
one sees $2j\geq p^*$ for all $j\geq j_0$ and so applying \eqref{Strauss} and $R>1$,
\[
\begin{gathered}
 \int_{\R^2\backslash B_R(0)}\Phi_{4\pi,j_0}(u)\mathrm{d}x   =\sum_{j=j_0}^{\infty}
\frac{(4\pi)^j }{j!}\int_{\R^2\backslash B_R(0)}|u|^{2j}\mathrm{d}x
=\sum_{j=j_0}^{\infty}
\frac{(4\pi)^j }{j!}\int_{\R^2\backslash B_R(0)}|u|^{2j-p^*}|u|^{p^*}\mathrm{d}x\hfill\\
\ \ \ \   \leq \sum_{j=j_0}^{\infty}
\frac{(4\pi)^j }{j!}\int_{\R^2\backslash B_R(0)}({\bar{C}}_p|u|_{p^*})^{2j-p^*}|u|^{p^*}\mathrm{d}x
=\frac{1}{ {\bar{C}}_p^{p^*}|u|_{p^*}^{p^*}}\sum_{j=j_0}^{\infty}
\frac{(4\pi)^j }{j!}\int_{\R^2\backslash B_\varrho(0)}({\bar{C}}_p|u|_{p^*})^{2j}|u|^{p^*}\mathrm{d}x  \hfill\\
\ \ \ \  \leq   \frac{e^{4\pi({\bar{C}}_p|u|_{p^*})^2}}{ {\bar{C}}_p^{p^*}|u|_{p^*}^{p^*}}
    \int_{\R^2\backslash B_\varrho(0)} |u|^{p^*}\mathrm{d}x\leq \frac{e^{4\pi({\bar{C}}_p|u|_{p^*})^2}}{ {\bar{C}}_p^{p^*}}\leq
 \frac{e^{4\pi( \tilde{C}_p\|u\|)^2}}{ \bar{C}_p^{p^*}}  = \frac{e^{4\pi\tilde{C}_p^2 }}{ {\bar{C}}_p^{p^*}}.\hfill\\
\end{gathered}
\]
 From this inequality and \eqref{maintheorem1d}, we obtain that
\[
\mathbb{S}(\alpha)\leq C+{e^{4\pi\tilde{C}_p^2 }}{ {\bar{C}}_p^{-p^*}}<+\infty,~\forall \alpha\in(0,4\pi].
\]
The remaining parts are totally same as in \cite[Theorem 1.1]{Shen2}, so we omit them here.
\end{proof}

Now, we are able to verify that
 the variational functional $J_a:E\to\R$
defined by
$${J}_a (u)=\frac{1}{2}\int_{\R^2} [|\nabla u|^2+(A_1^2+A_2^2)u^2 ]\mathrm{d}x+\frac{1}p \int_{\R^2}a(x) |u|^p \mathrm{d}x
- \int_{\R^2} F (u)\mathrm{d}x,
$$
is well-defined and of class $\mathcal{C}^1(E,\R)$. Actually,
due to \eqref{definition} and $(f_1)$,
for all $\varepsilon>0$ and $\alpha>\alpha_0$, there is a constant $C_\varepsilon>0$ such that
\begin{equation}\label{growth1}
  |f(s)|\leq \varepsilon |s| +C_\varepsilon|s|^{q-1}\Phi_{\alpha,j_0}(s),~\forall s\in\R,
\end{equation}
where $\Phi_{\alpha,j_0}$ is defined by \eqref{Youngfunction} and $q>2$ can be arbitrarily chosen later. Using $(f_2)$, there holds
\begin{equation}\label{growth2}
  |F(s)|\leq \varepsilon |s|^{2}+C_\varepsilon|s|^{q }\Phi_{\alpha,j_0}(s),~\forall s\in\R.
\end{equation}
Moreover, without mentioned any longer,
let us exploit directly the following
inequality (see e.g. \cite[Lemma 2.1]{Yang0}):
\[
(\Phi_{\alpha,j_0}(s))^m\leq \Phi_{m\alpha,j_0}(s),~\forall s\in\R,~\alpha>0~\text{and}~m>1.
\]
With \eqref{growth1} and \eqref{growth2} in hands, exploiting Theorem \ref{maintheorem1},
 we could proceed as the calculations in \cite{Shen,Shen2}
to deduce that the variational functional $J_a$ associated with \eqref{mainequation1} is well-defined and belongs to $\mathcal{C}^1(E,\R)$
such that
\[
{J}_a' (u)[v]= \int_{\R^2} [ \nabla u\nabla v +(A_1^2+A_2^2+A_0)uv ]\mathrm{d}x+ \int_{\R^2}a(x) |u|^{p-2}uv \mathrm{d}x
- \int_{\R^2} f (u)v\mathrm{d}x,~\forall v\in E.
\]
In particular, it follows from \eqref{gauge0} that
\[
{J}_a' (u)[u]= \int_{\R^2} [ |\nabla u|^2 +3(A_1^2+A_2^2 )u^2 ]\mathrm{d}x+ \int_{\R^2}a(x) |u|^{p}\mathrm{d}x
- \int_{\R^2} f (u)u\mathrm{d}x.
\]
Hence, any (weak) solution of Eq. \eqref{mainequation1} corresponds to a critical point of $J_a$.
In order to search for the critical points of $J_a$, we introduce the following results.

\begin{lemma}\label{compact}
Let $1<p<2$ and $f$ satisfies \eqref{definition} and $(f_1)-(f_4)$. Suppose
there exists a sequence $\{u_n\}\subset E$ such that $u_n\rightharpoonup u$ in $E$
and $u_n\to u$ a.e. in $\R^2$.
If in addition, we assume that
\begin{equation}\label{compact1}
 \sup_{n\in \mathbb{N}}\int_{\R^2}f(u_n)u_n\mathrm{d}x\leq K_0
\end{equation}
for some $K_0\in(0,+\infty)$ independent of $n\in \mathbb{N}$,
then, going to a subsequence if necessary,
\begin{equation}\label{compact2}
 \lim_{n\to\infty}\int_{\Omega}F(u_n)\mathrm{d}x= \int_{\Omega}F(u)\mathrm{d}x~\text{\emph{for any compact set}}~\Omega\subset\R^2.
\end{equation}
Moreover, passing to a subsequence if necessary, there holds
\begin{equation}\label{compact3}
 \lim_{n\to\infty}\int_{\R^2}f(u_n)\psi\mathrm{d}x= \int_{\R^2}f(u)\psi\mathrm{d}x ~\text{\emph{for all}}~\psi\in C_0^\infty(\R^2).
\end{equation}
\end{lemma}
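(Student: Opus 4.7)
The plan is to prove both convergences by verifying the hypotheses of Vitali's convergence theorem on compact subsets, using condition $(f_3)$ together with the uniform bound $\int_{\R^2} f(u_n)u_n\,dx \leq K_0$ to establish equi-integrability. Note first that on any compact $\Omega \subset \R^2$, Lemma \ref{imbedding} gives $u_n \to u$ in $L^s(\Omega)$ for all $p \leq s < \infty$ and, after passing to a subsequence, pointwise a.e. convergence, so $F(u_n) \to F(u)$ and $f(u_n) \to f(u)$ a.e. by continuity. The only issue is therefore an upper bound on $\int_E F(u_n)$ and $\int_E f(u_n)$ uniformly in $n$ that goes to $0$ as $|E| \to 0$.

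The central observation is the following consequence of $(f_3)$: for $t > t_0$ one has
\[
F(t) \leq M_0\,\frac{f(t)}{t^\vartheta} = M_0\,\frac{f(t)\,t}{t^{1+\vartheta}}, \qquad f(t) \leq \frac{f(t)\,t}{t},
\]
so that for any threshold $\eta > t_0$,
\[
\int_{\{u_n > \eta\}} F(u_n)\,dx \leq \frac{M_0}{\eta^{1+\vartheta}} \int_{\R^2} f(u_n) u_n\,dx \leq \frac{M_0 K_0}{\eta^{1+\vartheta}}, \qquad \int_{\{u_n > \eta\}} f(u_n)\,dx \leq \frac{K_0}{\eta}.
\]
On the complementary set $\{u_n \leq \eta\}$, the continuity of $f$ (and hence $F$) yields $|f(u_n)|, |F(u_n)| \leq C_\eta$. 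Given $\varepsilon > 0$, the first step is to choose $\eta$ so large that the tail contributions are at most $\varepsilon/2$; then for any measurable $E \subset \Omega$,
\[
\int_E F(u_n)\,dx \leq C_\eta |E| + \frac{\varepsilon}{2}, \qquad \int_E f(u_n)\,dx \leq C_\eta |E| + \frac{\varepsilon}{2}.
\]
Taking $|E| < \delta \triangleq \varepsilon/(2C_\eta)$ proves equi-integrability of $\{F(u_n)\}$ and $\{f(u_n)\}$ on any compact set. Applying Vitali's theorem on $\Omega$ yields \eqref{compact2}, and applying it on $\operatorname{supp}\psi$ together with the pointwise bound $|f(u_n)\psi| \leq |\psi|_\infty |f(u_n)|$ yields \eqref{compact3} (with $u_n$ replaced by $u$ in the limit thanks to Fatou applied to $f(u_n)u_n$, which guarantees $f(u)\in L^1_{\text{loc}}$).

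The main obstacle I anticipate is subtle rather than computational: condition $(f_3)$ only controls $F$ by $f$ for $t > t_0$, and the decay factor $t^{-\vartheta}$ with $\vartheta \in [0,1)$ is \emph{not} summable in $t$. One must therefore combine it with the extra factor of $t$ coming from the energy bound $\int f(u_n) u_n \leq K_0$ rather than attempting to bound $\int f(u_n)$ directly; this is the reason for writing $F(t) \leq M_0 f(t)t/t^{1+\vartheta}$ above. The second delicate point is that $\psi$ has compact support but $f(u_n)$ does not, so the truncation-threshold argument must be localized to $\operatorname{supp}\psi$; since $\psi$ is bounded and compactly supported, this localization is automatic and the same splitting at level $\eta$ works verbatim.
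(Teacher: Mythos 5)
Your proof is correct and follows essentially the same route as the paper, which omits the details entirely and simply refers to \cite[Lemma 2.1]{Figueiredo}: that cited lemma is proved by exactly your truncation-at-level-$\eta$ argument, using $\int_{\{u_n>\eta\}}|f(u_n)|\le \eta^{-1}\int f(u_n)u_n$ on the large set and boundedness of $f$ on the small set, combined with Vitali on compact sets; your additional use of $(f_3)$ in the form $F(t)\le M_0 f(t)t\,t^{-1-\vartheta}$ to handle the primitive $F$ is the standard extension needed for \eqref{compact2}. The only (cosmetic) point worth noting is that the tail estimates implicitly use $f(t)t\ge 0$, which is consistent with the paper's conventions ($f\equiv 0$ on $(-\infty,0]$ and positivity elsewhere).
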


\begin{proof}
We can follow the essential ideas adopted in \cite[Lemma 2.1]{Figueiredo} and the details will be omitted.
\end{proof}

\section{The limit problem \eqref{mainequation2}}\label{Sec3}

The main objective of this section
is to investigate the existence of positive ground state solutions for the
CSS equation \eqref{mainequation2}
which acts as the ``limit problem" of Eq. \eqref{mainequation1}.

In order to solve Eq. \eqref{mainequation2}, we are going to look for the critical points of
its corresponding
variational functional
$J_\infty:E\to\R$ below
\begin{equation}\label{aafunctional}
{J}_\infty (u)=\frac{1}{2}\int_{\R^2} [|\nabla u|^2+(A_1^2+A_2^2)u^2 ]\mathrm{d}x+\frac{a_\infty}p \int_{\R^2} |u|^p \mathrm{d}x
- \int_{\R^2} F (u)\mathrm{d}x,~\forall u\in E.
\end{equation}
Arguing as before, it is simple to show that
$J_\infty$ is well-defined and it is of class
$\mathcal{C}^1(E,\R)$ satisfying
\[
{J}_\infty' (u)[v]= \int_{\R^2} [ \nabla u\nabla v +(A_1^2+A_2^2+A_0)uv ]\mathrm{d}x+\frac{a_\infty}p \int_{\R^2} |u|^{p-2}uv \mathrm{d}x
- \int_{\R^2} f (u)v\mathrm{d}x,~\forall v\in E.
\]
Moreover, we are derived from \eqref{gauge0} that
\[
{J}_\infty' (u)[u]= \int_{\R^2} [ |\nabla u|^2 +3(A_1^2+A_2^2 )u^2 ]\mathrm{d}x+ a_\infty\int_{\R^2}|u|^{p}\mathrm{d}x
- \int_{\R^2} f (u)u\mathrm{d}x.
\]

In what follows, we shall denote the Nehari manifold associated with $J_{\infty}$ by
$$
\mathcal{N}_{\infty}\triangleq\{u\in E\backslash\{0\}:J'_{\infty}(u)[u]=0\}
$$
and the corresponding ground state energy level on $\mathcal{N}_{\infty}$ is defined by
\begin{equation}\label{aa}
m_{\infty}\triangleq\min_{u\in \mathcal{N}_{\infty}}J_{\infty}(u).
\end{equation}

Our main result concerning the autonomous
CSS equation \eqref{mainequation2} is the following:

\begin{theorem} \label{nonautonomous1} Let $1<p<2$ and
suppose that $f$ satisfies \eqref{definition} and $(f_1)-(f_5)$, then Eq. \eqref{mainequation2} admits a
positive ground state solution $u_{\infty}\in E$ such that
$$
J_{\infty}(u_{\infty})=m_{\infty}=\inf_{u\in E\backslash\{0\}}
\max_{t\geq0}
J_{\infty}(tu ).
$$
\end{theorem}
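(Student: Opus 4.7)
My plan is to prove Theorem \ref{nonautonomous1} through a Nehari-manifold scheme combined with a translation-based concentration-compactness argument, the underlying integrability being supplied by the Trudinger-Moser inequality of Theorem \ref{maintheorem1}. First I would verify the Nehari/mountain-pass geometry of $J_\infty$: from $(f_1)$ and the standard growth bound $|F(s)| \le \varepsilon s^2 + C_\varepsilon |s|^q \Phi_{\alpha,j_0}(s)$, combined with Theorem \ref{maintheorem1} and the continuous embedding $E \hookrightarrow L^q(\R^2)$ from Lemma \ref{imbedding}, one obtains $J_\infty(u) \ge \beta > 0$ on a small sphere, while $(f_4)$ forces $J_\infty(tu) \to -\infty$ as $t \to +\infty$ for every $u \in E$ with $u \ge 0$, $u \not\equiv 0$. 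The monotonicity hypothesis $(f_2)$ is precisely tuned to the quintic scaling of the Chern-Simons term $\int_{\R^2}(A_1^2+A_2^2)u^2 dx$: for each such $u$, the fibering map $t \mapsto J_\infty(tu)$ admits a unique critical point $t_u \in (0,+\infty)$ with $t_u u \in \mathcal{N}_\infty$ realizing the supremum, which yields the minimax identity $m_\infty = \inf_{u \in E \setminus \{0\}} \max_{t \ge 0} J_\infty(tu)$.

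Next I would take a Cerami sequence $\{u_n\} \subset E$ at level $m_\infty$. Boundedness in $E$ follows from the combination $J_\infty(u_n) - \tfrac{1}{6} J_\infty'(u_n)[u_n]$: the sextic Chern-Simons term cancels, $|\nabla u_n|_2^2$ and $|u_n|_p^p$ appear with positive coefficients, and $(f_2)$ guarantees $\tfrac{1}{6} f(t) t - F(t) \ge 0$ for $t > 0$, so the nonlinear contribution is non-negative. To exclude vanishing along $\{u_n\}$ I would invoke the Lions-type lemma for the space $E$ (Theorem \ref{Vanishing} of the paper): if vanishing held, then $|u_n|_s \to 0$ for every $s > p$, and the growth estimate on $F$ combined with Theorem \ref{maintheorem1} would force $\int_{\R^2} F(u_n) dx \to 0$, contradicting $m_\infty > 0$ via the Nehari identity. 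Consequently, up to translations $v_n = u_n(\cdot + y_n)$, which remain a Cerami sequence by the autonomy of $J_\infty$, one has $v_n \rightharpoonup u_\infty \not\equiv 0$ in $E$. Lemma \ref{BLCS} then passes to the limit in the nonlocal Chern-Simons terms and Lemma \ref{compact} in the exponential nonlinearity, so $J_\infty'(u_\infty) = 0$ and $u_\infty \in \mathcal{N}_\infty$; a Fatou argument applied to $J_\infty(v_n) - \tfrac{1}{6} J_\infty'(v_n)[v_n]$ gives $J_\infty(u_\infty) \le m_\infty$, making $u_\infty$ a ground state. Testing against the negative part of $u_\infty$ together with $f \equiv 0$ on $(-\infty,0]$ yields $u_\infty \ge 0$, and the strong maximum principle upgrades this to $u_\infty > 0$.

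The step I expect to be the principal obstacle is the strict energy estimate guaranteeing that $m_\infty$ lies below the Trudinger-Moser threshold (of the order $2\pi/\alpha_0$), without which the nonlinear term may lose equiintegrability along the Cerami sequence and compactness would fail. The test-function computation is delicate in the zero-mass regime because the coercive term is only the $|u|^p$ norm with $p < 2$, so rescaled Moser functions must be truncated and normalized with particular care; hypothesis $(f_4)$, which prescribes the exact exponential rate of $F$ at infinity, is precisely what is needed to force the strict inequality. A secondary technical point is the validation of the $E$-version of the vanishing lemma itself, since the working norm combines $|\nabla u|_2$ with $|u|_p$ rather than the usual $|u|_2$, but this is the subject of Theorem \ref{Vanishing} and can be invoked directly.
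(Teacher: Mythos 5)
Your overall strategy coincides with the paper's: mountain-pass/Nehari geometry from $(f_1)$ and the growth bound, unique fiber maximum from $(f_2)$ giving $m_\infty=c_\infty=\inf_{u\neq0}\max_{t\geq0}J_\infty(tu)$, boundedness of the Cerami sequence via $J_\infty(u_n)-\tfrac16 J_\infty'(u_n)[u_n]$, exclusion of vanishing through Theorem \ref{Vanishing}, translation invariance to obtain a nontrivial weak limit, passage to the limit via Lemmas \ref{BLCS} and \ref{compact}, and a Fatou argument to identify $J_\infty(u_\infty)=m_\infty$. This is exactly the skeleton of Section \ref{Sec3}.

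The genuine gap is that the strict threshold estimate $0<c_\infty<2\pi/\alpha_0$ is only flagged as ``the principal obstacle'' and never established; without it the argument does not close, and it is in fact load-bearing in two distinct places. First, in ruling out vanishing it is not enough to show $\int_{\R^2}F(u_n)\,dx\to0$ (which by itself does not contradict $c_\infty>0$); one must also show $\int_{\R^2}f(u_n)u_n\,dx\to0$ so that $J_\infty'(u_n)[u_n]\to0$ forces $\|u_n\|\to0$ and hence $J_\infty(u_n)\to0$. That step requires $\limsup_n|\nabla u_n|_2^2<4\pi/\alpha_0$, which is precisely what the threshold estimate delivers, before Theorem \ref{maintheorem1} can be applied to the rescaled sequence $\bar u_n=\sqrt{\alpha\nu/(4\pi(1-\epsilon))}\,u_n$. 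Second, the equiintegrability you mention also rests on it. The paper's Lemma \ref{estimate} supplies the missing computation: one takes the Moser functions $\bar w_n$, normalizes $w_n=\bar w_n/(1+\sqrt[p]{\delta_n})$ so that $\|w_n\|\equiv1$ with $|\nabla w_n|_2^2\to1$, $|w_n|_p^p\to0$ and $c(w_n)\to0$, assumes $\max_{t>0}J_\infty(tw_n)\geq2\pi/\alpha_0$ for all $n$, and uses the lower bound $f(s)s\geq M_0^{-1}(\beta_0-\epsilon)s^{\vartheta+1}e^{\alpha_0 s^2}$ coming from $(f_3)$--$(f_4)$ on $B_{1/n}(0)$ to force $t_n\to t_0$ with $t_0^2\geq4\pi/\alpha_0$, which is then contradicted by a logarithmic comparison as $n\to\infty$. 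You correctly identified $(f_4)$ as the relevant hypothesis, but absent this computation the proof is incomplete rather than merely delicate.
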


The proof of the above theorem will be divided into several lemmas.
For simplicity, we shall always suppose that the nonlinearity $f$
satisfies \eqref{definition} and $(f_1)-(f_4)$ and do not mention them unless needed.

First of all, let us give some key observations on the shape
of the functional $J_\infty$.

\begin{lemma}\label{mountainpass1} Let $1<p<2$, then
there
 exists a constant $\zeta>0$ such that
\begin{equation}\label{mountainpass11}
m_\rho\triangleq \inf\big\{J_\infty(u):u\in E,\|u\| =\rho\big\}>0,~
\forall \rho\in(0,\zeta],
\end{equation}
and
\begin{equation}\label{mountainpass12}
n_\rho\triangleq \inf\big\{ J'_\infty(u)[u] :u\in E,
\|u\| =\rho\big\}>0,~
\forall \rho\in(0,\zeta].
\end{equation}
\end{lemma}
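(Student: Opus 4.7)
The plan is to bound $J_\infty(u)$ (resp.\ $J'_\infty(u)[u]$) from below by a quantity of the form $c_0\|u\|^2 - C\|u\|^q$ with $q>2$, which is strictly positive on a small sphere. The Chern--Simons contribution $\int(A_1^2+A_2^2)u^2\,\mathrm{d}x$ is non-negative (and tripled but still non-negative in $J'_\infty(u)[u]$), so it may simply be discarded. The real work lies in (a) controlling $\int F(u)\,\mathrm{d}x$ and $\int f(u)u\,\mathrm{d}x$ via the Trudinger--Moser inequality of Theorem \ref{maintheorem1}, and (b) coercing the kinetic--potential part $\tfrac12|\nabla u|_2^2+\tfrac{a_\infty}{p}|u|_p^p$ against the norm $\|u\|^2=|\nabla u|_2^2+|u|_p^2$, despite the mismatch of exponents.

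For the first point, I would fix $\alpha>\alpha_0$ and some $q>2$ and start from \eqref{growth1}--\eqref{growth2}, which yield $|F(s)|\leq\varepsilon s^2+C_\varepsilon|s|^q\Phi_{\alpha,j_0}(s)$ and $|f(s)s|\leq\varepsilon s^2+C_\varepsilon|s|^q\Phi_{\alpha,j_0}(s)$. Applying Cauchy--Schwarz and the elementary inequality $\Phi_{\alpha,j_0}(s)^2\leq\Phi_{2\alpha,j_0}(s)$ gives
\[
\int_{\R^2}|u|^q\,\Phi_{\alpha,j_0}(u)\,\mathrm{d}x\leq |u|_{2q}^{q}\left(\int_{\R^2}\Phi_{2\alpha,j_0}(u)\,\mathrm{d}x\right)^{\!\!1/2}.
\]
Now, for any $u$ with $\|u\|\leq\rho$, writing $u=\|u\|\,(u/\|u\|)$ in the series definition of $\Phi_{2\alpha,j_0}$ converts the integral to $\int\Phi_{2\alpha\|u\|^2,j_0}(u/\|u\|)\,\mathrm{d}x$. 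Choosing $\rho$ so small that $2\alpha\rho^2\leq 4\pi$, Theorem \ref{maintheorem1} gives a uniform bound $\mathbb{S}(2\alpha\|u\|^2)\leq C$. Combined with the continuous embedding $E\hookrightarrow L^{2q}(\R^2)$ from Lemma \ref{imbedding} (which applies since $2q>p$), this yields
\[
\int_{\R^2}F(u)\,\mathrm{d}x\;\leq\;\varepsilon C_1\|u\|^2+C_2\|u\|^q,
\]
and analogously for $\int f(u)u\,\mathrm{d}x$.

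For the second point -- the main obstacle -- I would note that when $\|u\|\leq 1$ one automatically has $|u|_p\leq 1$, and since $1<p<2$ this forces $|u|_p^p\geq |u|_p^2$. Therefore, shrinking $\rho$ further if necessary,
\[
\frac{1}{2}|\nabla u|_2^2+\frac{a_\infty}{p}|u|_p^p\;\geq\;\min\!\Bigl(\tfrac{1}{2},\tfrac{a_\infty}{p}\Bigr)\bigl(|\nabla u|_2^2+|u|_p^2\bigr)=c_0\|u\|^2.
\]
An identical reasoning applies to $|\nabla u|_2^2+a_\infty|u|_p^p$ appearing in $J'_\infty(u)[u]$. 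Putting these estimates together on $\|u\|=\rho$,
\[
J_\infty(u)\;\geq\;(c_0-\varepsilon C_1)\rho^2-C_2\rho^q,\qquad J'_\infty(u)[u]\;\geq\;(c_0'-\varepsilon C_1)\rho^2-C_2\rho^q.
\]
Choosing $\varepsilon$ small so that $c_0-\varepsilon C_1>0$ and $c_0'-\varepsilon C_1>0$, and then taking $\rho\in(0,\zeta]$ with $\zeta$ small enough (so that simultaneously $\zeta\leq 1$, $2\alpha\zeta^2\leq 4\pi$, and the quadratic term dominates the $\rho^q$ term since $q>2$), both infima are strictly positive. This gives the desired $\zeta$.
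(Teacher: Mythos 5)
Your proposal is correct and follows essentially the same route as the paper: discard the non-negative Chern--Simons term, control the nonlinear terms via \eqref{growth1}--\eqref{growth2} together with the Trudinger--Moser bound \eqref{maintheorem1a} and the embedding of Lemma \ref{imbedding}, and conclude from $c_0\rho^2-C\rho^q$ with $q>2$. You actually supply two details the paper leaves implicit --- the rescaling $\Phi_{2\alpha,j_0}(u)=\Phi_{2\alpha\|u\|^2,j_0}(u/\|u\|)$ needed to invoke the sup in \eqref{maintheorem1a}, and the observation that $|u|_p\leq 1$ forces $|u|_p^p\geq|u|_p^2$ so that the $L^p$ term controls the norm --- both of which are exactly what is needed to make the paper's terse estimate rigorous.
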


\begin{proof}
It follows from \eqref{growth2} that
\[
J_\infty(u)\geq \frac12\int_{\R^2} (|\nabla u|^2 +a_\infty|u|^p)\mathrm{d}x-\varepsilon\int_{\R^2}  |u|^2dx
 -C(\varepsilon,q,\alpha)\int_{\R^2}|u|^{q}\Phi_{\alpha,j_0}(u)dx.
\]
Using Lemma \ref{imbedding} and letting $\varepsilon>0$ be suitably small, with the help of \eqref{maintheorem1a},
 there exists a constant $\overline{\zeta}\in(0,1)$ such that
\[
J_\infty(u)\geq  \frac{1}{4} \|u\|^2
 -C\|u\|^q~\text{when}~\|u\|\leq \overline{\zeta}.
\]
In light of $q>2$, we can determine a constant $\zeta\in(0, \overline{\zeta})$ such that
 \eqref{mountainpass11} holds true. According to the definition of $J'_{\infty}$, it is easy to
reach \eqref{mountainpass12} as before. The proof is completed.
\end{proof}

\begin{lemma}\label{mountainpass2} Let $1<p<2$ and
suppose that $u\in E\backslash\{0\}$, then for all
 $t>0$, there holds
\[
J_\infty(tu)\to-\infty~ \text{as}~ t\to+\infty.
\]
In particular, the functional $J_\infty$ is not bounded from below.
\end{lemma}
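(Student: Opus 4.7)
The plan is to exploit the $t^2$-scaling of the Chern–Simons gauge fields combined with the exponential lower bound on $F$ coming from $(f_4)$. From the explicit representations \eqref{CSS2e}, the map $u \mapsto A_j[u]$ is $2$-homogeneous, so $A_j[tu] = t^2 A_j[u]$ for $j = 1, 2$ and any $t > 0$. Consequently, the Chern–Simons contribution to $J_\infty(tu)$ scales as $t^6$, and one has
\[
J_\infty(tu) = \frac{t^2}{2}\int_{\R^2}|\nabla u|^2\,dx + \frac{t^6}{2}\int_{\R^2}(A_1^2+A_2^2)u^2\,dx + \frac{a_\infty t^p}{p}\int_{\R^2}|u|^p\,dx - \int_{\R^2} F(tu)\,dx.
\]
Thus the three positive contributions grow at most polynomially in $t$, with dominant rate $t^6$.

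Next, since $u \not\equiv 0$ and, by $(f_1)$, $F$ vanishes on $(-\infty,0]$, we may assume without loss of generality that $u^+ \not\equiv 0$ (otherwise we replace $u$ by $u^+ \in E$, which leaves the argument unchanged and suffices to prove the ``in particular'' statement that $J_\infty$ is unbounded below). Therefore there exist a measurable set $\Omega \subset \R^2$ with $0 < |\Omega| < \infty$ and a constant $\delta > 0$ such that $u(x) \geq \delta$ for all $x \in \Omega$. By $(f_4)$, we can pick $s_0 > 0$ so that
\[
F(s) \geq \tfrac{\beta_0}{2}\,e^{\alpha_0 s^2}, \qquad \forall\, s \geq s_0.
\]
Taking $t \geq s_0/\delta$, we then estimate
\[
\int_{\R^2} F(tu)\,dx \;\geq\; \int_{\Omega} F(tu(x))\,dx \;\geq\; \frac{\beta_0}{2}\int_{\Omega} e^{\alpha_0 t^2 u^2(x)}\,dx \;\geq\; \frac{\beta_0\,|\Omega|}{2}\,e^{\alpha_0 \delta^2 t^2}.
\]

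Combining the two displays, we obtain for all $t$ sufficiently large
\[
J_\infty(tu) \;\leq\; C_1 t^6 + C_2 t^2 + C_3 t^p - \tfrac{\beta_0\,|\Omega|}{2}\,e^{\alpha_0 \delta^2 t^2},
\]
for suitable constants $C_1,C_2,C_3 > 0$ depending only on $u$. Since the exponential $e^{\alpha_0 \delta^2 t^2}$ dominates every polynomial in $t$, we conclude $J_\infty(tu) \to -\infty$ as $t \to +\infty$; in particular $\inf_E J_\infty = -\infty$.

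There is no serious obstacle here: the proof is essentially a scaling and comparison argument. The only subtle point is that the critical exponential lower bound in $(f_4)$ must be used on a set where $u$ is uniformly bounded below by a positive constant — which is why we reduce to the case $u^+ \not\equiv 0$ and exploit the superlevel sets of $u^+$ guaranteed by Chebyshev's inequality applied to, say, $|u^+|_p^p$.
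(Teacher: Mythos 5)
Your proposal is correct and follows essentially the same strategy as the paper: both proofs rest on the observation that the gauge term scales like $t^6$ while $(f_4)$ forces $F(ts)$ to outgrow every power of $t$, the only difference being that the paper divides by $t^6$ and invokes Fatou's lemma, whereas you derive an explicit exponential lower bound for $\int_{\R^2}F(tu)\,dx$ on a superlevel set of $u$. Note that both arguments genuinely require $u^{+}\not\equiv 0$ (the paper silently restricts to positive $u$, and your ``without loss of generality'' reduction fails if $u\le 0$ a.e., since then $F(tu)\equiv 0$ and $J_\infty(tu)\ge 0$); this is a shared caveat of the statement rather than a defect of your proof.
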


\begin{proof}
For any fixed positive function $u\in E\backslash\{0\}$ and $t>1$, we have that
\[
\frac{J_\infty(tu)}{t^6}\leq\frac1p\int_{\R^2} [|\nabla u|^2+(A_1^2+A_2^2)u^2+a_\infty|u|^p ]\mathrm{d}x
- \frac{1}{ t^6}\int_{\R^2}F(t  u  ) dx.
\]
Due to $(f_4)$, one sees that $F(t)t^{-6}\to+\infty$ as $t\to+\infty$.
Thereby, using the Fatou's lemma,
we arrive at $J_\infty(tu)/t^6\to-\infty$ as $t\to+\infty$, and the claim follows.
\end{proof}

Relying on Lemmas \ref{mountainpass1} and \ref{mountainpass2},
we shall exploit the following critical point theorem
without the $(C)$ condition introduced in \cite{MW}
to find a $(C)$ sequence for $J_\infty$.

\begin{proposition}\label{mp}
Let $X$ be a Banach space and $\varphi\in \mathcal{C}^1(X,\R)$ Gateaux
differentiable for all $v\in X$, with G-derivative $\varphi^\prime(v)\in X^{-1}$ continuous from the norm
topology of $X$ to the weak $*$ topology of $X^{-1}$ and $\varphi(0) = 0$. Let $S$ be a closed subset of $X$ which
disconnects (archwise) $X$. Let $v_0 = 0$ and $v_1\in X$ be points belonging to distinct connected
components of $\bar{X}\backslash X$. Suppose that
\[
\inf_{S}\varphi\geq \varrho>0~\text{and}~\varphi(v_1)\leq0
\]
and let $\Gamma=\{\gamma\in C([0,1],X):\gamma(0)~\text{and}~\gamma(1)=v_1\}$. Then
$$c=\inf_{\gamma\in\Gamma}\max_{t\in[0,1]}\varphi(\gamma(t))\geq\varrho>0$$ and there is a $(C)_c$
sequence for $\varphi$.
\end{proposition}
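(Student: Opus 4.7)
The plan is to prove Proposition \ref{mp} in two separate stages: first establishing the lower bound $c \geq \varrho > 0$ by a purely topological argument, and then constructing the Cerami sequence by combining Ekeland's variational principle on the path space $\Gamma$ with a weighted pseudo-gradient deformation.

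For the lower bound, the idea is straightforward. By hypothesis, $S$ disconnects $X$ arcwise and $v_0 = 0$, $v_1$ lie in distinct connected components of $X \setminus S$. Any path $\gamma \in \Gamma$ is a continuous map $[0,1] \to X$ with $\gamma(0) = 0$ and $\gamma(1) = v_1$, so $\gamma([0,1])$ is connected and must meet $S$; i.e.\ there exists $t_\gamma \in (0,1)$ with $\gamma(t_\gamma) \in S$. Since $\inf_S \varphi \geq \varrho$, this forces $\max_{t \in [0,1]} \varphi(\gamma(t)) \geq \varphi(\gamma(t_\gamma)) \geq \varrho$, and taking the infimum over $\Gamma$ yields $c \geq \varrho > 0$.

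For the existence of the Cerami sequence, I would equip $\Gamma$ with the uniform metric $d(\gamma_1,\gamma_2) = \max_{t \in [0,1]} \|\gamma_1(t) - \gamma_2(t)\|_X$, which makes $(\Gamma, d)$ a complete metric space. Define $\Psi : \Gamma \to \mathbb{R}$ by $\Psi(\gamma) = \max_{t \in [0,1]} \varphi(\gamma(t))$; this functional is lower semicontinuous (in fact continuous) and bounded below by $c$. Apply Ekeland's variational principle: for each $n$ there exists $\gamma_n \in \Gamma$ with $\Psi(\gamma_n) \leq c + 1/n$ and $\Psi(\gamma) \geq \Psi(\gamma_n) - (1/n) d(\gamma,\gamma_n)$ for all $\gamma \in \Gamma$. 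The task then is to show that the set $M_n = \{t \in [0,1] : \varphi(\gamma_n(t)) = \Psi(\gamma_n)\}$ contains some $t_n$ with $x_n \triangleq \gamma_n(t_n)$ satisfying $\varphi(x_n) \to c$ and $(1 + \|x_n\|)\,\|\varphi'(x_n)\|_{X^{-1}} \to 0$. The standard way is by contradiction: if not, one produces a locally Lipschitz \emph{weighted} pseudo-gradient vector field $V$ for $\varphi$ with $\|V(x)\|_X \leq 2(1 + \|x\|)$ and $\langle \varphi'(x), V(x)\rangle \geq (1/2)(1+\|x\|)\,\|\varphi'(x)\|_{X^{-1}}$ on a neighborhood of the putative bad level set, then deforms $\gamma_n$ by pushing it slightly in the direction $-V$ on the maximum set; the weighting $(1+\|x\|)$ in $V$ is precisely what converts the Ekeland penalty $(1/n)d(\gamma,\gamma_n)$ into the Cerami factor $(1+\|x_n\|)\|\varphi'(x_n)\|$, thereby contradicting the Ekeland estimate.

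The main technical obstacle is exactly the construction of the weighted pseudo-gradient deformation and verifying that the perturbed path still lies in $\Gamma$ (i.e.\ continues to satisfy $\gamma(0) = 0$, $\gamma(1) = v_1$). The first issue is handled by multiplying the vector field by a cutoff that vanishes near any point where $\varphi(\gamma_n(t))$ is much smaller than $\Psi(\gamma_n)$ (using $\varphi(0) = 0 < c$ and $\varphi(v_1) \leq 0 < c$ so that the endpoints automatically lie in the cutoff region); the Gâteaux-differentiability with norm-to-weak-$\ast$ continuous derivative is exactly the regularity needed to construct such a pseudo-gradient in a Banach (not necessarily Hilbert) space. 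Once this deformation machinery is in place, the extraction of $(x_n)$ with the Cerami property, and the bound $\varphi(x_n) \to c$, follow by a routine argument and complete the proof.
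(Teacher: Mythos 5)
The paper does not actually prove this proposition: it is quoted verbatim as a known critical point theorem from Mawhin--Willem \cite{MW} and used as a black box to produce the $(C)_{c}$ sequence in \eqref{sequence1}. Your proposal therefore cannot be compared line-by-line with an in-paper argument; what it does is reconstruct the standard proof, and the reconstruction is the right one. The lower bound $c\geq\varrho$ is correct and complete as written: every $\gamma\in\Gamma$ has connected image joining the two components of $X\setminus S$ (the paper's ``$\bar{X}\backslash X$'' is a typo for $X\setminus S$, which you correctly read through), hence meets $S$, where $\varphi\geq\varrho$. The second half --- Ekeland's variational principle on $(\Gamma,d)$ with the uniform metric, lower semicontinuity of $\gamma\mapsto\max_{t}\varphi(\gamma(t))$, and a deformation along a pseudo-gradient field weighted by $(1+\|x\|)$ so that the Ekeland penalty $\tfrac1n d(\gamma,\gamma_n)$ translates into the Cerami smallness $(1+\|x_n\|)\|\varphi'(x_n)\|_{X^{-1}}\to0$ --- is exactly the classical Cerami-type minimax argument, and your remark that the norm-to-weak-$*$ continuity of the G\^ateaux derivative is precisely what permits the pseudo-gradient construction in a Banach space is on point. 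The one place where your write-up is a sketch rather than a proof is the deformation step itself: you assert, but do not carry out, the construction of the cutoff localizing the vector field near the maximum set $M_n$, the verification that the perturbed path remains in $\Gamma$, and the quantitative decrease of the maximum that contradicts the Ekeland inequality. These are standard but not trivial (in particular one must check that $M_n$ stays away from $t=0,1$ using $\varphi(0)=0<c$ and $\varphi(v_1)\leq0<c$, as you indicate, and one must handle the fact that the maximum may be attained on a whole compact subset of $[0,1]$ rather than at a single point). Since the paper itself defers entirely to \cite{MW}, filling in those details from the reference, or citing the precise theorem there, would be the appropriate way to close your argument.
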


Combining Lemmas \ref{mountainpass1} and \ref{mountainpass2} as well as Proposition
\ref{mp}, there is a sequence $\{u_n\}\subset E$ such that
\begin{equation}\label{sequence1}
  J_\infty(u_n)\to c_\infty~\text{and}~
(1+\|u_n\| )\|J_\infty^\prime(u_n)\|_{E^{-1}}\to0,
\end{equation}
 where
\begin{equation}\label{sequence2}
c_\infty\triangleq\inf_{\gamma\in\Gamma_{\infty}}\max_{t\in[0,1]}J_ \infty(\gamma(t))>0
\end{equation}
with $\Gamma_\infty=\{\gamma\in \mathcal{C}([0,1],E):\gamma(0)=0~\text{and}~J_\infty(\gamma(1))<0\}$.

\begin{lemma}\label{unique}
Let $1<p<2$,
then for every $u\in E\backslash\{0\}$, there exists a unique constant $t_u>0$
such that $J_\infty(t_uu)=\max\limits_{t\geq0}J_\infty(tu)$ and $t_uu\in \mathcal{N}_\infty$.
In particular, we could conclude that
$c_\infty=m_\infty=d_\infty$, where
$$
d_\infty\triangleq\inf_{u\in E\backslash\{0\}}
\max_{t\geq0}
J_\infty(tu ).
$$
\end{lemma}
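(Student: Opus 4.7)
The plan is to reduce the statement to the study of a single real variable function and to exploit the crucial monotonicity condition $(f_2)$. For $u\in E\setminus\{0\}$ fixed and $t>0$, using the scaling $A_j[tu]=t^{2}A_j[u]$ for $j=1,2$, I would write
\[
J_\infty(tu)=\frac{t^{2}}{2}\int_{\R^2}|\nabla u|^{2}dx+\frac{t^{6}}{2}\int_{\R^2}(A_1^{2}+A_2^{2})u^{2}dx+\frac{a_\infty t^{p}}{p}\int_{\R^2}|u|^{p}dx-\int_{\R^2}F(tu)dx.
\]
Since $\frac{d}{dt}J_\infty(tu)=J_\infty'(tu)[u]=\frac{1}{t}J_\infty'(tu)[tu]$ for $t>0$, locating the critical points of $t\mapsto J_\infty(tu)$ on $(0,\infty)$ is the same as locating the zeros of $\psi_u(t)\triangleq t^{-6}J_\infty'(tu)[tu]$. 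Rearranging,
\[
\psi_u(t)=\frac{1}{t^{4}}\int_{\R^2}|\nabla u|^{2}dx+3\int_{\R^2}(A_1^{2}+A_2^{2})u^{2}dx+\frac{a_\infty}{t^{6-p}}\int_{\R^2}|u|^{p}dx-\int_{\R^2}\frac{f(tu)}{(tu)^{5}}u^{6}dx,
\]
where the last integrand is understood to vanish on $\{u\leq 0\}$ by $(f_1)$.

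The first key step is to show that $\psi_u$ is strictly decreasing. Both $t^{-4}$ and $t^{-(6-p)}$ are strictly decreasing since $p<2<6$, and $(f_2)$ ensures that for each $x$ with $u(x)>0$ the integrand $f(tu(x))/(tu(x))^{5}\,u(x)^{6}$ is strictly increasing in $t$. The second key step is to describe the limits at the endpoints. As $t\to 0^{+}$ the term $\frac{a_\infty}{t^{6-p}}\int |u|^{p}dx$ forces $\psi_u(t)\to+\infty$ (note $\int |u|^p dx>0$ since $u\neq 0$ in $E$). As $t\to+\infty$, the asymptotic relation $F(s)\sim\beta_0 e^{\alpha_0 s^{2}}$ from $(f_4)$ together with $(f_2)$ yields $f(s)/s^{5}\to+\infty$, so by Fatou's lemma $\int f(tu)/(tu)^{5}\,u^{6}dx\to+\infty$ on $\{u>0\}$, hence $\psi_u(t)\to-\infty$. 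Combined with continuity, this produces a unique $t_u>0$ with $\psi_u(t_u)=0$, i.e.\ $t_u u\in\mathcal N_\infty$. Since $J_\infty(tu)$ has $t_u$ as its unique positive critical point, vanishes at $t=0$, is positive for $t$ slightly positive by Lemma \ref{mountainpass1}, and tends to $-\infty$ by Lemma \ref{mountainpass2}, $t_u$ is necessarily a strict global maximum on $[0,\infty)$.

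From the projection $u\mapsto t_u u$ onto $\mathcal N_\infty$, the identity $d_\infty=m_\infty$ follows immediately: $d_\infty=\inf_{u\neq 0} J_\infty(t_u u)\geq m_\infty$ since $t_u u\in\mathcal N_\infty$, while for $u\in\mathcal N_\infty$ uniqueness gives $t_u=1$ so $J_\infty(u)=\max_{t\geq 0}J_\infty(tu)\geq d_\infty$, yielding $m_\infty\geq d_\infty$. For $c_\infty\leq d_\infty$, pick any $u\neq 0$ and take $T>t_u$ large enough that $J_\infty(Tu)<0$ (possible by Lemma \ref{mountainpass2}); the path $\gamma(s)=sTu$ lies in $\Gamma_\infty$ and satisfies $\max_s J_\infty(\gamma(s))=\max_{t\geq 0}J_\infty(tu)$.

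The remaining and most delicate step is $c_\infty\geq m_\infty$: I need to show that every $\gamma\in\Gamma_\infty$ meets $\mathcal N_\infty$. Let $v=\gamma(1)$; since $J_\infty(v)<0$ while $\max_{t\geq 0}J_\infty(tv)>0$ and is attained at the unique $t_v$, we must have $1>t_v$, whence $J_\infty'(v)[v]=\psi_v(1)<0$. On the other hand, by continuity of $\gamma$ and Lemma \ref{mountainpass1} (estimate \eqref{mountainpass12}), $J_\infty'(\gamma(s))[\gamma(s)]>0$ for all small $s>0$ with $\|\gamma(s)\|\leq\zeta$. The map $s\mapsto J_\infty'(\gamma(s))[\gamma(s)]$ is continuous on $[0,1]$, so by the intermediate value theorem it vanishes at some $s^{\ast}\in(0,1)$; one just has to check that $\gamma(s^{\ast})\neq 0$, which follows by choosing $s^{\ast}$ as the first zero after the initial region $\|\gamma(s)\|\leq\zeta$, so that $\|\gamma(s^{\ast})\|\geq\zeta$. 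Then $\gamma(s^{\ast})\in\mathcal N_\infty$ and $\max_s J_\infty(\gamma(s))\geq J_\infty(\gamma(s^{\ast}))\geq m_\infty$, giving $c_\infty\geq m_\infty$. The hardest point in the whole argument is verifying the $t\to+\infty$ limit of $\psi_u$ rigorously via $(f_4)$ and $(f_2)$, and pinning down the intersection argument so as to avoid the possibility that $\gamma$ returns to the origin before exiting the ball of radius $\zeta$.
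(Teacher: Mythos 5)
Your argument is correct and reaches the same conclusions, but the two central steps are handled by genuinely different mechanisms than in the paper. For uniqueness, the paper argues by contradiction: assuming two critical points $t_1\neq t_2$ of $t\mapsto J_\infty(tu)$, it writes down the two identities for ${J}_\infty(t_iu)-{J}_\infty(t_ju)-\frac{t_i^6-t_j^6}{6t_i^6}{J}'_\infty(t_iu)[t_iu]$ and combines them with the sign information supplied by $(f_2)$ to produce a contradiction. You instead factor out $t^6$ from the Nehari derivative and observe that $\psi_u(t)=t^{-6}J_\infty'(tu)[tu]$ is strictly decreasing, since $t^{-4}$ and $t^{p-6}$ are strictly decreasing with positive coefficients, the Chern--Simons term is constant after this normalization, and $(f_2)$ makes the subtracted term non-decreasing; this is cleaner and gives existence and uniqueness in one stroke. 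Likewise, for $d_\infty=m_\infty$ the paper proves only $d_\infty\leq m_\infty$ via the identity \eqref{Nehari1} and the auxiliary function $\tau(s,t)$, whereas you get the full equality immediately from the projection $u\mapsto t_uu$ together with $t_u=1$ for $u\in\mathcal N_\infty$; your route buys brevity, the paper's buys the quantitative inequality $J_\infty(u)\geq J_\infty(tu)$ on $\mathcal N_\infty$, which is reused elsewhere. Three small points you should tighten: (i) the limit $\psi_u(t)\to-\infty$ needs $|\{u>0\}|>0$ (otherwise $f(tu)\equiv 0$ and no $t_u$ exists); this hypothesis is implicit in the paper as well, but you should state it and note that for $\gamma(1)$ it is automatic since $J_\infty(\gamma(1))<0$; (ii) at $t\to0^+$ you must also check that $t^{-6}\int f(tu)tu\,dx$ does not overwhelm $a_\infty t^{p-6}|u|_p^p$, which follows from \eqref{growth1} with $q>6$ (or simply from strict monotonicity of $\psi_u$ plus $\psi_u>0$ at one point via \eqref{mountainpass12}); (iii) in the intersection argument, take $s^\ast$ after the \emph{last} exit time $s_2=\sup\{s:\|\gamma(s)\|\leq\zeta\}$ rather than the ``first zero after the initial region,'' since the path could re-enter the ball or revisit the origin; with $s_2$ you get $\|\gamma(s)\|>\zeta$ on $(s_2,1]$ and the intermediate value theorem applies cleanly. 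The paper's corresponding step simply invokes the separation argument of Willem, so your version, once repaired as in (iii), is actually the more explicit one.
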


\begin{proof}
For any $u\in E\backslash\{0\}$ and $t>0$, we define
$\xi(t)=J_\infty(tu)$ and so
\begin{align*}
  \xi'(t)=0 &\Longleftrightarrow
\int_{\R^2} [ t|\nabla u|^2 +3t^5(A_1^2+A_2^2 )u^2 ]\mathrm{d}x+ a_\infty t^{p-1}\int_{\R^2}  |u|^{p}\mathrm{d}x
- \int_{\R^2} f (tu)u\mathrm{d}x=0
 \\
    &   \Longleftrightarrow J'_\infty(tu)[tu]/t=0 \Longleftrightarrow J'_\infty(tu)[tu]=0 \Longleftrightarrow tu\in \mathcal{N}_\infty.
\end{align*}
Proceeding as in the proofs of Lemmas \ref{mountainpass1} and \ref{mountainpass2}, $\xi(t)$ possesses a critical point which corresponds to its
maximum, that is, there exists a constant $t_u> 0$ such that $\xi'(t_u)=0$. Next, we verify that $t_u$ is unique.
Arguing it indirectly, we would assume that there exist two constants $t_1, t_2 > 0$ with $t_1 \neq t_2$ such that $u_{t_i}\in \mathcal{N}_{\infty}$ for
$i \in\{1, 2\}$.
It follows from some elementary computations that
$$\begin{gathered}
{J}_\infty (t_1u)-{J}_\infty (t_2u)-\frac{t_1^6-t_2^6}{6t_1^6}{J}'_\infty (t_1u)[t_1u]\hfill\\
\ \ \ \  =\frac{t_1^2}{6}\left[2-3\left(\frac{t_2}{t_1}\right)^2+\left(\frac{t_2}{t_1}\right)^6\right]\int_{\R^2} |\nabla u|^2\mathrm{d}x\hfill\\
\ \ \ \ \ \   +\frac{t_1^p}{6p}\left[(6-p)-6\left(\frac{t_2}{t_1}\right)^p+p\left(\frac{t_2}{t_1}\right)^6\right]a_\infty\int_{\R^2} |u|^p\mathrm{d}x\hfill\\
\ \ \ \ \ \    +\int_{\R^2}\left[\frac{1-(t_1^{-1}t_2)^6}{6}f(t_1u)t_1u-F(t_1u)+F((t_1^{-1}t_2)t_1u)\right]\mathrm{d}x\hfill\\
\end{gathered}$$
and
$$\begin{gathered}
{J}_\infty (t_2u)-{J}_\infty (t_1u)-\frac{t_2^6-t_1^6}{6t_2^6}{J}'_\infty (t_2u)[t_2u]\hfill\\
\ \ \ \  =\frac{t_2^2}{6}\left[2-3\left(\frac{t_1}{t_2}\right)^2+\left(\frac{t_1}{t_2}\right)^6\right]\int_{\R^2} |\nabla u|^2\mathrm{d}x\hfill\\
\ \ \ \ \ \   +\frac{t_2^p}{6p}\left[(6-p)-6\left(\frac{t_1}{t_2}\right)^p+p\left(\frac{t_1}{t_2}\right)^6\right]a_\infty\int_{\R^2} |u|^p\mathrm{d}x\hfill\\
\ \ \ \ \ \    +\int_{\R^2}\left[\frac{1-(t_2^{-1}t_1)^6}{6}f(t_2u)t_2u-F(t_2u)+F((t_2^{-1}t_1)t_2u)\right]\mathrm{d}x.\hfill\\
\end{gathered}$$
Combining the above two formulas and ${J}'_\infty (t_iu)[t_iu]=0$ for $i\in\{1,2\}$, we arrive at
a contradiction if $t_1\neq t_2$. Next,
to coincide the three numbers with each other,
we shall firstly conclude that $c_{\infty}\leq d_{\infty}$, then $d_{\infty }\leq m_{\infty}$ and finally $m_{\infty}\leq c_{\infty}$
step by step.

\underline{\textbf{(1). $c_{\infty}\leq d_{\infty}$.}}
In view of Lemma \ref{mountainpass2}, there is a sufficiently large $t_0>0$ such that $J_{\infty}(t_0u)<0$
for every $u\in E\backslash\{0\}$. Define $\gamma_0(t)=tt_0u\in \Gamma_{\infty}$,
then $c_{\infty}\leq \max_{t\in[0,1]}J_{\infty}(tt_0u)\leq \max_{t\geq0}J_{\infty}(tu)$
which implies that $c_{\infty}\leq d_{\infty}$.

\underline{\textbf{(2). $d_{\infty}\leq m_{\infty}$.}}
We claim that, for all $u\in E$ and $t>0$, there holds
\begin{equation}\label{Nehari1}
\begin{gathered}
{J}_\infty ( u)-{J}_\infty (t u)-\frac{1-t ^6}{6 }{J}'_\infty ( u)[ u]
 =\frac{1}{6}\left(2-3 t^2+t^6\right)\int_{\R^2} |\nabla u|^2\mathrm{d}x\hfill\\
\ \ \ \ \ \   +\frac{a_\infty}{6p}\left[(6-p)-6t^p+pt^6\right]\int_{\R^2} |u|^p\mathrm{d}x
    +\int_{\R^2}\left[\frac{1-t^6}{6}f( u) u-F( u)+F( tu)\right]\mathrm{d}x.\hfill\\
\end{gathered}
\end{equation}
Due to $(f_2)$, it suffices to verify that
$$
\tau(s,t)\triangleq \frac{1-t^6}{6} f(s)s+ F(st)- F(s)\geq0 \quad
\text{for all}~s\geq0~\text{and}~t>0.
$$
Indeed, it is simple to calculate that
\begin{align*}
 \frac{\partial}{\partial t}\tau(s,t) &=f(st)s-t^{5}f(s)s
=t^{5}s^6\left[ \frac{f(st)}{(st)^5}-\frac{f(s)}{s^5} \right]  \\
   & \ \
\left\{
  \begin{array}{ll}
    \geq0, & \text{if}~t\in[1,+\infty), \\
    \leq0, & \text{if}~t\in(0,1],
  \end{array}
\right.
\end{align*}
where we have used $(f_2)$ in the last inequalities.
Therefore, we obtain that $t \mapsto\tau(s,t)$ is decreasing in
$(0, 1)$ and increasing in $(1, +\infty)$ for all $s\geq0$, respectively. It has that $\tau(s,t)\geq \min_{t\in(0,+\infty)} \tau(s,t)=\tau(s,1)=0$
  for every $s\geq0$ and the claim concludes. Owing to the claim, for all $u\in \mathcal{N}_{\infty}$, one sees that
 ${J}_\infty(u)\geq {J}_\infty(tu)$ for all $t>0$ yielding that $d_{\infty}\leq m_{\infty}$.

\underline{\textbf{(3). $m_{\infty}\leq c_{\infty}$.}}
We follow \cite[Theorem 4.2]{Willem} and present the details for the sake of completeness.
The manifold $\mathcal{N}_{\infty}$ clearly separates $E$
into two components, we say them by $\{J_{\infty}>0\}$ and $\{J_{\infty}<0\}$, respectively.
According to Lemma \ref{mountainpass1}, one can conclude that $\{J_{\infty}>0\}$ contains the origin and a small ball around the origin.
Moreover, adopting $(f_2)$, $J_{\infty}(u)\geq J_{\infty}(u)-\frac16J'_{\infty}(u)[u]\geq0$  for all $u\in \{J_{\infty}>0\}$,
so one must have that $\gamma(1)\in \{J_{\infty}<0\}$  for all $\gamma\in \Gamma_{\infty}$.
Because $\gamma\in \mathcal{C}^0$, there exists a $t_0\in(0,1)$ such that $\gamma(t_0)\in \mathcal{N}_{\infty}$
showing that $m_{\infty}\leq c_{\infty}$ by the arbitrariness of $\gamma$.
The proof is completed.
 \end{proof}

Since the nonlinearity $f$ fulfills the critical exponential growth at infinity which leads to the lack of compactness,
to recover it, we have to pull the mountain-pass level $c_\infty$ down below a critical value.
Have this aim in mind, inspired by \cite{AYA,AY,Cao,Figueiredo,DSD,Lu2012,Yang0},
we will consider the Moser
sequence functions defined by
\[
\bar{w}_n (x)\triangleq \frac{1}{ \sqrt{2\pi}}
\left\{
  \begin{array}{ll}
    \sqrt{\log n}, & \text{if}~0\leq |x|\leq \frac{1}{n}, \vspace{2mm}\\
    \frac{\log(\frac{1}{|x|})}{  \sqrt{\log n} }, & \text{if}~ \frac{1}{n}
    <|x|\leq 1, \vspace{2mm}\\
    0, & \text{if}~|x|>1,
  \end{array}
\right.
\]

\begin{lemma}\label{estimate}
Let $1<p<2$, then $0<c_\infty<\frac{2\pi}{\alpha_0}$.
\end{lemma}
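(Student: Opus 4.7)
The lower bound $c_\infty>0$ is a direct consequence of Lemma~\ref{mountainpass1}: every path $\gamma\in\Gamma_\infty$ starts at $0$ and ends where $J_\infty<0$, hence by continuity crosses the sphere $\{\|u\|=\zeta\}$ on which $J_\infty\geq m_\zeta>0$.

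For the strict upper bound $c_\infty<2\pi/\alpha_0$, the plan is to exploit the variational characterization $c_\infty=\inf_{u\neq 0}\max_{t\geq 0}J_\infty(tu)$ supplied by Lemma~\ref{unique} and to use the Moser sequence $\{\bar w_n\}$ as a test family. I would argue by contradiction, supposing that $\max_{t\geq 0}J_\infty(t\bar w_n)\geq 2\pi/\alpha_0$ for every $n$, with $t_n>0$ the unique maximizer of $t\mapsto J_\infty(t\bar w_n)$.

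A first step collects the standard asymptotics for the Moser profile: direct computation yields $|\nabla\bar w_n|_2^2=1$ exactly, while $|\bar w_n|_q^q=O((\log n)^{-q/2})$ for every $q>0$. In particular $\|\bar w_n\|\to 1$, $|\bar w_n|_p\to 0$, and Lemma~\ref{gauge} combined with the decay of $|\bar w_n|_{2r}$ and $|\bar w_n|_{r/(r-1)}$ forces the Chern--Simons contribution $\mathcal A_n:=\int_{\R^2}(A_1^2+A_2^2)[\bar w_n]\,\bar w_n^2\,dx\to 0$. Granted these facts, together with $F\geq 0$ (which follows from $(f_1)$--$(f_4)$), the hypothesis $J_\infty(t_n\bar w_n)\geq 2\pi/\alpha_0$ and the Euler--Lagrange identity $J'_\infty(t_n\bar w_n)[t_n\bar w_n]=0$ reduce to
$$
\frac{2\pi}{\alpha_0}\leq\frac{t_n^2}{2}+o(1)\bigl(t_n^6+t_n^p\bigr),\qquad t_n^2+o(1)\bigl(t_n^6+t_n^p\bigr)=\int_{\R^2}f(t_n\bar w_n)\,t_n\bar w_n\,dx.
$$
Using $(f_3)$--$(f_4)$ to rule out $t_n\to+\infty$ (otherwise the concentration of $\bar w_n$ on $B_{1/n}(0)$ would force $\int F(t_n\bar w_n)\to+\infty$ and thus $J_\infty(t_n\bar w_n)\to-\infty$) and combining the two displayed estimates, I would deduce that $\{t_n\}$ is bounded and $t_n^2\to 4\pi/\alpha_0$.

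In the final step I would quantify $\int_{\R^2}F(t_n\bar w_n)\,dx$ at this borderline scaling. By $(f_4)$, for any $\beta<\beta_0$ one has $F(s)\geq\beta e^{\alpha_0 s^2}$ for $s$ large, so on $B_{1/n}(0)$ where $\bar w_n\equiv\sqrt{\log n/(2\pi)}$,
$$
\int_{B_{1/n}(0)}F(t_n\bar w_n)\,dx\;\geq\;\beta\,\frac{\pi}{n^2}\,n^{\alpha_0 t_n^2/(2\pi)}\;\longrightarrow\;\pi\beta,
$$
with an analogous positive contribution from the annulus $\{1/n<|x|<1\}$ obtained via the substitution $|x|=e^{-s}$. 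Plugging this strictly positive limit into $J_\infty(t_n\bar w_n)\leq \tfrac{t_n^2}{2}-\int F(t_n\bar w_n)+o(1)$ yields $2\pi/\alpha_0\leq 2\pi/\alpha_0-2\pi\beta_0<2\pi/\alpha_0$, the desired contradiction. The main obstacle is precisely this borderline analysis: the exponent $\alpha_0 t_n^2/(2\pi)$ converges to exactly $2$, so the concentrated integral $\int_{B_{1/n}}F(t_n\bar w_n)$ sits on the edge between a finite positive limit and blow-up; pinning down both the matching upper bound $\limsup t_n^2\leq 4\pi/\alpha_0$ (which requires controlling $f(t_n\bar w_n)\,t_n\bar w_n$ from above via the critical exponential growth and $(f_2)$) and the precise positive constant $\pi\beta_0$ from the Moser annulus will be the most delicate pieces of the argument.
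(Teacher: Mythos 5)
Your lower bound is exactly the paper's, and the first part of your upper-bound argument --- Moser functions, the contradiction hypothesis $\max_{t>0}J_\infty(t\bar w_n)\ge 2\pi/\alpha_0$, boundedness of $\{t_n\}$ via the Nehari identity, and $\liminf t_n^2\ge 4\pi/\alpha_0$ from $F\ge 0$ --- also matches the paper (which works with the normalization $w_n=\bar w_n/(1+\sqrt[p]{\delta_n})$, a cosmetic difference). The gap is in your last step. All the energy inequality gives you is $t_n^2\ge \frac{4\pi}{\alpha_0}-\varepsilon_n$ with an uncontrolled $\varepsilon_n\to 0$ (coming from $|w_n|_p^p$ and the Chern--Simons term), so the concentrated integral
\[
\int_{B_{1/n}(0)}F(t_n\bar w_n)\,dx\ \ge\ \beta\pi\, e^{\left(\alpha_0 t_n^2/(2\pi)-2\right)\log n}
\]
has an exponent bounded below only by $-\tfrac{\alpha_0}{2\pi}\varepsilon_n\log n$, which may perfectly well tend to $-\infty$. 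Hence the claimed positive limit $\pi\beta$ --- and with it the contradiction $2\pi/\alpha_0\le 2\pi/\alpha_0-2\pi\beta_0$ --- does not follow. This is not a removable technicality: with only $(f_4)$ and an arbitrary $\beta_0>0$, a pure energy comparison at the exact Moser threshold is precisely the situation that classically requires a largeness assumption on $\beta_0$ (de Figueiredo--Miyagaki--Ruf type), which this paper deliberately does not impose.

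The ingredient you never invoke, and which the paper uses to close exactly this gap, is $(f_3)$. It gives $f(s)s\ge M_0^{-1}s^{\vartheta+1}F(s)\ge M_0^{-1}(\beta_0-\epsilon)\,s^{\vartheta+1}e^{\alpha_0 s^2}$ for $s$ large, so on $B_{1/n}(0)$ the right-hand side of the Nehari identity $J'_\infty(t_nw_n)[t_nw_n]=0$ is bounded below by a quantity of order
\[
(\log n)^{\frac{\vartheta+1}{2}}\; n^{\,\alpha_0 t_n^2/\left(2\pi(1+\sqrt[p]{\delta_n})^2\right)-2},
\]
while the left-hand side stays bounded because $t_n$ is bounded. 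Taking logarithms, the summand $\frac{\vartheta+1}{2}\log(\log n)$ contributed by the factor $s^{\vartheta+1}$ diverges, and since $t_0^2\ge 4\pi/\alpha_0$ keeps the coefficient of $\log n$ asymptotically nonnegative, this is the contradiction. So the repair is to run your final comparison on $\int f(t_n\bar w_n)\,t_n\bar w_n\,dx$ through $(f_3)$--$(f_4)$ in the Nehari identity, rather than on $\int F(t_n\bar w_n)\,dx$ through $(f_4)$ alone; with that substitution the rest of your outline goes through as in the paper.
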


\begin{proof}
We have $c_\infty\geq m_{\zeta}>0$ by Lemma \ref{mountainpass1}.
Taking advantage of Lemma \ref{mountainpass2}, it is not difficult to observe that
 $c_\infty=\inf_{\gamma\in \Gamma}\max_{t\in(0,1]}J_\infty(\gamma(t))
\leq \inf_{u\in E\backslash\{0\}}\max_{t>0}J_\infty(tu)$.
 As a consequence, it suffices to conclude that
there exists a function $w\in E\backslash\{0\}$ such that
$\max_{t>0}J_\infty(tw)< \frac{2\pi}{\alpha_0}$. It follows from some elementary computations that
$\overline{w}_n\in C_{0}^\infty(B_1(0))\subset E$ and it
satisfies
\[
\int_{\R^2}|\nabla\bar{w}_n|^2dx=\frac{1}{2\pi\log n}\int_{B_{1}(0)\backslash B_{\frac1n}(0)}
\frac{1}{|x|^2}dx=\frac{1}{ \log n}\int^{1}_{\frac1n} \frac{1}{\rho}d\rho=1,
\]
and
$$
\int_{\R^2}|\bar{w}_n|dx=\sqrt{\frac{\log n}{2\pi}}\frac{\pi}{n^2}
+\sqrt{\frac{2\pi}{\log n}}\left(\frac{1}{4}-\frac{\log n}{2n^2}-\frac{1}{4n^2}\right)=o_n(1).
$$
Thanks to the interpolation inequality, there holds $|\bar{w}_n|_{s}^{s}=o_n(1)$ for all $1\leq s<+\infty$.

Denoting $|\bar{w}_n|_{p}^{p}=\delta_n$ with $\delta_n\to0$ and we then define
$w_n=\bar{w}_n/(1+\sqrt[p]{\delta_n})$ for all $n\in \mathbb{N}$. Obviously, it has that $\|w_n\|\equiv1$ which together with $\delta_n\to0$ indicates that
\begin{equation}\label{estimate1a}
|\nabla w_n|_2^2\to1~\text{and}~|w_n|_p^p\to0.
\end{equation}
With the help of \eqref{gauge1} and \eqref{estimate1a}, we follow \cite[Lemma 3.10]{SSY} to show that
\begin{equation}\label{estimate1aa}
c(w_n)\triangleq \int_{\R^2}\left(A_1^2[w_n]+A_2^2[w_n]\right)w_n^2dx\to0.
\end{equation}
We now claim that there is a $n\in \mathbb{N}^+$ such that
\begin{equation}\label{estimate1}
\max_{t>0}J_\infty(tw_n)<\frac{2\pi}{\alpha_0}.
\end{equation}
Otherwise, for all $n\in \mathbb{N}^+$, there exists a $t_n>0$ corresponding to the maximum
point of
$\max_{t>0}J(tw_n)$
\begin{equation}\label{estimate2}
  J^\prime_\infty(t_nw_n)[t_nw_n]=0~\text{and}~
J_\infty(t_nw_n)= \max_{t>0}J_\infty(tw_n)\geq \frac{2\pi}{\alpha_0}.
\end{equation}
From $(f_3)-(f_4)$, for all $\epsilon\in(0, \beta_0)$, there exists a constant $R_\epsilon= R(\epsilon) > 0$ such
that
\[
f(s)s\geq M^{-1}_0(\beta_0-\epsilon)s^{\vartheta+1}e^{\alpha_0|s|^{2}},~\forall |s|\geq R_\epsilon.
\]
According to the second formula in \eqref{estimate2}, $\{t_n\}$ is bounded below by some positive constant.
For some sufficiently large $n\in \mathbb{N}$, one knows that $t_nw_n\geq R_\epsilon$ on $B_{1/n}(0)$.
Using \eqref{estimate2} again,
\[
\begin{gathered}
t_n^2|\nabla w_n|_2^2+a_\infty t_n^p | w_n|_p^p +3t_n^6c(w_n) =\int_{\R^2}f(t_nw_n)t_nw_ndx  \geq\int_{B_{1/n}(0)}f(t_nw_n)t_nw_ndx  \hfill\\
\ \ \ \   \geq \pi M^{-1}_0(\beta_0-\epsilon)(t_nw_n)^{\vartheta+1}e^{\alpha_0|t_nw_n|^{2}}
n^{-2}\hfill\\
\ \ \ \ =\pi M^{-1}_0(\beta_0-\epsilon)\bigg[ \frac{t_n\sqrt{\log n}}{(1+
\sqrt[p]{\delta_n})\sqrt{2\pi}}\bigg]^{\vartheta+1}\exp\bigg[ \alpha_0t_n^{2}\frac{\log n}{
2\pi(1+
\sqrt[p]{\delta_n})^{2}
}  -2\log n\bigg]
\hfill\\
\end{gathered}
\]
indicating that $\{t_n\}$ is uniformly bounded in $n\in \mathbb{N}$.
Up to a subsequence if necessary, there exists a constant $t_0\in (0,+\infty)$ such that
$t_n \to t_0$. Since $F(s)\geq0$ for all $s\in\R$, we invoke from \eqref{estimate1a} and
the second formula in
 \eqref{estimate2} that
\begin{equation}\label{estimate3}
 t_0^2\geq \frac{4\pi}{\alpha_0}.
\end{equation}
Choosing $\epsilon=\beta_0/2$, we apply \eqref{estimate1a}-\eqref{estimate1aa} and $t_n\to t_0$ to get
\[
(1-\vartheta)\log t_0\geq C\bigg[1+\frac{ \vartheta+1 }{2}\log (\log n)
+\big(\alpha_0t_0^{2}(2\pi)^{-1}-2\big)\log n\bigg]+o_n(1),
\]
where $C>0$ is independent of $n\in \mathbb{N}$. Recalling \eqref{estimate3},
we would arrive at a contradiction by tending $n\to\infty$
and so \eqref{estimate1} holds true. The proof is completed.
\end{proof}

\begin{lemma}\label{bounded}
Let $1<p<2$,
then each sequence $\{u_n\}\subset E$ satisfying \eqref{sequence1} is uniformly bounded in $E$.
Moreover, there is a constant $K_0>0$ independent of $n\in \mathbb{N}$
such that \eqref{compact1} holds true.
\end{lemma}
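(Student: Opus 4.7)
The plan is to derive boundedness by the classical Cerami-sequence trick. From the second part of \eqref{sequence1} one obtains $|J_\infty'(u_n)[u_n]| \leq \|J_\infty'(u_n)\|_{E^{-1}}\|u_n\| \leq (1+\|u_n\|)\|J_\infty'(u_n)\|_{E^{-1}} \to 0$, so the combination $J_\infty(u_n) - \tfrac{1}{6}J_\infty'(u_n)[u_n]$ converges to $c_\infty$. Writing out both functionals using \eqref{aafunctional}, the coefficients on the Chern-Simons term are $\tfrac{1}{2}$ and $\tfrac{3}{6}$, producing an exact cancellation; the gradient term carries $\tfrac{1}{2}-\tfrac{1}{6}=\tfrac{1}{3}$; the $L^p$ term carries $\tfrac{1}{p}-\tfrac{1}{6}=\tfrac{6-p}{6p}>0$ since $p<2$; and the nonlinearity contributes $\int\bigl(\tfrac{1}{6}f(u_n)u_n-F(u_n)\bigr)\mathrm{d}x$. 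The identity thus reads
\[
c_\infty+o_n(1)=\tfrac{1}{3}\int_{\R^2}|\nabla u_n|^2\mathrm{d}x+\tfrac{6-p}{6p}a_\infty\int_{\R^2}|u_n|^p\mathrm{d}x+\int_{\R^2}\Bigl[\tfrac{1}{6}f(u_n)u_n-F(u_n)\Bigr]\mathrm{d}x.
\]

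The sign of the last integrand is governed by $(f_2)$. Since $f\in\mathcal{C}(\R)$ with $f(0)=0$ (forced by $(f_1)$), one writes
\[
\tfrac{1}{6}f(t)t-F(t)=\int_0^t s^5\Bigl(\tfrac{f(t)}{t^5}-\tfrac{f(s)}{s^5}\Bigr)\mathrm{d}s\geq 0\qquad(t>0),
\]
where the non-negativity follows from the strict monotonicity of $s\mapsto f(s)/s^5$ imposed by $(f_2)$. This representation deliberately avoids any appeal to $\mathcal{C}^1$ regularity of $f$. Consequently all three contributions in the displayed identity are non-negative, which forces both $|\nabla u_n|_2^2$ and $|u_n|_p^p$ to be uniformly bounded. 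Since $\|u_n\|^2=|\nabla u_n|_2^2+|u_n|_p^2$, this delivers the desired boundedness of $\{u_n\}$ in $E$.

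For the second claim I would exploit $J_\infty'(u_n)[u_n]=o_n(1)$ once more to rearrange
\[
\int_{\R^2}f(u_n)u_n\,\mathrm{d}x=\int_{\R^2}|\nabla u_n|^2\mathrm{d}x+3\int_{\R^2}(A_1^2+A_2^2)u_n^2\mathrm{d}x+a_\infty\int_{\R^2}|u_n|^p\mathrm{d}x+o_n(1).
\]
The first and third terms are already bounded. For the Chern-Simons contribution I would invoke \eqref{gauge1} from Lemma \ref{gauge}, which provides $\int (A_j u_n)^2\mathrm{d}x\leq \bar{C}_r^2\|u_n\|^6$ for $j=1,2$; combined with the boundedness of $\|u_n\|$, this closes the estimate and yields a uniform constant $K_0>0$ with $\int_{\R^2}f(u_n)u_n\,\mathrm{d}x\leq K_0$, i.e., \eqref{compact1}.

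The computation is largely routine; the only point requiring genuine care is confirming that $(f_2)$, which is a pointwise monotonicity condition rather than a differential one, suffices to guarantee $\tfrac{1}{6}f(t)t-F(t)\geq 0$. The integral-representation argument above handles this without any smoothness hypothesis on $f$. Beyond this, the sole accounting task is the careful tracking of the coefficients so that the Chern-Simons cross-term indeed cancels exactly in the combination $J_\infty-\tfrac{1}{6}J_\infty'[\,\cdot\,]$; the non-standard $\tfrac{1}{6}$ (rather than the usual $\tfrac{1}{2}$ factor one sees in non-Chern-Simons settings) is precisely what is needed to kill that cross-term.
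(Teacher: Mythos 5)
Your proof is correct and follows essentially the same route as the paper: both compute $J_\infty(u_n)-\tfrac16 J'_\infty(u_n)[u_n]$, use the exact cancellation of the Chern--Simons term, drop the nonnegative term $\int[\tfrac16 f(u_n)u_n-F(u_n)]\,\mathrm{d}x$ (justified by $(f_2)$), and then bound $\int f(u_n)u_n\,\mathrm{d}x$ from $J'_\infty(u_n)[u_n]=o_n(1)$ together with \eqref{gauge1}. Your integral-representation argument for $\tfrac16 f(t)t-F(t)\geq 0$ merely spells out a step the paper leaves implicit.
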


\begin{proof}
Given a sequence $\{u_n\}\subset E$ satisfying \eqref{sequence1},
it follows from $(f_2)$ that
\begin{align*}
c_\infty+o_n(1) & =J_\infty(u_n)-\frac16J'_\infty(u_n)[u_n] \\
    & \geq \frac13\int_{\R^2} |\nabla u_n|^2\mathrm{d}x+\left(\frac1p-\frac16\right)a_\infty\int_{\R^2} |u_n|^p\mathrm{d}x.
\end{align*}
Recalling Lemma \ref{estimate} and $1<p<2$, we see that $\{\|u_n\|\}$ is uniformly bounded in $n\in \mathbb{N}$. Then, we are derived from
$\|u_n\|  \|J_\infty^\prime(u_n)\|_{E^{-1}}\to0$ and \eqref{gauge1} that
\begin{align*}
\int_{\R^2} f (u_n)u_n\mathrm{d}x& = \int_{\R^2} [ |\nabla u_n|^2 +3(A_1^2+A_2^2 )u_n^2 ]\mathrm{d}x+ a_\infty\int_{\R^2}|u_n|^{p}\mathrm{d}x
 +o_n(1) \\
  & \leq\int_{\R^2} |\nabla u_n|^2 \mathrm{d}x+ a_\infty\int_{\R^2}|u_n|^{p}\mathrm{d}x+3 \bar{C}_r^2\|u_n\|^6
 +o_n(1)
\end{align*}
implying the desired result. The proof is completed.
\end{proof}

Before establishing the existence of ground state solutions for Eq. \eqref{mainequation2},
we shall introduce a new version type of Vanishing lemma with respect to our variational setting.

\begin{theorem}\label{Vanishing}
Let $1<p<2$ and $r>0$. If $\{u_n\}$ is bounded in $E$ and suppose that
\[
\limsup_{n\to\infty}\sup_{y\in\R^2}\int_{B_r(y)}|u_n|^{p} dx=0,
\]
 then $u_n\to 0$ in $L^s(\R^2)$ for all $p<s<+\infty$.
\end{theorem}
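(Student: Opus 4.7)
The plan is to adapt the classical Lions lemma, cf.\ \cite[Lemma 1.21]{Willem}, to the zero-mass setting. The crucial difference is that here we only control $u_n$ globally in $L^p$ with $p<2$ (rather than $L^2$), while $\nabla u_n$ lies in $L^2$ globally. I would bridge this mismatch via the Sobolev embedding $W^{1,p}(B_r)\hookrightarrow L^{p^*}(B_r)$ with $p^*=\frac{2p}{2-p}>2$, observing that H\"older on a bounded ball gives $\|\nabla u_n\|_{L^p(B_r(y))}\leq C(r)\|\nabla u_n\|_{L^2(B_r(y))}$ because $p<2$.

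First I would cover $\R^2$ by a countable family $\{B_r(y_i)\}_{i\in\mathbb{N}}$ with uniformly finite overlap $K$ (for instance by placing $y_i$ on the lattice $\frac{r}{\sqrt 2}\mathbb{Z}^2$). Singling out the exponent $s=2p\in(p,p^*)$, local H\"older interpolation at the index $\theta=\frac{p-1}{p}$ in $\frac{1}{2p}=\frac{\theta}{p}+\frac{1-\theta}{p^*}$ yields
\[
\int_{B_r(y)}|u_n|^{2p}\,dx\leq \|u_n\|_{L^p(B_r(y))}^{2(p-1)}\,\|u_n\|_{L^{p^*}(B_r(y))}^{2},
\]
and the local Sobolev-type inequality gives
\[
\|u_n\|_{L^{p^*}(B_r(y))}^{2}\leq C\bigl(\|\nabla u_n\|_{L^2(B_r(y))}^{2}+\|u_n\|_{L^p(B_r(y))}^{2}\bigr),
\]
with $C$ independent of $y$ by translation invariance. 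Denoting $\epsilon_n\triangleq\sup_{y\in\R^2}\int_{B_r(y)}|u_n|^p\,dx\to 0$, I would factor out $\|u_n\|_{L^p(B_r(y))}^{2(p-1)}\leq\epsilon_n^{2(p-1)/p}$ and sum over $y_i$. The finite overlap controls $\sum_i\|\nabla u_n\|_{L^2(B_r(y_i))}^2\leq K|\nabla u_n|_2^2$, while the remaining piece is handled by exploiting $2/p>1$ through
\[
\sum_i \|u_n\|_{L^p(B_r(y_i))}^{2}=\sum_i \Bigl(\int_{B_r(y_i)}|u_n|^p\,dx\Bigr)^{2/p}\leq \epsilon_n^{(2-p)/p}\sum_i \int_{B_r(y_i)}|u_n|^p\,dx\leq K\epsilon_n^{(2-p)/p}|u_n|_p^p.
\]
Putting everything together, $|u_n|_{2p}^{2p}\leq C\epsilon_n^{2(p-1)/p}(|\nabla u_n|_2^2+o_n(1))\to 0$, so $u_n\to 0$ in $L^{2p}(\R^2)$.

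Finally, I would extend to every $s\in(p,\infty)$ by plain H\"older interpolation together with the continuous embedding $E\hookrightarrow L^t(\R^2)$ from Lemma \ref{imbedding}: if $s\in(p,2p)$ interpolate between the bounded $L^p$-norm and the vanishing $L^{2p}$-norm, and if $s>2p$ pick any $t>s$ and interpolate between the vanishing $L^{2p}$-norm and the bounded $L^t$-norm. The main obstacle is the balancing act in the middle step: the choice $s=2p$, equivalently $\theta=(p-1)/p$, is \emph{forced} by the requirement that the local Sobolev embedding produce exactly the $L^2$-gradient power that is summable with finite overlap, while still leaving enough local $L^p$-mass on each ball to be absorbed by the vanishing hypothesis. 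This is precisely the point at which the zero-mass version departs from the classical $H^1$-argument, since we cannot rely on a global $L^2$-control of $u_n$ itself.
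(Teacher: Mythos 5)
Your proof is correct, and it follows the same Lions-type strategy as the paper (a local integral inequality on balls of radius $r$, followed by a covering of $\R^2$ with uniformly finite overlap and summation), but the key local inequality is genuinely different. The paper invokes the sharp global Gagliardo--Nirenberg inequality $|u|_s^s\leq \mathcal{C}_s|\nabla u|_2^{s-p}|u|_p^p$ from \cite{Agueh}, applies it ball by ball, and thereby treats every exponent $s\in(p,+\infty)$ in one stroke; you instead use only elementary local H\"older interpolation between $L^p$ and $L^{p^*}$ together with the standard Sobolev embedding $W^{1,p}(B_r)\hookrightarrow L^{p^*}(B_r)$, settle the single exponent $s=2p$ (for which the $L^{p^*}$-norm appears with power exactly $2$, so the gradient contribution sums against the finite overlap), and then propagate to all $s\in(p,+\infty)$ by global interpolation using the continuous embedding $E\hookrightarrow L^t(\R^2)$ of Lemma \ref{imbedding}. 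What your route buys is twofold: it avoids any appeal to a sharp Gagliardo--Nirenberg constant, and --- more substantively --- it correctly retains the zero-order term $\|u_n\|_{L^p(B_r(y))}^2$ in the local Sobolev estimate, which is genuinely needed on a ball where $u_n$ has no boundary decay (the homogeneous inequality $|u|_{L^s(B)}^s\leq C|\nabla u|_{L^2(B)}^{s-p}|u|_{L^p(B)}^p$ fails for, say, $u\equiv 1$ on $B$, so the paper's ball-wise application would require a cutoff or the inhomogeneous form to be fully rigorous); you then absorb that extra term using $2/p>1$ and the vanishing hypothesis, which is exactly the right bookkeeping. What the paper's route buys is brevity and a single uniform formula for all $s$. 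Your two-step extension to general $s$ is sound since $|u_n|_p$ and $|u_n|_t$ remain bounded by the boundedness of $\{u_n\}$ in $E$.
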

\begin{proof}
We follow the idea adopted in \cite[Lemma 1.21]{Willem}
to conclude the proof and exhibit it in detail for the convenience of the reader.
First of all, we recall the Gagliardo-Nirenberg inequality in \cite{Agueh} that
\begin{equation}\label{GN}
|u|_s^s\leq \mathcal{C}_s|\nabla u|_2^{s-p}|u|_p^p,~\forall u\in E~\text{and}~p<s<+\infty,
\end{equation}
where the constant $\mathcal{C}_s>0$ only depends on $s$.
So, we are derived form \eqref{GN} that
\begin{align*}
  \int_{B_r(y)}|u_n|^{s} dx & \leq \mathcal{C}_s\left(\int_{B_r(y)}|u_n|^{p} dx\right)
\left(\int_{B_r(y)}|\nabla u_n|^{2} dx\right)^{\frac{s-p}{2}}.
\end{align*}
Covering $\R^2$ by balls of radius $r$ in such a way that each point of $\R^2$ is contained in at most $3$ balls, we are able to see that
\[
\int_{\R^2}|u_n|^{s} dx\leq 3\mathcal{C}_s
\sup_{y\in\R^2}\left(\int_{B_r(y)}|u_n|^{p} dx\right)^{\frac{p(1-\varpi)}{q^*}}\|u_n\|^{s-p}.
\]
Under the assumption of this lemma, it holds that
$u_n\to0$ in $L^s(\R^2)$ for each $p<s<+\infty$.
The proof is completed.
\end{proof}

We are now in a position to show the proof of Theorem \ref{autonomous1}.

\begin{proof}[\emph{\textbf{Proof of Theorem \ref{nonautonomous1}}}]
Due to Lemmas \ref{mountainpass1}-\ref{mountainpass2} and Proposition \ref{mp},  there is a sequence $\{u_n\}\subset E$
satisfying
\eqref{sequence1}.
From Lemma \ref{bounded}, $\{\|u_n\|\}$ is uniformly bounded in $n\in \mathbb{N}$.
Passing to a subsequence if necessary, using Lemma \ref{imbedding},
 there exists a $u_\infty\in E$ such that $u_n\rightharpoonup u_\infty$ in $E$, $u_n\to u_\infty$ in $L^s_{\text{loc}}(\R^2)$
with $s\in(p,+\infty)$ and $u_n\to u_\infty$ a.e. in $\R^2$.
We claim that, there are $y\in\R^2$ and $r,\tau>0$ such that
\begin{equation}\label{claim}
 \int_{B_r(y)}|u_n|^p\mathrm{d}x\geq\tau.
\end{equation}
Otherwise, thanks to Theorem \ref{Vanishing},
we obtain that $u_n\to0$ in $L^s(\R^2)$ for every $s\in(p,+\infty)$.
According to Lemma \ref{bounded}, we now take a similar calculations in \eqref{compact2} to deduce that
\begin{equation}\label{Fn}
\lim_{n\to\infty}\int_{\R^2}F(u_n)dx=0.
\end{equation}
Our next goal is to show that
\begin{equation}\label{fn}
 \lim_{n\to\infty}\int_{\R^2}f(u_n)u_ndx=0.
\end{equation}
Indeed, on the one hand, taking \eqref{Fn},
$J_\infty(u_n)\to c_\infty$ and Lemma \ref{estimate} into account that
$\limsup\limits_{n\to\infty}|\nabla u_n|_2^2<\frac{4\pi}{\alpha_0}$.
Thereby, we shall choose $\alpha>\alpha_0$ sufficiently close to $\alpha_0$ and $\nu> 1$ sufficiently close to 1 in such
a way that $\frac1\nu + \frac1{\nu'}= 1$ with $\nu'>1$ and
$$
\alpha\nu|\nabla u_n|_2^2<4\pi(1-\epsilon)~\text{for some suitable}~\epsilon\in(0,1).
$$
We define
$$
\bar{u}_n=\sqrt{\frac{\alpha\nu}{4\pi(1-\epsilon)}} u_n,~\forall n\in \mathbb{N}.
$$
Obviously, one sees that $|\nabla \bar{u}_n|_2^2\leq 1$ for all sufficiently
$n\in \mathbb{N}$ and $|\bar{u}_n|_p^p$ is uniformly bounded in $n\in \mathbb{N}$.
On the other hand, we apply \eqref{maintheorem1a}
in \eqref{growth1} to get
\begin{align*}
\int_{\R^2}f(u_n)u_ndx & \leq \varepsilon \int_{\R^2}|u_n|^2dx+ C_\varepsilon\int_{\R^2}|u_n|^q\Phi_{\alpha,j_0}(u_n)dx \\
  & \leq  \varepsilon \int_{\R^2}|u_n|^2dx+ C_\varepsilon\left(\int_{\R^2}|u_n|^{q\nu'}dx\right)^{\frac1{\nu'}}
\left( \int_{\R^2}\Phi_{4\pi(1-\epsilon),j_0}(\bar{u}_n)dx\right)^{\frac1{\nu}}  \\
& \leq  \varepsilon \int_{\R^2}|u_n|^2dx+ C_\varepsilon \mathbb{S}(4\pi)\left(\int_{\R^2}|u_n|^{q\nu'}dx\right)^{\frac1{\nu'}}.
\end{align*}
Letting $n\to\infty$ and then tending $\varepsilon\to0^+$, we reach the desired result \eqref{fn}.
With \eqref{fn} in hands, as a direct consequence of $J_\infty'(u_n)[u_n]\to0$,
we derive that $|\nabla u_n|_2^2\to0$, $|u_n|_p^p\to0$ and $\int_{\R^2}(A_1^2+A_2^2)u_n^2dx\to0$.
Exploiting \eqref{Fn} and $J_\infty(u_n)\to c_\infty$
again, it immediately concludes that $c_\infty\equiv0$ which contradicts with $c_\infty>0$ in Lemma \ref{estimate}.
So, we see that \eqref{claim} must hold true.

According to \eqref{claim}, we define $v_n=u_n(\cdot+y_n)$ for every $n\in \mathbb{N}$.
Since both $J_\infty$ and $J'_\infty$ are translation invariant in $\R^2$,
one knows that $\{v_n\}$ is still a $(C)$ sequence of $J_\infty$ at the level $c_\infty$.
Arguing as before, passing to a subsequence if necessary, $v_n\rightharpoonup v$ in $E$, $v_n\to v$ in $L^s_{\text{loc}}(\R^2)$
with $s\in(p,+\infty)$ and $v_n\to v$ a.e. in $\R^2$. Moreover, we can see that $v\neq0$ by \eqref{claim}.
As a consequence, without loss of generality, we consider the sequence $\{u_n\}$ instead of $v_n$
to suppose that $u_\infty\neq0$. In view of \eqref{BLCS1} and \eqref{compact3}, there holds
$J'_\infty(u_\infty )=0$ and so $u_\infty\in \mathcal{N}_\infty$.
We are then derived from \eqref{sequence1} and the Fatou's lemma that
\begin{align}\label{equality}
\nonumber c_\infty & =\liminf_{n\to\infty} J_\infty(u_n)    =\liminf_{n\to\infty}\left\{  J_\infty(u_n)-\frac16J'_\infty(u_n)[u_n] \right\}\\
\nonumber &=\liminf_{n\to\infty}\left\{\frac13\int_{\R^2} |\nabla u_n|^2\mathrm{d}x+\left(\frac1p-\frac16\right)a_\infty\int_{\R^2} |u_n|^p\mathrm{d}x
+\frac16\int_{\R^2}[f(u_n)u_n-6F(u_n)]\mathrm{d}x \right\}\\
 \nonumber    & \geq \frac13\int_{\R^2} |\nabla u _\infty|^2\mathrm{d}x+\left(\frac1p-\frac16\right)a_\infty\int_{\R^2} |u _\infty|^p\mathrm{d}x
+\frac16\int_{\R^2}[f(u_\infty )u_\infty -6F(u_\infty )]\mathrm{d}x\\
&= J_\infty(u _\infty)-\frac16J'_\infty(u_\infty )[u _\infty]= J_\infty(u_\infty )\geq m_\infty.
\end{align}
From which, it follows from Lemma \ref{unique}
that $u_n\to u_\infty$ in $E$ along a subsequence.
In other words, we deduce that $u_\infty$
is a solution of Eq. \eqref{mainequation2} with $J_\infty(u_\infty)=m_\infty$.
The positivity of $u_\infty$ is trivial, and so we omit it here. The proof is completed.
\end{proof}

\begin{remark}
We invite the reader to see that Theorem \ref{maintheorem3}
is a direct corollary of Theorem \ref{nonautonomous1}.
\end{remark}

\section{Proof od Theorem \ref{maintheorem2}}\label{Sec4}

In this section, we are going to investigate the existence of positive ground state solutions for Eq. \eqref{mainequation1}.
From the view point of variational method, we search for critical points of
$J_a$ defined in \eqref{aafunctional}.
Recalling the discussions in Section \ref{Sec2},
$J_a$ is well-defined and of class of $C^1(E,\R)$.

We shall prove the following result.

\begin{theorem}\label{autonomous1}
 Let $1<p<2$ and suppose $(A_1)-(A_2)$. If $f$ satisfies \eqref{definition} and $(f_1)-(f_4)$, then
Eq. \eqref{mainequation1} admits at least a positive ground state solution $u_a\in E$ such that
$$
J_a(u_a )=m_{a}=\inf_{u\in E\backslash\{0\}}
\max_{t\geq0}
J_a(tu ).
$$
\end{theorem}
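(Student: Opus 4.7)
The strategy is to import the mountain-pass scheme of Theorem~\ref{nonautonomous1} into the non-autonomous setting, compensating for the loss of translation invariance by a strict level comparison with $m_\infty$. First I observe that because $(A_1)$ and $(A_2)$ make $a$ positive and bounded, the norms $\|\cdot\|$ and $\|\cdot\|_a$ are equivalent, so $J_a\in\mathcal{C}^1(E,\R)$ inherits the mountain-pass geometry: the analogs of Lemmas~\ref{mountainpass1} and~\ref{mountainpass2} pass without modification. Proposition~\ref{mp} then furnishes a Cerami sequence $\{u_n\}\subset E$ at the level
\[
c_a\triangleq \inf_{\gamma\in\Gamma_a}\max_{t\in[0,1]} J_a(\gamma(t))>0,
\]
where $\Gamma_a=\{\gamma\in\mathcal{C}([0,1],E):\gamma(0)=0,\ J_a(\gamma(1))<0\}$, and the computation of Lemma~\ref{bounded} (which only uses $(f_2)$, $(f_4)$ and $a>0$) yields that $\{u_n\}$ is bounded in $E$ with $\sup_n\int_{\R^2}f(u_n)u_n\,dx<+\infty$.

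The key energy estimate is the strict inequality $c_a<m_\infty$. From the identity
\[
J_a(u)=J_\infty(u)+\frac{1}{p}\int_{\R^2}(a(x)-a_\infty)|u|^p\,dx
\]
and hypothesis $(A_2)$, the correction term is strictly negative for every positive $u\in E$. Testing against the ground state $u_\infty$ supplied by Theorem~\ref{nonautonomous1} and choosing $t^\star>0$ realizing $\max_{t\geq0}J_a(tu_\infty)$, I obtain
\[
\max_{t\geq0}J_a(tu_\infty)=J_a(t^\star u_\infty)<J_\infty(t^\star u_\infty)\leq \max_{t\geq0}J_\infty(tu_\infty)=m_\infty,
\]
and since $J_a(t_0 u_\infty)<0$ for $t_0$ large, the path $t\mapsto tt_0 u_\infty$ lies in $\Gamma_a$, giving $c_a<m_\infty$.

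The main obstacle is restoring compactness, because $J_a$ is no longer translation invariant. After extracting $u_n\rightharpoonup u_a$ in $E$, I would argue $u_a\neq0$ by contradiction. Vanishing is excluded as in Theorem~\ref{nonautonomous1}: the Trudinger--Moser inequality \eqref{maintheorem1a}, combined with $J_a'(u_n)[u_n]\to 0$ and $\inf a>0$, would force $\|u_n\|\to 0$ and $c_a=0$. Hence there exist $y_n\in\R^2$ and $r,\tau>0$ with $\int_{B_r(y_n)}|u_n|^p\,dx\geq \tau$, and $u_n\to0$ in $L^p_{\mathrm{loc}}$ forces $|y_n|\to\infty$. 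Setting $v_n(x)\triangleq u_n(x+y_n)$ and
\[
\widetilde J_n(w)\triangleq \tfrac12\!\int_{\R^2}\!\bigl[|\nabla w|^2+(A_1^2+A_2^2)w^2\bigr]dx+\tfrac{1}{p}\!\int_{\R^2}\!a(x+y_n)|w|^p\,dx-\int_{\R^2}\!F(w)\,dx,
\]
one has $\widetilde J_n(v_n)\to c_a$ and $(1+\|v_n\|)\|\widetilde J_n'(v_n)\|_{E^{-1}}\to 0$. Lemmas~\ref{BLCS} and~\ref{compact} govern the gauge and nonlinear contributions, and dominated convergence against compactly supported test functions, using $a(x+y_n)\to a_\infty$ pointwise, yields $J_\infty'(v)=0$ for the weak limit $v_n\rightharpoonup v\neq 0$. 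Hence $v\in\mathcal{N}_\infty$, so $J_\infty(v)\geq m_\infty$; a Fatou argument modeled on \eqref{equality}, with $a(x+y_n)\to a_\infty$ a.e.\ in the $L^p$ integrand, then gives
\[
c_a=\liminf_{n\to\infty}\Bigl[\widetilde J_n(v_n)-\tfrac{1}{6}\widetilde J_n'(v_n)[v_n]\Bigr]\geq J_\infty(v)-\tfrac{1}{6}J_\infty'(v)[v]\geq m_\infty,
\]
contradicting $c_a<m_\infty$.

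Consequently $u_a\neq 0$, and Lemmas~\ref{BLCS} and~\ref{compact} give $J_a'(u_a)=0$, so $u_a\in\mathcal{N}_a\triangleq\{u\in E\setminus\{0\}:J_a'(u)[u]=0\}$. The analog of Lemma~\ref{unique} (whose proof uses only $(f_2)$) identifies $c_a$ with $m_a=\inf_{\mathcal{N}_a}J_a=\inf_{u\neq 0}\max_{t\geq 0}J_a(tu)$, and the Fatou inequality \eqref{equality} then forces $J_a(u_a)=m_a=c_a$. Positivity of $u_a$ follows from $(f_1)$, elliptic regularity and the strong maximum principle, exactly as in the autonomous case.
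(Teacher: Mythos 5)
Your proposal is correct and follows the same overall skeleton as the paper (mountain-pass geometry for $J_a$, boundedness of the Cerami sequence, the strict comparison $c_a<m_\infty$ obtained by testing along the ray through the ground state $u_\infty$ of the limit problem, and then nontriviality of the weak limit), but the mechanism you use for the crucial step $u_a\neq 0$ is genuinely different. The paper works with a $(C)_{m_a}$ sequence lying on the Nehari manifold $\mathcal{N}_a$ and, assuming $u_a=0$, projects each $u_n$ onto $\mathcal{N}_\infty$ via $t_nu_n\in\mathcal{N}_\infty$; since $u_n\in\mathcal{N}_a$ maximizes $J_a$ along its own ray, $J_a(u_n)\geq J_a(t_nu_n)=J_\infty(t_nu_n)+\frac{t_n^p}{p}\int_{\R^2}(a(x)-a_\infty)|u_n|^p\,dx\geq m_\infty+o_n(1)$, the correction vanishing because $u_n\rightharpoonup 0$ and $a(x)-a_\infty\to 0$ at infinity; this contradicts $c_a<m_\infty$ with no need for translations. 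You instead run the Lions-type dichotomy: exclude vanishing via the Trudinger--Moser bound (which requires $c_a<2\pi/\alpha_0$, guaranteed here since $c_a<m_\infty=c_\infty<2\pi/\alpha_0$), translate by $|y_n|\to\infty$, pass to the limit functional using $a(\cdot+y_n)\to a_\infty$, and apply Fatou to get $c_a\geq m_\infty$. Both routes are sound; the paper's is shorter because the Nehari fibration does the work of the splitting, while yours is more robust in that it does not require the Cerami sequence to lie on $\mathcal{N}_a$ (only the comparison $c_a<m_\infty$ and the translated Fatou inequality), at the price of handling the $n$-dependent functionals $\widetilde J_n$ carefully. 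Your concluding identification $J_a(u_a)=m_a=c_a$ via the analog of Lemma \ref{unique} and the Fatou computation \eqref{equality} matches the paper.
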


Associated with $J_a$, we have the Nehari manifold given by
$$
\mathcal{N}_a=\{u \in E\setminus \{0\}\,:\, J'_a(u)[u]=0\},
$$
and define the minimization problem
\begin{equation} \label{aaa}
	m_a=\min_{u \in \mathcal{N}_a}J_a(u).
\end{equation}
By definitions of $m_\infty$ and $m_a$ in \eqref{aa} and \eqref{aaa}, adopting $({A}_2)$, it is easy to check that
\begin{equation} \label{compare}
m_a<m_\infty.
\end{equation}

Moreover, because $a$ is a positive and bounded function, it permits us to repeat
 the arguments in Section \ref{Sec3}
to find a $(C)$ sequence $\{u_n\}\subset E$ of $J_a$ at the level $c_a$, where
\begin{equation}\label{mplevel}
 c_a\triangleq\inf_{\gamma\in\Gamma_{a}}\max_{t\in[0,1]}J_ a(\gamma(t))>0
\end{equation}
with $\Gamma_a=\{\gamma\in \mathcal{C}([0,1],E):\gamma(0)=0~\text{and}~J_a(\gamma(1))<0\}$.
We can also deduce that
\begin{equation} \label{aaaaaa}
	m_a=c_a=d_a\triangleq \inf_{u\in E\backslash\{0\}}
\max_{t\geq0}
J_a(tu )
\end{equation}
and
\begin{equation} \label{aaaaaaaa}
	0<c_a <\frac{2\pi}{\alpha_0}.
\end{equation}

Actually, the essential, or unique, difference between the proof of Theorem \ref{nonautonomous1}
and that of Theorem \ref{autonomous1} is that whether the variational functional is translation invariant
in $\R^2$. Clearly, we realize that $J_a$ does not have such a good property because of
the appearance of the nonconstant potential $a$. Thereby,
to reach the proof, it is enough to verify that the weak limit of $(C)_{c_a}$ sequence is nontrivial.

\begin{lemma}\label{nontrivial}
Under the assumptions of Theorem \ref{autonomous1}, if $\{u_n\}\subset \mathcal{N}_a$ denotes a $(C)_{c_a}$ sequence of $J_a$
and $u_n\rightharpoonup  u_a$ in $E$ along a subsequence, then
$u_a\neq0$.
\end{lemma}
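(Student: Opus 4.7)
The plan is to argue by contradiction: assume $u_a\equiv 0$, so that $u_n\rightharpoonup 0$ in $E$, and show this forces $c_a\geq m_\infty$, contradicting \eqref{compare}. The strategy is to degrade $\{u_n\}$ into a $(C)_{c_a}$ sequence for the limit functional $J_\infty$ and then rerun the end of the proof of Theorem \ref{nonautonomous1}.

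The first step is to compare $J_a$ and $J_\infty$ along $\{u_n\}$. The only difference is the term $\frac1p\int_{\R^2}[a(x)-a_\infty]|u|^p\mathrm dx$. Given $\varepsilon>0$, by $(A_2)$ I would choose $R=R(\varepsilon)>0$ so that $|a(x)-a_\infty|<\varepsilon$ for $|x|>R$. Splitting the integral over $B_R(0)$ and its complement, using Lemma \ref{imbedding} to get $u_n\to 0$ in $L^p(B_R(0))$ (since $u_n\rightharpoonup 0$ in $E$), and using boundedness of $\{|u_n|_p\}$ on $\R^2\setminus B_R(0)$, I obtain
\[
\int_{\R^2}[a(x)-a_\infty]|u_n|^p\mathrm dx=o_n(1).
\]
An analogous argument, coupled with a density/test-function truncation, yields $\|J'_a(u_n)-J'_\infty(u_n)\|_{E^{-1}}=o_n(1)$. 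Consequently $\{u_n\}$ is a $(C)_{c_a}$ sequence for $J_\infty$ as well.

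Next I would repeat the concentration-compactness dichotomy from the proof of Theorem \ref{nonautonomous1}. Lemma \ref{bounded} applied to $J_\infty$ gives that $\{u_n\}$ is bounded in $E$ and satisfies the condition \eqref{compact1}. If vanishing holds, that is $\sup_{y\in\R^2}\int_{B_r(y)}|u_n|^p\mathrm dx\to 0$, then Theorem \ref{Vanishing} delivers $u_n\to 0$ in $L^s(\R^2)$ for all $p<s<+\infty$, and the Trudinger-Moser estimate \eqref{maintheorem1a} together with \eqref{aaaaaaaa} (which ensures $\limsup|\nabla u_n|_2^2<4\pi/\alpha_0$) yields $\int_{\R^2}f(u_n)u_n\mathrm dx\to 0$ and $\int_{\R^2}F(u_n)\mathrm dx\to 0$. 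Combined with $J'_\infty(u_n)[u_n]\to 0$, this forces $\|u_n\|\to 0$ and hence $c_a=0$, contradicting $c_a>0$.

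Hence non-vanishing occurs: there exist $r,\tau>0$ and $\{y_n\}\subset\R^2$ with $\int_{B_r(y_n)}|u_n|^p\mathrm dx\geq\tau$. Because $u_n\rightharpoonup 0$ in $E$ and the embedding $E\hookrightarrow L^p_{\text{loc}}(\R^2)$ is compact, we must have $|y_n|\to+\infty$. I then set $v_n\triangleq u_n(\cdot+y_n)$; since $J_\infty$ and $J'_\infty$ are translation invariant, $\{v_n\}$ is a bounded $(C)_{c_a}$ sequence for $J_\infty$. Up to a subsequence, $v_n\rightharpoonup v$ in $E$ with $v\neq 0$, and by \eqref{BLCS1} and \eqref{compact3}, $v$ is a critical point of $J_\infty$, i.e.\ $v\in\mathcal N_\infty$. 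Applying Fatou's lemma to the representation $J_\infty(v_n)-\tfrac16 J'_\infty(v_n)[v_n]$ exactly as in \eqref{equality} (this uses $(f_2)$ to ensure the integrand is nonnegative) gives
\[
c_a=\lim_{n\to\infty}\Bigl[J_\infty(v_n)-\tfrac16 J'_\infty(v_n)[v_n]\Bigr]\geq J_\infty(v)\geq m_\infty,
\]
which contradicts \eqref{compare}.

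The main obstacle is the first step: showing that the non-autonomous correction $\int_{\R^2}[a(x)-a_\infty]|u_n|^p\mathrm dx$ and the corresponding derivative defect go to zero, \emph{both} along $\{u_n\}$ and on arbitrary test functions $v\in E$ with $\|v\|\leq 1$ uniformly; this is where $(A_2)$ (the inequality being strict on a set of positive measure, plus decay at infinity) is essential. Once that is secured, the rest follows from the translation invariance of $J_\infty$, the non-vanishing dichotomy provided by Theorem \ref{Vanishing}, and the Nehari comparison \eqref{compare}.
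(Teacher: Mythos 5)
Your argument is correct, but it follows a genuinely different route from the paper's. The paper exploits the hypothesis $\{u_n\}\subset\mathcal N_a$ directly: it projects $u_n$ onto the limit Nehari manifold via Lemma \ref{unique} (choosing $t_n>0$ with $t_nu_n\in\mathcal N_\infty$, with $\{t_n\}$ bounded), writes
\[
c_a+o_n(1)=J_a(u_n)=\max_{t\geq0}J_a(tu_n)\geq J_a(t_nu_n)=J_\infty(t_nu_n)+\frac{t_n^p}{p}\int_{\R^2}(a(x)-a_\infty)|u_n|^p\,\mathrm dx\geq m_\infty+o_n(1),
\]
and concludes $c_a\geq m_\infty$ in three lines, the only analytic input being exactly your first step, namely $\int_{\R^2}(a(x)-a_\infty)|u_n|^p\,\mathrm dx\to0$ from $(A_2)$, the local compactness of Lemma \ref{imbedding} and $u_n\rightharpoonup0$. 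You instead never use $\{u_n\}\subset\mathcal N_a$: you upgrade the potential comparison to the level of derivatives, turn $\{u_n\}$ into a $(C)_{c_a}$ sequence for $J_\infty$, and rerun the whole vanishing/non-vanishing dichotomy of Section \ref{Sec3} (Theorem \ref{Vanishing}, translation invariance, Fatou via $(f_2)$) to reach $c_a\geq m_\infty$. What your approach buys is generality -- it applies to an arbitrary $(C)_{c_a}$ sequence, not only one lying on the Nehari manifold, and it avoids re-justifying the boundedness of the projection parameters $t_n$; what it costs is length and a second pass through the concentration-compactness machinery, including the uniform-in-$v$ estimate $\|J_a'(u_n)-J_\infty'(u_n)\|_{E^{-1}}=o_n(1)$ that the paper never needs. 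One small ordering point in your vanishing case: the bound $\limsup_n|\nabla u_n|_2^2<4\pi/\alpha_0$ is not a direct consequence of \eqref{aaaaaaaa} alone -- you must first obtain $\int_{\R^2}F(u_n)\,\mathrm dx\to0$ (via Lemma \ref{compact} and Theorem \ref{Vanishing}) and only then combine it with $J_\infty(u_n)\to c_a<2\pi/\alpha_0$ to control the gradient, after which the Trudinger--Moser estimate yields $\int_{\R^2}f(u_n)u_n\,\mathrm dx\to0$; this is how the paper orders the same steps in the proof of Theorem \ref{nonautonomous1}, and your argument should be read in that order.
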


\begin{proof} Assume by contradiction that $u_a=0$ and let $t_n>0$ such that $t_nu_n \in \mathcal{N}_\infty$
by Lemma \ref{unique}. The standard calculations show that
$\{t_n\}$ is bounded, and so,
$$
c_a +o_n(1)=J_a (u_n)= \max_{t \geq 0}J_a (tu_n) \geq J_a (t_nu_n)=J_\infty(t_nu_n)
+\frac{t_n^p}{p}\int_{\R^2}(a(x)-a_\infty)|u_n|^p dx,
$$
where we have used  Lemma \ref{unique} again in the second equality.
From this,
$$
c_a  +o_n(1) \geq m_\infty + \frac{t_n^p}{p}\int_{\R^2}(a(x)-a_\infty)|u_n|^pdx.
$$
In light of the fact that $\{t_n\}$ is bounded, we can make use of $(A_2)$ to have that
$$
\int_{\R^2}(a(x)-a_\infty)|u_n|^pdx\to 0
$$
loading to
$$
c_a \geq m_\infty,
$$
which contradicts with \eqref{compare} and \eqref{aaaaaa}. The proof is completed.
\end{proof}

\begin{proof}[\emph{\textbf{Proof of Theorem \ref{autonomous1}}}]
Owing to \eqref{aaaaaa}, we just need to find a sequence $\{u_n\}\subset \mathcal{N}_a$
and it is a $(C)_{m_a}$ sequence of $J_a$. It is standard, we refer the reader to \cite[Theorem 1.1]{PSZZ}
and so the proof is done.
\end{proof}

\section{Proof od Theorem \ref{maintheorem4}}\label{Sec5}

In this section, we aim to derive that Eq. \eqref{mainequation2}
admits a mountain-pass type solution whose energy is equal to the mountain-pass level.

The main result in this direction can be stated as follows.

\begin{theorem}\label{mpsolution}
Let $1<p<2$ and suppose $(A_1)-(A_2)$. If $f$ satisfies \eqref{definition} and $(f_1)-(f'_2)$ as well as $(f_3)-(f_4)$, then
Eq. \eqref{mainequation2} has a positive mountain-pass type solution in $u\in E$ with $J_\infty(u)=c_ \infty$,
where $J_\infty$ and $c_\infty$ are defined by \eqref{aafunctional} and \eqref{sequence2}, respectively.
\end{theorem}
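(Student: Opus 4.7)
\textbf{Proof proposal for Theorem \ref{mpsolution}.} The plan is to mimic the structure of the proof of Theorem \ref{nonautonomous1}: construct a $(C)_{c_\infty}$-sequence by mountain-pass, show boundedness, localize via the Vanishing lemma, extract a nontrivial weak limit, and identify it as a critical point at the level $c_\infty$. The new difficulties are the loss of the monotonicity $(f_2)$, which disables the Nehari identification $c_\infty = m_\infty = d_\infty$ of Lemma \ref{unique}, and the weaker condition $(f_4)$ (in place of $(f'_4)$), which prevents a brute-force $L^s$-control of the nonlinear term.

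First I would observe that the mountain-pass geometry of $J_\infty$ (analogs of Lemmas \ref{mountainpass1} and \ref{mountainpass2}) depends only on the growth estimates \eqref{growth1}--\eqref{growth2} together with $(f_4)$, and therefore carries over without change; applying Proposition \ref{mp} produces a sequence $\{u_n\}\subset E$ satisfying \eqref{sequence1} at the level $c_\infty>0$. Next I would use $(f'_2)$ to obtain boundedness:
\[
c_\infty+o_n(1)=J_\infty(u_n)-\tfrac{1}{6}J'_\infty(u_n)[u_n]\geq \tfrac{1}{3}|\nabla u_n|_2^2+\bigl(\tfrac{1}{p}-\tfrac{1}{6}\bigr)a_\infty|u_n|_p^p+\tfrac{1}{6}\int_{\R^2}\bigl[f(u_n)u_n-6F(u_n)\bigr]\mathrm{d}x,
\]
where the last integral is nonnegative by $(f'_2)$; in particular $\|u_n\|$ is bounded and \eqref{compact1} holds. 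The Moser-sequence computation of Lemma \ref{estimate} uses only $(f_3)$--$(f_4)$ plus the growth estimates, so $c_\infty<2\pi/\alpha_0$ remains valid.

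With the bound $\limsup|\nabla u_n|_2^2<4\pi/\alpha_0$ (from Step 2 and the level estimate) in hand, I would rule out vanishing exactly as in the proof of Theorem \ref{nonautonomous1}: if Theorem \ref{Vanishing} applies then $u_n\to 0$ in $L^s(\R^2)$ for every $s\in(p,+\infty)$, and a H\"older splitting $|u_n|^q\Phi_{\alpha,j_0}(u_n)\leq |u_n|^{q\nu'}\cdot \Phi_{\alpha\nu,j_0}(u_n)$ combined with Theorem \ref{maintheorem1} forces $\int f(u_n)u_n\to 0$ and $\int F(u_n)\to 0$, hence $c_\infty=0$, a contradiction. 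Therefore there exist $y_n\in\R^2$ and $r,\tau>0$ with $\int_{B_r(y_n)}|u_n|^p\geq\tau$, and since $J_\infty$ is translation invariant, the translates $v_n(\cdot)=u_n(\cdot+y_n)$ form a new $(C)_{c_\infty}$-sequence with a nonzero weak limit $v\in E$. Lemmas \ref{BLCS} and \ref{compact} then promote $v$ to a critical point: $J'_\infty(v)=0$.

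The main obstacle is the final step, $J_\infty(v)=c_\infty$. The Fatou-with-$(f'_2)$ computation (identical to \eqref{equality}) gives the easy inequality $c_\infty\geq J_\infty(v)$, since the integrand $f(u_n)u_n-6F(u_n)$ is nonnegative. For the reverse inequality the Nehari route used in Theorem \ref{nonautonomous1} is blocked: without $(f_2)$ one cannot guarantee that $t=1$ is the maximizer of $t\mapsto J_\infty(tv)$, and the $\tau(s,t)$ computation of Lemma \ref{unique} fails to be sign-definite under the weaker $(f'_2)$. My plan is to close the gap by proving strong convergence $v_n\to v$ in $E$. Writing $w_n=v_n-v$, I would combine the standard Br\'ezis--Lieb lemma for $|\nabla\cdot|_2^2$ and $|\cdot|_p^p$, the Chern--Simons splitting \eqref{BLCS2}, and a Br\'ezis--Lieb identity for $\int F(u_n)$ (which is justified by \eqref{compact1} and the subcritical estimate $\limsup|\nabla v_n|_2^2<4\pi/\alpha_0$), to obtain
\[
J_\infty(v_n)=J_\infty(v)+J_\infty(w_n)+o_n(1),\qquad J'_\infty(v_n)[v_n]=J'_\infty(w_n)[w_n]+o_n(1).
\]
Thus $\{w_n\}$ is a bounded $(C)_{c^\star}$-type sequence with $c^\star=c_\infty-J_\infty(v)\geq 0$ and $J'_\infty(w_n)[w_n]\to 0$. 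Since $w_n\rightharpoonup 0$, a second application of Theorem \ref{Vanishing} together with the bound $c^\star<2\pi/\alpha_0$ and the critical growth analysis shows $\int f(w_n)w_n\to 0$ and $\int F(w_n)\to 0$, whence $\|w_n\|\to 0$ and consequently $c^\star=0$. This yields $J_\infty(v)=c_\infty$ and $v_n\to v$ in $E$. The positivity of $v$ follows from $(f_1)$ (so $f\equiv 0$ on $(-\infty,0]$), testing $J'_\infty(v)$ against $v^-$ to obtain $v\geq 0$, and then elliptic regularity together with the strong maximum principle applied to the zero-mass equation \eqref{mainequation2}. The hardest point, and the one deserving the most care, is the Br\'ezis--Lieb splitting for the exponential nonlinearity in the critical regime, which is where the sharpness of Theorem \ref{maintheorem1} at $\alpha=4\pi$ is indispensable.
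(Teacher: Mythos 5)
Your overall scaffolding (mountain-pass geometry, boundedness via $(f'_2)$, the level estimate $c_\infty<2\pi/\alpha_0$, ruling out vanishing of the original sequence) is sound, but the final and decisive step has a genuine gap. After translating to get $v_n\rightharpoonup v\neq0$ with $J'_\infty(v)=0$, you set $w_n=v_n-v$ and assert that ``a second application of Theorem \ref{Vanishing} \dots shows $\int f(w_n)w_n\to0$''. Theorem \ref{Vanishing} only applies if $\limsup_n\sup_{y\in\R^2}\int_{B_r(y)}|w_n|^p\,dx=0$, and weak convergence $w_n\rightharpoonup0$ does not give this; you never verify it. If $w_n$ does not vanish, you must translate again and extract a second nontrivial profile $v^{(2)}$ with $J'_\infty(v^{(2)})=0$; iterating, you only obtain $c_\infty\geq\sum_k J_\infty(v^{(k)})$ with each summand merely nonnegative under $(f'_2)$, so you cannot conclude $J_\infty(v)=c_\infty$, nor even single out which profile carries the level. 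In the proof of Theorem \ref{nonautonomous1} this loop is closed precisely by $(f_2)$ through Lemma \ref{unique}: there $c_\infty=m_\infty$, so any nontrivial critical point has energy at least $c_\infty$ and Fatou forces equality. Under the weaker $(f'_2)$ that mechanism is unavailable, which is exactly where your route stalls. A secondary concern is the global Br\'ezis--Lieb splitting $\int_{\R^2}F(v_n)\,dx=\int_{\R^2}F(v)\,dx+\int_{\R^2}F(w_n)\,dx+o_n(1)$ for the critical exponential nonlinearity, which you flag as delicate but do not prove; Lemma \ref{compact} only yields convergence on compact sets.

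The paper avoids all of this by abandoning translations altogether and working in the radially symmetric subspace $E_r$, where the embedding $E_r\hookrightarrow L^s(\R^2)$ is compact for $s>p$. Then $u_n\to u_0$ strongly in $L^s(\R^2)$, the term $\int_{\R^2} f(u_n)(u_n-u_0)\,dx$ is killed by a H\"older splitting together with Theorem \ref{maintheorem1}, using the bound $\limsup_n|\nabla u_n-\nabla u_0|_2^2<4\pi/\alpha_0$ obtained from $J_\infty(u_0)\geq0$ and $c_\infty<2\pi/\alpha_0$, and Simon's inequality upgrades this to $u_n\to u_0$ in $E_r$; the $(C)_{c_\infty}$ condition then gives $J_\infty(u_0)=c_\infty$ directly, with no need to identify $c_\infty$ with a Nehari level. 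If you wish to keep your non-radial route you would need an extra ingredient, for instance a uniform positive lower bound on the energy of nontrivial critical points together with an argument excluding more than one profile at the mountain-pass level, neither of which follows from $(f'_2)$ alone.
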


As we have pointed out in the Introduction,
when $(f_2)$ is absence, we cannot restore the compactness as what we have done in the Sections \ref{Sec3}
and \ref{Sec4}. Speaking it clearly, let $\{u_n\}\subset E$
be a $(C)_{c_\infty}$ sequence of $J_\infty$, it is impossible to conclude that $\{u_n\}$
admits a strongly convergent subsequence in $E$ by \eqref{equality}.
The existence of such a sequence is guaranteed by adopting some very similar calculations in Section \ref{Sec3}.

Whereas, since the whole space $\R^2$ itself also results in the lack of compactness,
  we shall always restrict ourselves in the radially
symmetric subspace of $E$. In other words,
in this section, we prefer to take advantage of $E_r$ to be the work space, instead of $E$.

Now, we are able to verify that the variational functional $J_\infty$ satisfies the $(C)$ condition at the level $c_\infty$.

\begin{lemma}\label{Cccondition}
Under the assumptions of Theorem \ref{mpsolution},
if $\{u_n\}\subset E_r$ is a $(C)_{c_\infty}$ sequence of $J_\infty$, then
there is a $u_0\in E$ such that $u_n\to u_0$ in $E_r$ along a subsequence.
\end{lemma}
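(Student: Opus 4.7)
The plan is to carry out a four-step program tailored to the $E_r$ setting: (i) boundedness of the Cerami sequence via the weak condition $(f'_2)$; (ii) extraction of a weak limit $u_0$ using reflexivity together with the global compact embedding $E_r\hookrightarrow L^s(\R^2)$ for $p<s<+\infty$; (iii) identification of $u_0$ as a critical point of $J_\infty$ via Lemma \ref{compact} and Lemma \ref{BLCS}; and (iv) upgrade to strong convergence through a Brezis--Lieb splitting whose critical exponential remainder is controlled by Strauss's radial decay.

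For (i), combining $J_\infty(u_n)\to c_\infty$ with $J'_\infty(u_n)[u_n]=o_n(1)$ and using $(f'_2)$ to render $f(u_n)u_n-6F(u_n)\geq 0$ delivers
\[
c_\infty+o_n(1)=\tfrac{1}{3}|\nabla u_n|_2^2+\Big(\tfrac{1}{p}-\tfrac{1}{6}\Big)a_\infty|u_n|_p^p+\tfrac{1}{6}\int_{\R^2}\bigl(f(u_n)u_n-6F(u_n)\bigr)\,dx,
\]
hence $\|u_n\|$ is bounded and a calculation as in Lemma \ref{bounded} gives $\sup_n\int f(u_n)u_n\,dx<+\infty$. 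Step (ii) then follows by passing to a subsequence with $u_n\rightharpoonup u_0$ in $E_r$, $u_n\to u_0$ in $L^s(\R^2)$ for every $p<s<+\infty$, and $u_n\to u_0$ a.e. For (iii) I would test $J'_\infty(u_n)[\phi]=o_n(1)$ against $\phi\in C_0^\infty(\R^2)$ and pass to the limit via Lemma \ref{compact} for the nonlinear term, Lemma \ref{BLCS} for the Chern--Simons pieces, and weak convergence for the remaining linear and $L^p$ terms; this produces $J'_\infty(u_0)=0$.

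Step (iv) is the heart of the matter. Setting $v_n=u_n-u_0$, the classical Brezis--Lieb identity for $|\nabla\cdot|_2^2$ and $|\cdot|_p^p$, the splitting \eqref{BLCS2} for the Chern--Simons term, and a Brezis--Lieb type identity for $\int F$ under critical exponential growth produce
\[
J_\infty(v_n)\to c_\infty-J_\infty(u_0),\qquad J'_\infty(v_n)[v_n]=o_n(1).
\]
Since $J'_\infty(v_n)[v_n]$ is the nonnegative quantity $|\nabla v_n|_2^2+3\int(A_1^2[v_n]+A_2^2[v_n])v_n^2\,dx+a_\infty|v_n|_p^p$ minus $\int f(v_n)v_n\,dx$, proving $v_n\to 0$ in $E_r$ reduces to showing $\int f(v_n)v_n\,dx\to 0$ (and analogously $\int F(v_n)\,dx\to 0$).

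The main obstacle is precisely this last limit: a direct Hölder--Trudinger--Moser estimate on $\int|v_n|^q\Phi_{\alpha,j_0}(v_n)\,dx$ through Theorem \ref{maintheorem1} would require the subcritical constraint $\alpha\nu\|v_n\|^2<4\pi$, which the a priori bound $\|v_n\|^2\lesssim c_\infty+c_\infty^{2/p}$ does not guarantee when $c_\infty$ is close to $2\pi/\alpha_0$. My plan is to bypass this by splitting the integral over $B_R(0)$ and $\R^2\setminus B_R(0)$ and exploiting the radial structure. Outside $B_R(0)$ the Strauss estimate \eqref{Strauss1} gives $|v_n(x)|\leq C|x|^{-2/p}$ uniformly in $n$, so for $R$ large the exponential factor is subcritical pointwise and one has $|v_n|^q\Phi_{\alpha,j_0}(v_n)\lesssim|v_n|^{q+2j_0}\leq C|x|^{-2(q+2j_0)/p}$; the tail is integrable on $\R^2\setminus B_R(0)$ and vanishes as $R\to+\infty$ because $q+2j_0>p$. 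On the fixed ball $B_R(0)$ the pointwise convergence $v_n\to 0$ together with uniform integrability, supplied by Hölder combined with the local Trudinger--Moser inequality valid for the bounded $H^1(B_R)$ sequence $\{v_n\}$ with $\nu\alpha$ chosen sufficiently small, lets Vitali's theorem close the argument. With $\int f(v_n)v_n\,dx\to 0$ in hand, each of the three nonnegative pieces of $J'_\infty(v_n)[v_n]$ must vanish, giving $v_n\to 0$ in $E_r$ and proving the Cerami condition.
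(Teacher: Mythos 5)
Your steps (i)--(iii) match the paper. The divergence, and the gap, is in step (iv). You correctly identify the central obstruction --- that the a priori bound on $\|u_n-u_0\|$ coming from boundedness of the Cerami sequence is not strong enough to make a Trudinger--Moser estimate with $\alpha>\alpha_0$ subcritical --- but the workaround you propose does not close it. The paper's resolution is an \emph{energy expansion}: writing $c_\infty=J_\infty(u_n)+o_n(1)$, applying the Br\'ezis--Lieb splittings (including \eqref{BLCS2} and \eqref{compact2}) and discarding the nonnegative remainder terms, one gets
\[
c_\infty\geq \tfrac12|\nabla u_n-\nabla u_0|_2^2+J_\infty(u_0)+o_n(1)\geq \tfrac12|\nabla u_n-\nabla u_0|_2^2+o_n(1),
\]
where $J_\infty(u_0)\geq 0$ follows from $J_\infty'(u_0)=0$ and $(f_2')$. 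Combined with Lemma \ref{estimate} this yields $\limsup_n|\nabla(u_n-u_0)|_2^2\leq 2c_\infty<4\pi/\alpha_0$, which is exactly the subcriticality needed; the paper then bounds $\int_{\R^2}\Phi_{\alpha\nu,j_0}(u_n)\,dx$ uniformly by splitting $u_n=(u_n-u_0)+u_0$ with two Young-type inequalities, shows $\int f(u_n)(u_n-u_0)\,dx\to0$ by H\"older and the compact embedding of $E_r$, and concludes via Simon's monotonicity inequality applied to $J_\infty'(u_n)[u_n-u_0]-J_\infty'(u_0)[u_n-u_0]=o_n(1)$. This factor $\tfrac12$ (from the full energy) rather than $\tfrac13$ (from $J_\infty-\tfrac16 J_\infty'$) is the whole point.

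Your substitute argument fails at the ball $B_R(0)$. The growth condition \eqref{growth1} forces $\alpha>\alpha_0$, so $\nu\alpha$ cannot be ``chosen sufficiently small''; uniform integrability of $f(v_n)v_n$ on $B_R$ via the local Trudinger--Moser inequality then requires $\nu\alpha\limsup_n|\nabla v_n|_{2}^2<4\pi$, i.e.\ essentially $\limsup_n|\nabla v_n|_2^2<4\pi/\alpha_0$. Without the energy expansion, the only available bound is $|\nabla v_n|_2^2=|\nabla u_n|_2^2-|\nabla u_0|_2^2+o_n(1)\leq 3c_\infty+o_n(1)$, and $3c_\infty$ may lie in $[4\pi/\alpha_0,\,6\pi/\alpha_0)$, so Vitali's theorem is not justified there. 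Two further weak points: the identity $J_\infty'(v_n)[v_n]=o_n(1)$ requires a Br\'ezis--Lieb splitting of $\int f(u_n)u_n$ under critical exponential growth, which itself needs the same subcritical control you are missing (the paper sidesteps this by testing $J_\infty'(u_n)$ against $u_n-u_0$ instead of working with $J_\infty'(v_n)[v_n]$); and the Strauss decay \eqref{Strauss1} as quoted from \cite{Berestycki1} is stated for radial \emph{non-increasing} functions, so applying it uniformly to $v_n=u_n-u_0$ needs justification. Inserting the energy-expansion step would repair the argument and essentially reduce it to the paper's proof.
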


\begin{proof}
Proceeding as the proof of  Lemma \ref{bounded}, $\{\|u_n\|\}$ is uniformly bounded in $n\in \mathbb{N}$.
Passing to a subsequence if necessary,
there is a
  $u_0\in E_r$ such that $u_n\rightharpoonup u_0$ in $E_r$,
$u_n\to u_0$ in $L^s(\R^N)$ with $s>p$ and $u_n\to u_0$ a.e. in $\R^N$.
Combining \eqref{BLCS1} and \eqref{compact3}, one has $J^\prime_\infty(u_0)=0$  which implies that
\begin{equation}\label{Cc1}
J_\infty(u_0)=J_\infty(u_0)-\frac{1}{6}J'_\infty(u_0)[u_0]\geq0.
\end{equation}
Moreover, it follows from the Br\'{e}zis-Lieb lemma, \eqref{BLCS2}, \eqref{compact2} and \eqref{Cc1} that
\begin{align*}
  c_\infty & =\frac12|\nabla u_n|_2^2+\frac{a_\infty}p| u_n|_p^p +\frac12\int_{\R^2}(A_1^2[u_n]+A_2^2[u_n])u_n^2dx
-\int_{\R^2}F(u_n)dx+o_n(1)\\
    & =\frac12|\nabla u_n-\nabla u_0|_2^2+\frac{a_\infty}p| u_n-u_0|_p^p +\frac12\int_{\R^2}(A_1^2[u_n-u_0]+A_2^2[u_n-u_0])(u_n-u_0)^2dx\\
&\ \ \ \  +\frac12|\nabla u_0|_2^2+\frac{a_\infty}p| u_0|_p^p +\frac12\int_{\R^2}(A_1^2[u_0]+A_2^2[u_0])u_0^2dx
-\int_{\R^2}F(u_0)dx+o_n(1)\\
&\geq \frac12|\nabla u_n-\nabla u_0|_2^2+J_\infty(u_0)+o_n(1)\geq \frac12|\nabla u_n-\nabla u_0|_2^2 +o_n(1).
\end{align*}
From which and Lemma \ref{estimate}, then $\limsup\limits_{m\to\infty}|\nabla u_n-\nabla u_0|_2^2<\frac{4\pi}{\alpha_0}$.
Consequently, we shall choose $\alpha>\alpha_0$ sufficiently close to $\alpha_0$ and $\nu> 1$ sufficiently close to 1 in such
a way that $\frac1\nu + \frac1{\nu'}= 1$ with $\nu'>1$ and
$$
\alpha\nu|\nabla u_n-\nabla u_0|_2^2<4\pi(1-\hat{\epsilon})~\text{for some suitable}~\epsilon\in(0,1).
$$
We define
$$
\hat{u}_n=\sqrt{\frac{\alpha\nu}{4\pi(1-\hat{\epsilon})}}( u_n-u_0),~\forall n\in \mathbb{N}.
$$
Obviously, one sees that $|\nabla \hat{u}_n|_2^2\leq 1$ for all sufficiently
$n\in \mathbb{N}$ and $|\hat{u}_n|_p^p$ is uniformly bounded in $n\in \mathbb{N}$.
Besides, for the above fixed $\hat{\epsilon}\in(0,1)$, we need the following two types of Young's inequality
\[
 |a+b|^2\leq(1+\hat{\epsilon})|a|^2+(1+\hat{\epsilon}^{-1})|b|^2,~\forall a,b\in\R
\]
and
\[
e^{a+b}-d\leq \frac{1}{1+\hat{\epsilon}}\big[e^{(1+\hat{\epsilon})a}-d\big]
+\frac{\hat{\epsilon}}{1+\hat{\epsilon}}\big[e^{(1+\hat{\epsilon}^{-1})b}-d\big],~\forall a,b,d\in\R.
\]
By means of the above facts together with \eqref{maintheorem1a}, we derive
 \begin{align*}
\int_{\R^2}\Phi_{\alpha\nu,j_0}(u_n)dx &\leq\frac{1}{1+\bar{\epsilon}}
\int_{\R^2} \Phi_{4\pi(1+\bar{\epsilon})^{-2},j_0} (\hat{u}_n)  dx
+\frac{\bar{\epsilon}}{1+\bar{\epsilon}}\int_{\R^2} \Phi_{\nu\alpha(1+\bar{\epsilon}^{-1})^2,j_0} (u_0) dx \\
  & \leq \frac{\mathbb{S}(4\pi)}{1+\bar{\epsilon}}+\frac{C_2\bar{\epsilon}}{1+\bar{\epsilon}}\leq C_3 <+\infty,~\forall n\in \mathbb{N}^+.
\end{align*}
As a consequence, by \eqref{growth1}, we obtain
\begin{align}\label{Cc2}
 \nonumber \bigg|\int_{\R^2}f(u_n)(u_n-u_0)dx\bigg| & \leq \bigg(\int_{\R^2}|u_n|^2dx\bigg)^{\frac12}
\bigg(\int_{\R^2}|u_n-u_0|^2dx\bigg)^{\frac{1}{2}}\\
\nonumber   & \ \ \ \   + C  |u_n|_{2(q-1)\nu^\prime}  ^{(q-1)}
 |u_n-u_0|_{2\nu^\prime}
\bigg(\int_{\R^2}\Phi_{\alpha\nu,j_0}(u_n)dx\bigg)^{\frac{1}{\nu}}\\
&=o_n(1).
\end{align}
It is simple to see that
\begin{equation}\label{Cc3}
\int_{\R^2}f(u_0)(u_n-u_0)dx=o_n(1).
\end{equation}
For all $1<r<2$, thanks to the significant inequality \cite[(2.2)]{Simon} which can be stated as follows
\[
(|y_2|^{r-2}y_2-|y_1|^{r-2}y_1)\cdot(y_2-y_1)\geq \hat{C}_r\cdot
 \frac{|y_2-y_1|^2}{(|y_2|+|y_1|)^{2-r}}.
\]
From which, using $J_\infty^\prime(u_n)=o_n(1)$ and $J _\infty^\prime(u_0)=0$ as well as \eqref{BLCS1} and \eqref{Cc2}-\eqref{Cc3}, it holds that
\begin{align*}
  o_n(1) &= J_\infty ^\prime(u_n)[u_n-u_0]-  J _\infty^\prime(u_0)[u_n-u_0] \\
   & = \int_{\R^2}\big[ |\nabla u_n-\nabla u_0 |^{2}+
(| u_n|^{p-2} u_n-|u _0 |^{p-2} u_0)( u_n- u_0)\big]dx\\
&\ \ \ \  +\int_{\R^2}(A_1^2[u_n]u_n+A_2^2[u_n]u_n)(u_n-u)dx  +\int_{\R^2}(A_1^2[u_0]u_0+A_2^2[u_0]u_0)(u_n-u_0)dx \\
&\ \ \ \  +\int_{\R^2}f(u_n)(u_n-u_0)dx
+\int_{\R^2}f(u_0)(u_n-u_0)dx\\
&=\int_{\R^2}\big[ |\nabla u_n-\nabla u_0 |^{2}+
(| u_n|^{p-2} u_n-|u _0 |^{p-2} u_0)( u_n- u_0)\big]dx
+o_n(1)\\
&\geq o_n(1)
\end{align*}
yielding that
\[
 |\nabla u_n-\nabla u_0 |^{2}_2=o_n(1)~\text{and}~
\int_{\R^2}
(| u_n|^{p-2} u_n-|u _0 |^{p-2} u_0)( u_n- u_0) dx.
\]
At this stage, we apply the H\"{o}lder's inequality to get
\begin{align*}
 \int_{\R^2}|u_n-u_0|^pdx &\leq \hat{C}_p^{-\frac{p}{2}}\int_{\R^2}|(|u_n|^{p-2}u_n-|u _0|^{p-2}u _0)(u_n-u_0)|^{\frac{p}{2}}(|u_n|+|u_0|)^{\frac{p(2-p)}{2}}dx \\
  & \leq \hat{C}_p^{-\frac{p}{2}}\bigg(\int_{\R^2}|(|u_n|^{p-2}u_n-|u _0|^{p-2}u_0 )(u_n-u_0)|dx\bigg)^{{\frac{p}{2}}}
\bigg( \int_{\R^2}(|u_n|+|u_0|)^{p}dx
\bigg)^{{\frac{2-p}{2}}}\\
&\leq C\bigg(\int_{\R^2}|(|u_n|^{p-2}u_n-|u _0|^{p-2}u_0 )(u_n-u_0)|dx\bigg)^{{\frac{p}{2}}}=o_n(1).
\end{align*}
Thus, we can derive that $u_n\to u_0$ in $E_r$ as $n\to\infty$.
The proof is completed.
\end{proof}

\begin{proof}[\emph{\textbf{Proof of Theorem \ref{mpsolution}}}]
In view of Section \ref{Sec3}, there is a $(C)_{c_\infty}$ sequence $\{u_n\}\subset E_r$
 of $J_\infty$. So, we can finish the proof by Lemma \ref{Cccondition}.
This proof also concludes Theorem \ref{maintheorem4}.
\end{proof}

\bigskip

\end{document}